  \theoremstyle{plain}
    \newtheorem{thm}{Theorem}[section]
    \newtheorem{prop}[thm]{Proposition}
   \newtheorem{lemma}[thm]{Lemma}
    \newtheorem{corollary}[thm]{Corollary}
    \newtheorem{subsec}[thm]{}
\theoremstyle{definition}
    \newtheorem{defn}[thm]{Definition}
    \newtheorem{exam}[thm]{Example}
\theoremstyle{remark}
     \newtheorem{remark}[thm]{Remark}
\title{}
\author{}
\date{}
\begin{document}
\title{Modular Class of a Lie algebroid with a Nambu Structure}
\author{Apurba Das}
\email{ apurbadas348@gmail.com}
\address{Stat-Math Unit,
 Indian Statistical Institute, Kolkata 700108,
West Bengal, India.}

\author{Shilpa Gondhali}
\email{shilpa.s.gondhali@gmail.com}
\address{Department of Mathematics,
 University of Haifa, Israel.}

\author{Goutam Mukherjee}
\email{gmukherjee.isi@gmail.com}
\address{Stat-Math Unit,
 Indian Statistical Institute, Kolkata 700108,
West Bengal, India.}

\date{\today}
\subjclass[2010]{ 53Cxx, 53C15, 53D17, 81S10}

\thispagestyle{empty}

\begin{abstract}
In this paper, we introduce the notion of modular class of a Lie algebroid $A$ equipped with a  Nambu structure satisfying some suitable hypothesis. We also introduce cohomology and homology theories for such Lie algebroids and prove that these theories are connected by a duality isomorphism when the modular class is null.

\end{abstract}
\maketitle
\section{Introduction}
For a Poisson manifold $P$, it is well known that the cotangent bundle $T^\ast P$ has a natural Lie algebroid structure (see \cite{vais} for details). It is well known that for a Poisson manifold $P$ there is a distinguished characteristic class, called the modular class of $P$ which is defined as a first Lie algebroid cohomology class of the Lie algebroid $T^\ast P$. Nambu-Poisson manifolds are generalization of Poisson manifolds and just like Poisson manifolds, the notion of modular class of a Nambu-Poisson manifold exists. In \cite {ilmp}, the authors proved that for an oriented Nambu-Poisson manifold $M$ of order $n,~ n\geq 3$, the vector bundle $\Lambda ^{n-1}(T^*M)$ is a Leibniz algebroid and as a result the space $\Omega^{n-1}(M)$ of smooth sections of the bundle  $\Lambda ^{n-1}(T^*M)$ is a Leibniz algebra (a non anti-symmetric version of Lie algebra). The modular class of $M$ is then defined as an element of the first Leibniz algebroid cohomology of $\Lambda ^{n-1}(T^*M)$ which is by definition, the first cohomology class of the Leibniz algebra  $\Omega^{n-1}(M)$ with coefficients in $C^{\infty}(M).$ In \cite{illmp}, the authors introduced homology and cohomology theories for Nambu-Poisson manifolds and when the modular class vanishes, the authors proved a duality theorem connecting  Nambu cohomology and canonical Nambu homology modules. 

The notion of a Nambu structure on a Lie algebroid was introduced in \cite {wade} as a generalization of the notion of a Nambu-Poisson manifold. It turns out that for a Lie algebroid $A$ over a smooth manifold $M$ equipped with a Nambu structure of order $n$, the vector bundle $\Lambda^{n-1}A^\ast $ is a Leibniz algebroid. The aim of this article is to generalize the methods developed in \cite {illmp}, \cite{ilmp}, in the context of Lie algebroid equipped with a Nambu structure. To do this, we introduce modular class  of a Lie algebroid $A$ equipped with a Nambu structure of order $n$ as an element of the first Leibniz algebroid cohomology of the associated Leibniz algebroid $\Lambda^{n-1} A^\ast$, under the assumption that the space of smooth sections of $A^\ast$ is locally spanned by elements of the form $d_Af,~~ f\in C^\infty(M)$, where $d_A$ denotes the coboundary operator in the Lie algebroid cohomology complex of $A$ with trivial coefficients.  Moreover, we introduce Nambu cohomology and  Nambu homology theories, generalizing the corresponding results for Nambu-Poisson manifolds, in the context of Lie algebroids equipped with regular Nambu structures and prove that if the modular class is null then these theories are connected by  duality isomorphisms. It may be mentioned that in \cite {elw}, the authors introduced a notion of modular class of an arbitrary Lie algebroid which arise from a representation on a specific line bundle, extending the notion of the modular class of Poisson manifolds.

The paper is organized as follows. In $\S 2$, we review some known facts and fix our notation. In $\S3$, we define the notion of modular class of a Lie algebroid which is orientable as a vector bundle and equipped with a Nambu structure. In $\S4$, we first introduce Nambu cohomology for a Lie algebroid $A$ equipped with a regular Nambu structure and give an equivalent formulation of this cohomology which is a variant of the notion of foliated cohomology \cite{vais71, vais73}. Next, under the assumption that $A$ is oriented as a vector bundle, we introduce the notion of canonical Nambu homology theory of $A$. Finally, in this section we connect these theories by proving a duality isomorphism theorem under the assumption that the modular class of $A$ is null. In $\S5$, the last section of the paper, we show that we can do away with the orientability assumption on the vector bundle $A$ and introduce modular class and recover the results of $\S 4$ by using the notion of density on $A$.
\section{Preliminaries}\label{prel}
In this section we briefly recall some known facts and fix notations (see \cite{mac} for details). All vector bundles we consider are smooth vector bundles over smooth paracompact manifolds. The notion of Lie algebroids was introduced by J. Pradines \cite{pr} as generalization of tangent bundles of smooth manifolds and Lie algebras.

\begin{defn} \label{lie-d} 
Let $M$ be a smooth manifold. A {\it{Lie algebroid}} $(A,[,]_A,a)$ over $M$ is a vector bundle $A$ over $M$ together with a
vector bundle map $a \colon A \longrightarrow TM$, called {\it{anchor}} of $A$ and a
bilinear map $[,]_A\colon \Gamma A \times \Gamma A \longrightarrow \Gamma A$ on the space  $\Gamma A$ of smooth sections of $A$, which makes $\Gamma A$ a Lie algebra such
that for all $X, Y \in \Gamma A$ and $f\in C^\infty(M)$ following holds:
$$[X,fY]_A= f[X,Y]_A+ a(X)(f)Y.$$
\end{defn}

Recall that the $ C^\infty(M)$-linear map $\Gamma A \longrightarrow \chi (M)$ induced by $a$, where $\chi (M)$ denote the space of vector fields on $M,$ is a Lie algebra homomorphism, that is,
$$a([X,Y]_A)=a(X)a(Y)-a(Y)a(X)=[a(X),a(Y)].$$

A representation $\rho \colon A \longrightarrow \mathcal{D}(E)$ of a Lie algebroid $A$ over $M$ on a vector bundle $E$ over $M$ is a Lie algebroid morphism, where $\mathcal{D}(E)$ is the Lie algebroid of derivations on $E$. In particular, the representation  of $A$ on the trivial bundle $M\times\mathbb{R}\longrightarrow M$
given by $$a^0: (X,f) \mapsto  a(X)(f)$$ for all $f\in C^\infty(M)= \Gamma(M\times \mathbb{R})$ and
$X\in\Gamma A$ is called the {\it{trivial representation}}.

We now briefly recall the definition of cohomology of a Lie algebroid with coefficients in a given representation. Given a representation $\rho$ on a vector bundle $E$ over $M$ of a Lie algebroid $A$ over $M$, we have a sequence of $C^\infty(M)$- modules $\Gamma (\Lambda^kA^\ast\otimes E), ~~k\geq 0,$ and $\mathbb R$-linear maps
$$d_A \colon \Gamma (\Lambda^kA^\ast\otimes E) \longrightarrow \Gamma (\Lambda^{k+1}A^\ast\otimes E)$$ which forms a cochain complex. The homology of this complex defines the Lie algebroid cohomology of $A$ with coefficients in $\rho$. Note that an element of $\Gamma (\Lambda^kA^\ast\otimes E)$ may be viewed as a $C^{\infty}(M)$-multilinear and alternating map
$$\phi\colon \underbrace{\Gamma A\times\cdots\times \Gamma A}_{k\hspace*{1mm}\text{times}} \longrightarrow \Gamma E$$ and then $d_A\phi$ is given by the formula
\begin{multline*}
 d_A\phi(X_1,\cdots, X_{k+1})=
\displaystyle{\sum_{r=1}^{n+1}}(-1)^{r+1}\rho(X_r)(\phi(X_1,\cdots,X_{r-1},\widehat{X_r},X_{r+1},\cdots,X_{k+1}))\\+
\displaystyle{\sum_{1\leq r<s\leq n+1}}(-1)^{r+s}\phi([X_r,X_s]_A,X_1,\cdots,\widehat{X_r},\cdots,\widehat{X_s},\cdots,X_{k+1}).
\end{multline*}
\noindent for all $X_1,\cdots,X_{k+1}\in \Gamma A$. 
The $k^{th}$-cohomology of $A$ with coefficients in $\rho$ is denoted by $\mathcal{H}^k(A,\rho)$. In particular, when $E= M\times \mathbb R$ and $\rho = a^0$, we denote the above cochain complex simply by $\{\Gamma (\Lambda^k A^\ast), d_A\}_{k\geq 0}$ and the corresponding cohomology is denoted by $\mathcal{H}^\bullet(A).$

Recall that we have two useful operators defined as follows \cite{mac}

\begin{defn}\label{lie-L-i}
Let $M$ be a smooth manifold. Let $(A,[,]_A,a)$ be a Lie algebroid over $M$.\\
(1) Let $X\in \Gamma A$. The {\it{Lie derivative}}
$\mathcal{L}_X\colon \Gamma(\Lambda^nA^\ast)\longrightarrow  \Gamma(\Lambda^nA^\ast)$ is defined by
\[
 \mathcal{L}_X(\phi)(X_1,\cdots, X_n)= a(X)(\phi(X_1,\cdots,X_n))
-\displaystyle{\sum_{r=1}^n}\phi(X_1,\cdots,[X,X_r]_A,\cdots,X_n)
\]
\noindent where $\phi\in \Gamma(\Lambda^n A^\ast), X_r\in \Gamma A$ for $r=1,\cdots,n$.\\
\noindent (2) Let $X\in \Gamma A$.
The {\it{interior multiplication }}, also known as {\it{contraction}}
$$\iota_X\colon  \Gamma(\Lambda^{n+1} A^\ast)\longrightarrow \Gamma(\Lambda^n A^\ast)$$ is defined by
\[
 \iota_X(\phi)(X_1,\cdots,X_n)= \phi(X,X_1,\cdots,X_n)
\]
\noindent where $n\in\mathbb{N}, \phi\in  \Gamma(\Lambda^{n+1} A^\ast)$ and $X_1,\cdots,X_n\in \Gamma A$.\\
\end{defn}

The operators $\mathcal{L}_X$, $\iota_X$ and $d_A$ satisfy a set of formulas similar to those which hold in the calculus of vector-valued forms on manifolds (see \cite{mac} for details). 

Let `$\wedge$' be the standard wedge product of $k$-multisections. For $\phi \in \Gamma (\Lambda^kA^\ast)$ and  a $k$-multisection $\xi \in \Gamma (\Lambda^kA)$, let $\langle \phi, \xi\rangle$ denote the standard pairing. Then the contraction operator extends to give a well defined map $\iota_{\xi} : \Gamma(\Lambda^kA^\ast) \longrightarrow \Gamma (\Lambda^{k-i}A^\ast)$ for $\xi\in \Gamma(\Lambda^i A)$ verifying
\[
 \langle \iota_\xi(\phi),\eta \rangle= (-1)^{[{\frac{i}{2}}]}\langle \phi, \xi\wedge \eta\rangle
\]
for $\xi \in \Gamma (\Lambda^iA),~~ \eta \in \Gamma (\Lambda^j A),~~ \phi \in \Gamma (\Lambda^{i+j} A^\ast) .$

\noindent

Next, given $\phi\in \Gamma A^\ast,$
define $\iota_\phi\colon \Gamma (\Lambda^kA) \longrightarrow \Gamma (\Lambda^{k-1}A)$ for $\xi\in\Gamma (\Lambda^k A)$ and $\psi\in\Gamma (\Lambda^{k-1}A^\ast)$ by
\[
 \langle \psi, \iota_\phi(\xi)\rangle= \langle \phi\wedge\psi, \xi\rangle .
\]
This again extends to a well-defined map $\iota_\phi\colon\Gamma (\Lambda^kA) \longrightarrow\Gamma (\Lambda^{k-i}A)$
for $\phi\in\Gamma (\Lambda^iA^\ast)$. 

For $ \phi \in \Gamma (\Lambda^iA^\ast), ~~\psi\in\Gamma (\Lambda^jA^\ast),~~  \xi\in\Gamma (\Lambda^{i+j}A),$ 
we have
\[
 \langle\psi,\iota_\phi(\xi)\rangle= (-1)^{[\frac{i}{2}]}\langle\phi\wedge\psi,\xi\rangle.
\]
In particular, when $\xi\in \Gamma (\Lambda^k A)$ is of the form $\xi = X_1 \wedge \cdots X_k$ then $\iota_{\xi}$ is simply the compositions of the operators $\iota_{X_i},~~1\leq i \leq k.$ Similar is the case for $\iota _{\phi}$ for $\phi\in \Gamma (\Lambda^iA^\ast).$
Moreover, for $X\in \Gamma A,~~\theta \in \Gamma A^\ast,~~ \phi, ~ \psi \in \Gamma (\Lambda^\bullet A^\ast) $ and $\xi, ~\eta \in \Gamma (\Lambda^\bullet A)$ with $\phi$ and $\xi$ of degree $i$, the following hold:
$$\iota_X(\phi \wedge \psi) = \iota (\phi) \wedge \psi + (-1)^i \phi \wedge \iota_X(\psi),$$
$$\iota _{\theta}(\xi \wedge \eta ) = \iota (\xi ) \wedge \eta + (-1)^i\xi \wedge \iota_{\theta}(\eta).$$

The notion of  Leibniz algebras was introduced by J-L Loday \cite {loday1}, as a non anti-symmetric analogue of Lie algebras. Recall that a left Leibniz bracket on a real vector space $L$ is a $\mathbb R$-bilinear operation $\ll~,~\gg_L : L \times L \longrightarrow L$ such that the following identity, known as Leibniz identity
$$\ll a, \ll b, c\gg_L\gg_L = \ll \ll a, b\gg_L, c\gg_L + \ll b, \ll a, c\gg_L\gg_L$$ hold for all $a,~b,~c \in L$. The vector space $L$ equipped with a left Leibniz bracket is called a left Leibniz algebra. Through out the paper, by a Leibniz algebra, we shall always mean a left Leibniz algebra. Note that the bracket $\ll ~,~\gg_L$ becomes a Lie bracket, if in addition, it is anti-symmetric. Thus every Lie algebra is, in particular, a Leibniz algebra.

Recall \cite{loday2} that a {\it representation of a Leibniz algebra} $L$ is a $\mathbb R$-module $\mathcal E$ equipped with a $\mathbb R $-bilinear map
$$ L \times \mathcal E \longrightarrow \mathcal E,~~ (a, e) \mapsto ae,$$ such that $a_1(a_2e) = \ll a_1, a_2\gg_Le + a_2(a_1e),$ for all $a_1,~a_2 \in L$ and $e \in \mathcal E.$    

Leibniz algebroids are generalizations of Leibniz algebras. 
\begin{defn}\label{leibniz-d}
Let $M$ be a smooth manifold. A {\it{Leibniz algebroid}} over $M$ is a vector bundle $A$ over a smooth manifold $M$ together with a
vector bundle map $a \colon A \longrightarrow TM$ over $M$, called {\it{anchor}} of $A$ and a
$\mathbb{R}$-bilinear map $\ll,\gg_A\colon \Gamma A\times\Gamma A\longrightarrow\Gamma A$ such that
for all $X,Y,Z\in \Gamma A$ and $f\in C^\infty(M)$ following hold.
\begin{enumerate}
\item $\ll X, fY\gg_A=  f\ll X,Y\gg_A + a(X)(f)Y,$
\item $\ll X,\ll Y,Z\gg_A\gg_A= \ll \ll X,Y\gg_A,Z\gg_A+ \ll Y,\ll X,Z\gg_A\gg_A.$
\end{enumerate}
\end{defn}
\noindent
It is a consequence (cf. Proposition $3.1$, \cite{db}) of the Definition~\ref{leibniz-d} that 
$$a(\ll X,Y\gg_A)= [a(X),a(Y)]_{TM}, ~~\mbox{for all}~~X, ~Y \in \Gamma A.$$

We will often use the notation $(A, \ll ~,~\gg_A, a)$ to denote a Leibniz algebroid over a smooth manifold $M$.

\newpage
\begin{exam}\label{leibniz-e}
\noindent
\begin{enumerate}
\item Any Lie algebroid is a Leibniz algebroid.
\item Every  Leibniz algebra may be considered as a Leibniz algebroid over a one point space.
\item Let $M$ be a smooth manifold and set $A= TM\oplus T^\ast M$. Then $A$ is a vector bundle over $M$. Define $\ll ,\gg_A\colon \Gamma A\times\Gamma A\longrightarrow \Gamma A$ as
\[
 \ll X+\xi,Y+\eta\gg_A= [X,Y]_{TM}+ \mathcal{L}_X\eta- \iota_Y(d\xi)
\]
where $X,Y\in \Gamma (TM)= \mathfrak{X}(M)$ and $\xi,\eta\in\Gamma (T^\ast M)= \Omega^1(M)$. Then it is easy to check that $A$ is a Leibniz algebroid with the projection onto the first factor as the anchor.
\end{enumerate}
\end{exam}
The notion of morphism between two Leibniz algebroids over the same base manifold is similar to the definition of morphism between Lie algebroids.  
Just like Lie algebroids, the notion of representation of a Leibniz algebroid may be defined as follows. 
\begin{defn} \label{leibniz-r}
A {\it{representation}} of a Leibniz algebroid $(A, \ll ~,~ \gg_A, a)$ over $M$ on a vector bundle $E$  over $M$  is a Leibniz algebroid morphism 
$$\rho \colon A \longrightarrow  \mathcal{D}(E),$$ where $\mathcal{D}(E)$ is the Lie algebroid of derivations on $E$.
\end{defn}

In other words, a representation of $A$ consists of a $\mathbb{R}$-bilinear map
$$\Gamma A\times \Gamma E\longrightarrow \Gamma E;~~(X,s)\mapsto \rho_X s$$
\noindent
such that for any $X,Y\in \Gamma A,~ f\in C^\infty(M)$ and $s\in \Gamma E$ the following hold:
\begin{enumerate}
\item $\rho_{fX}s=f\rho_X s,$
\item $\rho_X(fs)= f\rho_Xs+ (a(X)f)s,$
\item $\rho_X(\rho_Ys)-\rho_Y(\rho_Xs)= \rho_{\ll X,Y\gg_A}s$.
\end{enumerate}
Note that when considered over a point, the above notion is precisely the notion of the representation of a Leibniz algebra.


\begin{exam}\label{leibniz-r-e}
\noindent
Let $A$ be a Leibniz algebroid on $M$. The representation $a^0$ of $A$ on the trivial line bundle
$$M\times\mathbb{R}\longrightarrow M$$
given by $a^0(X)(f)= a(X)(f)$ for all $f\in C^\infty(M)$ and $X\in\Gamma A$ is called the {\it{trivial
representation}}.
\end{exam}

\begin{defn} \label{leibniz-co}
Let $(A,\ll,\gg_A,a)$ be a Leibniz algebroid over $M$. Let $\rho$ be a representation of $A$ on a vector bundle $E$ over $M$.
Define a sequence of complex as follows.\\
For $n\in\mathbb{N}$,
\[
C^n(A,\rho):= \big\{\phi\colon\underbrace{\Gamma A\times\cdots\times \Gamma A}_{n\hspace*{1mm}\text{times}}\longrightarrow \Gamma E|~~ \phi\hspace*{1mm}\text{is}\hspace*{1mm}
{\mathbb R}\text{-multilinear}\big\}.
\]
\noindent Define coboundary operator
$d_{A,\rho}\colon C^n(A,\rho)\longrightarrow C^{n+1}(A,\rho)$ by the formula
\begin{multline*}
 d_{A,\rho} \phi(X_1,\cdots, X_{n+1})=
\displaystyle{\sum_{r=1}^{n+1}}(-1)^{r+1}\rho_{X_r}(\phi(X_1,\cdots,X_{r-1},\widehat{X_r},X_{r+1},\cdots,X_{n+1}))\\+
\displaystyle{\sum_{1\leq r<s\leq n+1}}(-1)^{r}\phi(X_1,\cdots,\widehat{X_r},\cdots,\ll X_r,X_s\gg_A,\cdots,X_{n+1}).
\end{multline*}
\noindent for all $\phi\in C^n(A;\rho)$ and $X_1,\cdots,X_{n+1}\in \Gamma A$. One can check that
$d_{A,\rho}\circ d_{A,\rho}= 0$.
\\ Let
\begin{gather*}
\mathcal{Z}^n(A,\rho):= \text{ker} (d_{A,\rho}\colon C^n(A,\rho)\longrightarrow C^{n+1}(A,\rho))\\
\mathcal{B}^n(A,\rho):= \text{im} (d_{A,\rho}\colon C^{n-1}(A,\rho)\longrightarrow C^n(A,\rho))
\end{gather*}
\noindent where $\mathcal{B}^0(A,\rho)= \{0\}$ and $n\in\mathbb{N}$. Then the $n^{th}$-cohomology of $A$ with coefficients in the representation $\rho$ is defined by the
quotient
\[
 \mathcal{H}L^n(A,\rho)= \mathcal{Z}^n(A,\rho)/\mathcal{B}^n(A,\rho).
\]
\end{defn}
\noindent {\bf Notation:} In the case, when the representation $\rho$ is the trivial representation, we denote the cohomology operator by $d_A$ and the cohomology modules simply by $\mathcal{H}L^\bullet(A).$
\begin{remark}\label{leibniz-c-r}
\noindent
\begin{enumerate}
\item In the case, when $M$ is a point, then $A$ is a Leibniz algebra and $\rho$ reduces to a representation of $A$ as a Leibniz algebra and in this case, the definition of cohomology reduces to the Leibniz algebra cohomology with coefficients in a representation.
\item When $\rho$ is the trivial representation then the cohomology as defined above is precisely the Leibniz algebroid cohomology as introduced in \cite{ilmp}.
\end{enumerate}
\end{remark}

\section{lie algebroid with a nambu structure and the modular class}

\noindent Let $M$ be a smooth manifold of dimension $m$. Recall that a  {\it{Nambu-Poisson bracket}} of order $n$, $n\leq m$, on $M$ is an $n$-multilinear
mapping 
$$\{,\ldots,\}\colon \underbrace{C^\infty(M)\times \cdots\times C^\infty(M)}_{n\hspace*{1mm}\text{times}}
\longrightarrow C^\infty(M)$$ satisfying the following:
\begin{enumerate}
\item Alternating:
\[
 \{f_1,\ldots,f_n\}= (-1)^{\varepsilon(\sigma)}\{f_{\sigma(1)},\ldots,f_{\sigma(n)}\},
\]
\noindent for all $f_1,\ldots,f_n\in C^\infty(M)$ and $\sigma\in \Sigma_n$, where $\Sigma_n$
is the symmetric group of $n$ elements and $\varepsilon(\sigma)$ is the parity of the permutation $\sigma$,\\
\item Leibniz rule: 
$$\{f_1g_1,f_2,\ldots,f_n\}= f_1\{g_1,f_2,\ldots,f_n\}+ g_1\{f_1,f_2.\ldots,f_n\},$$
\item Fundamental identity: 
$$\{f_1,\ldots,f_{n-1},\{g_1,\ldots,g_n\}\}=
\displaystyle{\sum_{i=1}^n}\{g_1,\ldots,\{f_1,\ldots,f_{n-1},g_i\},\ldots,g_n\}$$
for all $f_1,\ldots,f_{n-1},g_1,\ldots,g_n\in C^\infty(M)$.
\end{enumerate}
\noindent
Given a Nambu-Poisson bracket, one can define an $n$-vector field $\Lambda\in\Gamma (\Lambda^n TM)$ as
\[
 \Lambda(df_1,\ldots,df_n)=\{f_1,\ldots,f_n\},
\]
\noindent for $f_1,\ldots,f_n\in C^\infty(M)$. The pair $(M,\Lambda)$ is called a
{\it{Nambu-Poisson manifold of order $n$}}. Nambu structure on a Lie algebroid (\cite{wade}) is a generalization of Nambu-Poisson manifold. In this section, we introduce the notion of the modular class of a Lie algebroid equipped with a Nambu structure under some suitable assumptions.

Recall that for a Lie algebroid $(A,[,]_{A},a)$ over a smooth manifold M, the algebra $\Gamma(\Lambda^\bullet A)$ endowed with the standard wedge product $\wedge$ and the generalized Schouten bracket extending the bracket on $\Gamma A$ (also denoted by $[~,~]_A$) is a Gerstenhaber algebra.
\begin{defn}
Let $M$ be a smooth manifold. Let $(A,[,]_{A},a)$ be a Lie algebroid over $M$. Let $n\in\mathbb{N},~n \leq m =~ rank~ A$ and
$\Pi\in \Gamma(\Lambda^n A)$ a smooth section of the vector bundle $\Lambda^n A.$ We say that $\Pi$ is a {\it{Nambu structure}} of order $n$ if
\[
 [\Pi\alpha, \Pi]_{A}\beta= -\Pi(\iota_{\Pi\beta}d_{A}\alpha), \hspace*{3mm}\text{for any}\hspace*{1mm}
\alpha,\beta\in \Gamma(\Lambda^{n-1}A^\ast).
\]
 \noindent where $d_{A}$ denote coboundary operator for the Lie algebroid cohomology of $A$ with trivial coefficients and $\Pi\alpha:=
\iota_\alpha\Pi$ for all $\alpha\in\Gamma (\Lambda^\bullet A^\ast)$.
\end{defn}
\begin{remark}\label{associated_nambu_poisson}
The notion Nambu structure on Lie algebroids is a generalization of Nambu-Poisson manifolds in the sense that
given a Lie algebroid
$(A,[,]_A,a)$ over $M$ equipped with a Nambu structure, there is a natural Nambu-Poisson bracket on $C^\infty(M)$ arising from the given Nambu
structure and conversely, for any Nambu-Poisson manifold $(M,\Lambda)$ of order $n$, the $n$-vector field  $\Lambda\in\Gamma (\Lambda^n TM)$ satisfies the condition of the above definition with the Schouten-Nijenhuis bracket $[\cdot,~\cdot]_{SN}$ on multi-vector fields \cite{bv} and $d$ the de Rham differential operator and hence $\Lambda$ is a Nambu- structure on $TM$.
\end{remark}

\begin{defn}
\noindent

(i) Let $(A,[,]_{A},a)$ be a Lie algebroid over a smooth manifold $M$. A smooth section $\Pi\in  \Gamma(\Lambda^j A)$ is called {\it{locally decomposable}} if for any $x\in M$, either $\Pi(x)= 0$ or in a neighborhood of $x$, $\Pi$ can be expressed as $\Pi= X_1\wedge\cdots\wedge X_j$ where
$X_1,\ldots,X_j$ are local sections of $A$ defined on that neighbourhood.
\noindent

(ii) A point $x\in M$ is called a {\it{singular point}} of $\Pi$ if
$\Pi(x)=0$ and is called a {\it{regular point}} of $\Pi$ if $\Pi(x)\neq 0$.\\
\end{defn}

\begin{defn}
 Let $(A,[,]_{A},a)$ be a Lie algebroid over $M$ with a Nambu structure $\Pi$ of order $n$.
We say $\Pi$ is a {\it{regular Nambu structure}} if $\Pi(x)\neq 0$ for all $x\in M$ i.e. each point of $M$
is a regular point of $\Pi$.
\end{defn}

\noindent

\begin{remark}\label{linearly_independent}
Let $(A,[,]_{A},a)$ be a Lie algebroid over a smooth manifold $M$. Let $\Pi\in \Gamma (\Lambda^j A),$ where $j\in\mathbb{N}, j < \mbox{rank}~ A$. Let $x\in M$ be a regular point of $\Pi$. Let $\Pi$ be locally decomposable at $x$. Then we may choose a neighbourhood $U$ of $x$ and linearly independent local sections $X_1,\ldots,X_j$ of $A$ defined on $U$ such that $\Pi\mid_{U}$ is of the form $\Pi\mid_{U} = X_1\wedge\cdots\wedge X_j.$
\end{remark}
We have the following result \cite{wade}.
\begin{prop}\label{locally_decomposable}
 Let $(A,[,]_{A},a)$ be a Lie algebroid over $M$ equipped with a Nambu structure $\Pi$ of order $n$ with $n\geq 3$. Then for any $f \in  C^{\infty}(M),~ \Pi (d_Af)$ is locally decomposable. In particular,
if $\Gamma A^\ast$ is locally spanned by elements of the form $d_{A}f$ where $f\in C^\infty(M)$ then $\Pi$ is locally
decomposable.
\end{prop}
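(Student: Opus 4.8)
The result is the Lie-algebroid analogue of the classical theorem of Gautheron (see also Alekseevsky--Guha and Nakanishi) that a Nambu--Poisson tensor of order $\ge 3$ is pointwise decomposable, and the plan is to follow the proof of \cite{wade}, which adapts Gautheron's argument. I would split the proof into three parts: first reduce the assertion ``$\Pi(d_Af)$ is locally decomposable'' to a pointwise algebraic statement about the fibre value $\Pi(d_Af)|_x\in\Lambda^{n-1}A_x$; then establish that statement from the defining identity of a Nambu structure; and finally promote pointwise decomposability to local decomposability by smooth sections and deduce the ``in particular'' clause.

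For the reduction, fix $f\in C^\infty(M)$ and a point $x$ at which $\Pi(d_Af)|_x\neq 0$ (at the remaining points there is nothing to prove). By the Grassmann--Pl\"ucker criterion, a nonzero $q$-vector in a finite-dimensional vector space is decomposable precisely when its contraction with every covector of complementary degree, wedged with itself, vanishes. Applying this with $q=n-1$, and using that $\iota_\gamma\bigl(\Pi(d_Af)\bigr)=\pm\,\Pi(d_Af\wedge\gamma)$ for $\gamma\in\Gamma(\Lambda^{n-2}A^\ast)$ (which requires $n-2\ge 1$), I am reduced to proving the identity
\begin{equation*}
\Pi(d_Af\wedge\gamma)\wedge\Pi(d_Af)=0\qquad\text{in }\Gamma(\Lambda^{n}A)
\end{equation*}
for every $f\in C^\infty(M)$ and every $\gamma\in\Gamma(\Lambda^{n-2}A^\ast)$.

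To prove this identity I would feed $\alpha=d_Af\wedge\gamma\in\Gamma(\Lambda^{n-1}A^\ast)$ into the Nambu structure equation $[\Pi\alpha,\Pi]_A\beta=-\Pi(\iota_{\Pi\beta}d_A\alpha)$. Since $d_A^2=0$ one has $d_A\alpha=-\,d_Af\wedge d_A\gamma$, so, using the Leibniz rule for $\iota_{\Pi\beta}$ over $\wedge$ together with $\iota_{\Pi\beta}d_Af=a(\Pi\beta)(f)$, the right-hand side becomes $\bigl(a(\Pi\beta)(f)\bigr)\,\Pi(d_A\gamma)-\Pi\bigl(d_Af\wedge\iota_{\Pi\beta}d_A\gamma\bigr)$. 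On the left-hand side, $\Pi\alpha=\Pi(d_Af\wedge\gamma)$ is a section $X\in\Gamma A$ and $[\Pi\alpha,\Pi]_A=\mathcal{L}_X\Pi$; expanding $\iota_\beta(\mathcal{L}_X\Pi)$ by the Cartan-type formulas relating $\mathcal{L}_X$, $\iota_X$ and $d_A$ (with $\iota_\beta\Pi=\Pi\beta$ and $\mathcal{L}_X(\Pi\beta)=[X,\Pi\beta]_A$), the terms in which $d_Af$ occurs twice cancel, and after simplification one is left precisely with the wedge identity above. The hard part will be exactly this bookkeeping: isolating, from the full Nambu identity, the purely quadratic-in-$\Pi$ relation. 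The cancellations only work because $\gamma$ sits in positive degree $n-2\ge 1$, i.e. because $n\ge 3$ --- for $n=2$, $\Pi$ is merely a Lie-algebroid Poisson bivector and is not decomposable in general. These are the details that \cite{wade} carries out along the lines of Gautheron's computation, and I would reproduce them.

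For the last part, on the open set where $\Pi(d_Af)$ does not vanish its support (the image of $\gamma\mapsto\iota_\gamma(\Pi(d_Af))$) is, by the previous step, an $(n-1)$-dimensional subspace of $A_x$ at every such $x$, hence a smooth rank-$(n-1)$ subbundle; choosing a local frame $X_1,\dots,X_{n-1}$ of it and absorbing the scalar factor into $X_1$ exhibits $\Pi(d_Af)$ locally as $X_1\wedge\cdots\wedge X_{n-1}$. Finally, if $\Gamma A^\ast$ is locally spanned by elements of the form $d_Af$, then by $C^\infty(M)$-linearity of the contraction and the first part applied to each such $f$, every $\iota_\theta\Pi$ with $\theta$ a local section of $A^\ast$ is locally decomposable; a standard fact from linear algebra --- a multivector of degree at least $3$ all of whose contractions with $1$-forms are decomposable is itself decomposable --- then gives local decomposability of $\Pi$ and completes the proof.
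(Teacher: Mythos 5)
The paper offers no proof of Proposition~\ref{locally_decomposable} to compare against: it is stated as a result imported from \cite{wade}. Measured on its own terms, your overall strategy is the right one and is the route the literature takes (Gautheron's argument as adapted in \cite{wade}): reduce pointwise decomposability of the $(n-1)$-vector $\Pi(d_Af)$ to the Grassmann--Pl\"ucker criterion, note $\iota_\gamma\bigl(\Pi(d_Af)\bigr)=\pm\,\Pi(d_Af\wedge\gamma)$, and aim at the quadratic identity $\Pi(d_Af\wedge\gamma)\wedge\Pi(d_Af)=0$. The computation of the right-hand side after substituting $\alpha=d_Af\wedge\gamma$ into the Nambu identity (using $d_A\alpha=-d_Af\wedge d_A\gamma$) is also correct.

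There are, however, two genuine gaps. First, the entire mathematical content of the proposition --- deriving the Pl\"ucker relation from $[\Pi\alpha,\Pi]_A\beta=-\Pi(\iota_{\Pi\beta}d_A\alpha)$ --- is asserted, not carried out: you write that ``the terms in which $d_Af$ occurs twice cancel, and after simplification one is left precisely with the wedge identity,'' and then defer the bookkeeping to \cite{wade}. This is not a routine cancellation: the left-hand side $\iota_\beta\mathcal{L}_{\Pi\alpha}\Pi$ is first order in $\Pi$, the right-hand side contains $\bigl(a(\Pi\beta)f\bigr)\Pi(d_A\gamma)$ and $\Pi\bigl(d_Af\wedge\iota_{\Pi\beta}d_A\gamma\bigr)$, neither of which is visibly a multiple of $\Pi(d_Af)$, and the published proofs must evaluate the identity at a point on test data with prescribed values and first jets in order to isolate the purely algebraic (zeroth-order) part. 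Without that step your proof is a citation, which is no more than what the paper itself does. Second, in the ``in particular'' clause you infer from $\theta=\sum_i g_i\,d_Af_i$ ``by $C^\infty(M)$-linearity'' that $\iota_\theta\Pi$ is decomposable for every local section $\theta$ of $A^\ast$. This does not follow: a linear combination of decomposable $(n-1)$-vectors need not be decomposable, so decomposability of the individual $\iota_{d_Af_i}\Pi$ does not propagate linearly; the statement is true only a posteriori, because $\Pi$ itself turns out to be decomposable, which makes the reasoning circular. The standard repair is different: at a regular point one shows that the sections $\Pi(d_Af_1\wedge\cdots\wedge d_Af_{n-1})$ span an $n$-dimensional subspace $\mathcal{D}_x\subset A_x$ (compare Lemma~\ref{subbundle}) and that $\Pi(x)\in\Lambda^n\mathcal{D}_x$, which forces decomposability since every top-degree multivector of an $n$-dimensional space is decomposable.
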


Let $(A,[,]_{A},a)$ be a Lie algebroid over $M$ with a Nambu structure $\Pi$ of order $n$.
Then $\Pi$ induces a morphism of vector bundles $\Pi_k\colon\Lambda^k A^\ast\longrightarrow \Lambda^{n-k}A$ for $k\leq n$
defined by
\[
 \Pi_k(\alpha)= i_\alpha\Pi(x)\hspace*{2mm}\text{for all}\hspace*{2mm}\alpha\in\Lambda^k A^\ast_x
\]
\noindent
Hence we have a $C^{\infty}(M)$-linear map $\Pi_k\colon \Gamma (\Lambda^k A^\ast)\longrightarrow \Gamma (\Lambda^{n-k}A)$ given by
\[
 \Pi_k(\alpha):=\Pi\alpha = i_{\alpha} \Pi \hspace*{2mm}\text{for all}\hspace*{2mm}\alpha\in\Gamma (\Lambda^k A^\ast).
\]

\begin{remark}
Note that the induced map $\bar{\Pi}_k\colon \frac{\Gamma (\Lambda^k A^\ast)}{\mbox{ker}~\Pi_k}\longrightarrow \Pi_k(\Gamma (\Lambda^k A^\ast))$ given by
$$\bar{\Pi}_k([\alpha]):=\Pi\alpha = i_{\alpha} \Pi \hspace*{2mm}\text{for all}\hspace*{2mm}\alpha\in\Gamma (\Lambda^k A^\ast)$$ is a $C^{\infty}(M)$-linear module isomorphism where $[\alpha] \in \frac{\Gamma (\Lambda^k A^\ast)}{\mbox{ker}~\Pi_k}.$
\end{remark}

\begin{lemma}\label{subbundle}
Let $(A,[,]_{A},a)$ be a Lie algebroid over $M$ equipped with a regular Nambu structure $\Pi$ of order $n$. Assume further that $\Gamma A^\ast$ is locally spanned by elements of the form $d_{A}f$ where $f\in C^\infty(M)$. Then

$$ \mbox{dim}_\mathbb{R}\Pi_{n-1}(\Lambda^{n-1} A^\ast_x)= n,~~x \in M.$$

Let  $\mathcal D = \bigcup_{x \in M}{\mathcal D}_x$ of $A$ where ${\mathcal D}_x = \Pi_{n-1}(\Lambda^{n-1}A_x^\ast).$ Thus $\mathcal D$ is a subbundle of $A$.
\end{lemma}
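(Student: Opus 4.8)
The key point is the pointwise dimension count $\dim_{\mathbb R}\Pi_{n-1}(\Lambda^{n-1}A^\ast_x)=n$; once this is established for every $x$, the fact that $\mathcal D$ is a subbundle follows from standard local triviality arguments, since $\mathcal D$ is locally spanned by a constant number of smooth sections of $A$. So I would split the argument into: (a) the dimension count, and (b) the subbundle conclusion.

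**Step (a): the dimension count.** Fix $x\in M$. Since $\Pi$ is regular, $\Pi(x)\neq 0$, and by Proposition~\ref{locally_decomposable} (using $n\geq 3$ and the hypothesis that $\Gamma A^\ast$ is locally spanned by elements $d_Af$) the section $\Pi$ is locally decomposable; hence by Remark~\ref{linearly_independent} we may choose a neighbourhood $U$ of $x$ and linearly independent local sections $X_1,\dots,X_n\in\Gamma_U A$ with $\Pi|_U=X_1\wedge\cdots\wedge X_n$. Extend to a local frame $X_1,\dots,X_m$ of $A$ over $U$ (shrinking $U$ if needed), with dual coframe $\theta^1,\dots,\theta^m$ of $A^\ast$. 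A general element of $\Lambda^{n-1}A^\ast_x$ is a linear combination of the $\theta^{i_1}\wedge\cdots\wedge\theta^{i_{n-1}}$, and contracting $\Pi(x)=X_1\wedge\cdots\wedge X_n$ against such a basis element gives, up to sign, $X_j$ when $\{i_1,\dots,i_{n-1}\}=\{1,\dots,n\}\setminus\{j\}$ and $0$ otherwise (this is where $\iota_{\theta^i}$ on a decomposable multivector is a composition of single contractions, as recorded in the Preliminaries). Therefore $\Pi_{n-1}(\Lambda^{n-1}A^\ast_x)=\mathrm{span}_{\mathbb R}\{X_1(x),\dots,X_n(x)\}$, which has dimension exactly $n$ because the $X_j(x)$ are linearly independent. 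Along the way this also shows $\mathcal D_y=\mathrm{span}\{X_1(y),\dots,X_n(y)\}$ for all $y\in U$.

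**Step (b): $\mathcal D$ is a subbundle.** The computation in Step (a) is uniform over the whole neighbourhood $U$: for every $y\in U$ we have $\mathcal D_y=\mathrm{span}_{\mathbb R}\{X_1(y),\dots,X_n(y)\}$ with $X_1,\dots,X_n$ a linearly independent family of local sections of $A|_U$. Thus $\mathcal D|_U$ is the image of the injective vector bundle map $U\times\mathbb R^n\to A|_U$ sending $(y,c_1,\dots,c_n)\mapsto\sum c_jX_j(y)$, so $\mathcal D|_U$ is a rank-$n$ subbundle of $A|_U$. Since such a neighbourhood exists around every point of $M$ and the rank $n$ is constant, the local pieces patch to give a subbundle $\mathcal D$ of $A$ of rank $n$.

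**Main obstacle.** The only genuine content is the dimension count, and the real work there is invoking Proposition~\ref{locally_decomposable} to get local decomposability with \emph{linearly independent} representatives (Remark~\ref{linearly_independent}) — this is precisely why the hypotheses $n\geq 3$ and ``$\Gamma A^\ast$ locally spanned by $d_Af$'' are imposed. After that, the contraction identity $\iota_{\theta^{i_1}\wedge\cdots\wedge\theta^{i_{n-1}}}(X_1\wedge\cdots\wedge X_n)=\pm X_j$ is a routine consequence of the formulas for $\iota_\phi$ on decomposable multisections recorded in Section~\ref{prel}, and the subbundle statement is then formal. I would be careful only about sign bookkeeping in the contraction (which does not affect the span) and about shrinking $U$ so that $X_1,\dots,X_n$ extends to a full frame.
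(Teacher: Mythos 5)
Your proposal is correct and follows essentially the same route as the paper: both use regularity plus local decomposability to write $\Pi|_U=X_1\wedge\cdots\wedge X_n$ with linearly independent local sections, extend to a local frame, and compute that contraction of $(n-1)$-covectors against $\Pi(x)$ sweeps out exactly $\mathrm{span}\{X_1(x),\dots,X_n(x)\}$. Your Step (b) merely makes explicit the local-triviality patching that the paper leaves implicit, and your tracing of the hypotheses through Proposition~\ref{locally_decomposable} and Remark~\ref{linearly_independent} is if anything slightly more careful than the paper's.
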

\begin{proof}
Let  $x\in M$. Then by Lemma \ref{linearly_independent}, we can find a trivializing neighbourhood $U$ of $x$ and linearly independent local sections $X_1, \ldots, X_n$ on $U$ such that 
\[
 \Pi\mid_U= X_1\wedge\cdots\wedge X_n.
\]

Now extend $X_1,\ldots, X_n$ to linearly independent sections $X_1, \ldots, X_n, X_{n+1}, \ldots, X_m$ on $U$, $m$ being the rank of the vector bundle $A$, so that at each point $y\in U$, $X_1(y), \ldots,  X_m(y)$ is a basis of $A_y$.

Let $\alpha\in \Lambda^{n-1}A^\ast_x$. Then
\[
 \alpha= \displaystyle{\sum_{1\leq i_1<\cdots<i_{n-1}\leq m}}\alpha_{i_1\ldots i_{n-1}}
X_{i_1}^\ast(x)\wedge\cdots\wedge X_{i_{n-1}}^\ast(x)\hspace*{2mm}\text{for some} \hspace*{2mm}
\alpha_{i_1\ldots i_{n-1}}\in\mathbb{R}
\]
\noindent After rearranging terms and renaming, we have
\[
 \alpha= \displaystyle{\sum_{i=1}^n}\alpha_i X_1^\ast(x)\wedge\cdots\wedge
\widehat{X_i^\ast(x)}\wedge\cdots\wedge X^\ast_n(x)
+ \alpha^\prime\hspace*{2mm}\text{where}\hspace*{2mm} \Pi_{n-1}(\alpha^\prime)= 0.
\]
\noindent Applying $\Pi_{n-1}$ on $\alpha$ we get,
\[
 \Pi_{n-1}(\alpha)= \displaystyle{\sum_{i=1}^n}(-1)^{n-i}\alpha_i X_i(x)
\]
\noindent Consider $\beta_i\in\Lambda^{n-1}A^\ast_x$ where
$\beta_i= (-1)^{n-i} X_1^\ast(x)\wedge\cdots\wedge\widehat{X_i^\ast(x)}\wedge\cdots\wedge X^\ast_n(x)$.\\
Then $\Pi_{n-1}(\beta_i)= X_i(x)$. Hence $\text{dim}_\mathbb{R}\Pi_{n-1}(\Lambda^{n-1} A^\ast_x)\geq n$.\\
Clearly, $\text{dim}_\mathbb{R}\Pi_{n-1}(\Lambda^{n-1} A^\ast_x)\leq n$.\\
Hence the lemma.
\end{proof}
\begin{remark}\label{orthogonal} 
Let $(A,[,]_{A},a)$ be a Lie algebroid over a smooth  manifold $M$ equipped with a regular Nambu structure $\Pi$ of order $n\leq m= ~rank~A.$  Assume that $\Gamma (A^\ast)$ is locally spanned by elements of the form $d_Af,~~f \in C^{\infty}(M).$ 
\begin{enumerate}
\item For all $k,~ k \leq n$, $\mbox{Ker}~\Pi_k$ is a vector subbundle of $\Lambda^k A^\ast \rightarrow M$ of rank 
$$\left ( \begin{array}{c}
m\\k
\end{array} \right) - \left ( \begin{array}{c}
n\\k
\end{array} \right).$$ 
Similarly, $\Pi_k(\Lambda^kA^\ast)$ is a subbundle of $\Lambda^{n-k}A \rightarrow M$ of rank $\left ( \begin{array}{c}
n\\k
\end{array} \right).$

\item Since $M$ is paracompact, we may assume that the vector bundle $A$ is Euclidean. Let ${\mathcal D}^0(x)$ be the annihilator of the subspace $\mathcal D (x)$ of $A_x$. Thus ${\mathcal D}^0(x)$ consists of all elements $u^\ast \in A^\ast_x$ such that $u^\ast(v) =0$ for all $v\in {\mathcal D} (x).$ Note that ${\mathcal D}^0(x) = \mbox{Ker}~(\Pi_1|A^\ast_x)$ for all $x \in M$. An $(n-k)$-multisection $P$ is orthogonal to $\mathcal D$ if $\iota_{u^\ast}P(x) =0,$ for all $u^\ast \in {\mathcal D}^0(x), ~~x \in M.$ Then the space of smooth sections of the vector bundles $\Pi_k(\Lambda ^kA^\ast) \rightarrow M$ are precisely the set of all $(n-k)$-multisections of $A$ that are orthogonal to $\mathcal D.$
\end{enumerate}
\end{remark}

\noindent We have the following proposition \cite{wade}.
\begin{prop}\label{wade-leibniz-algebroid}
Let $(A,[,]_{A},a)$ be a Lie algebroid over $M$ with a Nambu structure $\Pi$ of order $n$.
In that case, the triplet
$(\mathscr{A}= \Lambda^{n-1}A^\ast,\ll,\gg_\ast,a\circ \Pi_{n-1})$ determines a Leibniz algebroid, where
$\ll\cdot,\cdot\gg_\ast$ is defined by
\[
 \ll \alpha,\beta\gg_\ast= \mathcal{L}_{\Pi\alpha}\beta- \iota_{\Pi\beta}d_{A}\alpha,\hspace*{3mm}
\mbox{for any}\hspace*{1mm} \alpha,\beta\in \Gamma \mathscr{A}.
\]
\end{prop}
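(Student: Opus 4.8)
The plan is to verify directly the two axioms of Definition~\ref{leibniz-d} for the bracket $\ll\alpha,\beta\gg_\ast=\mathcal{L}_{\Pi\alpha}\beta-\iota_{\Pi\beta}d_A\alpha$ on $\Gamma\mathscr{A}=\Gamma(\Lambda^{n-1}A^\ast)$ with anchor $a\circ\Pi_{n-1}$, and then to note that the anchor--bracket compatibility is automatic. Axiom~(1) is a routine $C^\infty(M)$-bookkeeping check: since $\Pi_{n-1}$ is $C^\infty(M)$-linear one has $\Pi(f\beta)=f\,\Pi\beta$, hence $\iota_{\Pi(f\beta)}d_A\alpha=f\,\iota_{\Pi\beta}d_A\alpha$, while the Leibniz property of the Lie derivative gives $\mathcal{L}_{\Pi\alpha}(f\beta)=f\,\mathcal{L}_{\Pi\alpha}\beta+\big(a(\Pi\alpha)(f)\big)\beta$; adding these yields $\ll\alpha,f\beta\gg_\ast=f\ll\alpha,\beta\gg_\ast+\big((a\circ\Pi_{n-1})(\alpha)(f)\big)\beta$, which is axiom~(1).

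The crucial step, and the only place where the Nambu condition $[\Pi\alpha,\Pi]_A\beta=-\Pi(\iota_{\Pi\beta}d_A\alpha)$ enters, is the identity
\begin{equation*}
\Pi_{n-1}\big(\ll\alpha,\beta\gg_\ast\big)=[\Pi\alpha,\Pi\beta]_A,\qquad \alpha,\beta\in\Gamma\mathscr{A}.\tag{$\star$}
\end{equation*}
To prove $(\star)$ I would contract $\ll\alpha,\beta\gg_\ast$ with $\Pi$, i.e. compute $\iota_{\ll\alpha,\beta\gg_\ast}\Pi$. Using the compatibility of $\mathcal{L}_X$ ($X\in\Gamma A$) with contraction, $[X,\iota_\beta\Pi]_A=\iota_{\mathcal{L}_X\beta}\Pi+\iota_\beta([X,\Pi]_A)$ (see \cite{mac}), together with $\mathcal{L}_{\Pi\alpha}(\iota_\beta\Pi)=[\Pi\alpha,\Pi\beta]_A$, one gets $\iota_{\mathcal{L}_{\Pi\alpha}\beta}\Pi=[\Pi\alpha,\Pi\beta]_A-[\Pi\alpha,\Pi]_A\beta$. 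The Nambu condition rewrites $[\Pi\alpha,\Pi]_A\beta=-\Pi(\iota_{\Pi\beta}d_A\alpha)=-\iota_{\iota_{\Pi\beta}d_A\alpha}\Pi$, and the term $\iota_{\iota_{\Pi\beta}d_A\alpha}\Pi$ is precisely the contraction with $\Pi$ of the second summand of $\ll\alpha,\beta\gg_\ast$, so the two contributions cancel and $(\star)$ follows. (Every multisection $\Pi\alpha,\Pi\beta,\Pi\gamma$ appearing here has degree $1$, so the sign factors $(-1)^{[\frac{i}{2}]}$ in the contraction pairings are trivial throughout.) As an immediate consequence, $(a\circ\Pi_{n-1})\big(\ll\alpha,\beta\gg_\ast\big)=a\big([\Pi\alpha,\Pi\beta]_A\big)=[a(\Pi\alpha),a(\Pi\beta)]_{TM}$, the anchor--bracket compatibility --- which in any case is forced by Proposition~$3.1$ of \cite{db} once the two axioms hold.

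Finally, axiom~(2) --- the Leibniz identity $\ll\alpha,\ll\beta,\gamma\gg_\ast\gg_\ast=\ll\ll\alpha,\beta\gg_\ast,\gamma\gg_\ast+\ll\beta,\ll\alpha,\gamma\gg_\ast\gg_\ast$ --- I would obtain by expanding the three terms via the definition of $\ll,\gg_\ast$, writing $\widehat{\alpha}:=\Pi\alpha$ and using $(\star)$ to replace $\widehat{\ll\beta,\gamma\gg_\ast}$ by $[\widehat\beta,\widehat\gamma]_A$ and likewise for the other nested brackets. In the difference $\ll\alpha,\ll\beta,\gamma\gg_\ast\gg_\ast-\ll\ll\alpha,\beta\gg_\ast,\gamma\gg_\ast-\ll\beta,\ll\alpha,\gamma\gg_\ast\gg_\ast$ the terms acting on $\gamma$ cancel by $[\mathcal{L}_{\widehat\alpha},\mathcal{L}_{\widehat\beta}]=\mathcal{L}_{[\widehat\alpha,\widehat\beta]_A}$; the mixed terms collapse by repeated use of $[\mathcal{L}_X,\iota_Y]=\iota_{[X,Y]_A}$ and $\mathcal{L}_X d_A=d_A\mathcal{L}_X$; and what remains equals $\iota_{\widehat\gamma}d_A\big(\mathcal{L}_{\widehat\beta}\alpha-\iota_{\widehat\beta}d_A\alpha\big)=\iota_{\widehat\gamma}d_A\big(d_A\iota_{\widehat\beta}\alpha\big)=0$ by Cartan's formula $\mathcal{L}_X=\iota_X d_A+d_A\iota_X$ and $d_A^2=0$. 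The main obstacle is establishing $(\star)$ and keeping the Schouten-bracket and contraction sign conventions consistent throughout; once $(\star)$ is available, axiom~(2) is a purely formal consequence of the Cartan-type calculus on the Lie algebroid $A$ recalled in \S\ref{prel}.
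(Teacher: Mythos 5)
Your argument is correct, but be aware that the paper offers no proof of this proposition at all: it is quoted from Wade \cite{wade}, so the only meaningful comparison is with the closest in-paper material. Your pivotal identity $(\star)$, namely $\Pi_{n-1}(\ll\alpha,\beta\gg_\ast)=[\Pi\alpha,\Pi\beta]_A$, is precisely Corollary~\ref{morphism-leibalgd}, which the paper deduces from the lemma $[\Pi\eta_1,\Pi\eta_2]_A=[\Pi\eta_1,\Pi]_A\eta_2+\Pi(\mathcal{L}_{\Pi\eta_1}\eta_2)$; there that lemma is proved by reducing to decomposable $\Pi$, $\eta_1$, $\eta_2$ and induction on $n$, whereas you obtain it in one line from the derivation property of $\mathcal{L}_{\Pi\alpha}=[\Pi\alpha,\cdot]_A$ over contraction, which is cleaner and avoids the reduction step. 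Granting $(\star)$, your verification of the two axioms of Definition~\ref{leibniz-d} is complete: axiom~(1) is the routine computation you give, and for the left Leibniz identity your bookkeeping checks out --- after cancelling the $\gamma$-terms via $[\mathcal{L}_{\Pi\alpha},\mathcal{L}_{\Pi\beta}]=\mathcal{L}_{[\Pi\alpha,\Pi\beta]_A}$ and the mixed terms via $[\mathcal{L}_X,\iota_Y]=\iota_{[X,Y]_A}$ and $\mathcal{L}_Xd_A=d_A\mathcal{L}_X$, the residue is indeed $\iota_{\Pi\gamma}\,d_A\big(\mathcal{L}_{\Pi\beta}\alpha-\iota_{\Pi\beta}d_A\alpha\big)=\iota_{\Pi\gamma}\,d_A\big(d_A\iota_{\Pi\beta}\alpha\big)=0$, and the Nambu condition enters only through $(\star)$, exactly as you say. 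The one point worth stating explicitly is that the Cartan-type identities you invoke hold for the Lie algebroid operators of Definition~\ref{lie-L-i}; the paper asserts this (citing \cite{mac}) in \S\ref{prel}, so your proof is self-contained relative to the paper's standing conventions.
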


\begin{lemma}
 Let $(A,[,]_{A},a)$ be a Lie algebroid over $M$ with a Nambu structure $\Pi$ of order $n$.
Let $\eta_1,\eta_2\in \Gamma \Lambda^{n-1} (A)$. Then
\[
[\Pi\eta_1,\Pi\eta_2]_{A}= [\Pi\eta_1,\Pi]_{A}\eta_2+ \Pi(\mathcal{L}_{\Pi\eta_1}\eta_2)
\]
\end{lemma}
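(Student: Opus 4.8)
The plan is to obtain the identity from the standard Cartan-type calculus on the Lie algebroid $A$, as a special case of a formula valid for an arbitrary section in place of $\Pi\eta_1$. Here $\eta_1,\eta_2$ are read as sections of $\Lambda^{n-1}A^\ast$, so that $\Pi\eta_i=\iota_{\eta_i}\Pi\in\Gamma A$; moreover $[\Pi\eta_1,\Pi]_A\in\Gamma(\Lambda^nA)$ is the generalized Schouten bracket of the section $\Pi\eta_1$ with $\Pi$, $[\Pi\eta_1,\Pi]_A\,\eta_2:=\iota_{\eta_2}\big([\Pi\eta_1,\Pi]_A\big)\in\Gamma A$, and $\Pi(\mathcal{L}_{\Pi\eta_1}\eta_2)=\iota_{\mathcal{L}_{\Pi\eta_1}\eta_2}\Pi\in\Gamma A$, so the asserted equality is an equality in $\Gamma A$. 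I will prove that for every $X\in\Gamma A$, every $P\in\Gamma(\Lambda^pA)$ and every $\beta\in\Gamma(\Lambda^kA^\ast)$ with $k\le p$,
$$[X,\iota_\beta P]_A=\iota_{\mathcal{L}_X\beta}\,P+\iota_\beta\,[X,P]_A ,$$
where $\mathcal{L}_X$ on $\Gamma(\Lambda^\bullet A)$ is the Lie derivative $[X,\cdot]_A$ (generalized Schouten bracket, restricting to the Lie algebroid bracket on $\Gamma A$) and $\mathcal{L}_X$ on $\Gamma(\Lambda^\bullet A^\ast)$ is as in Definition \ref{lie-L-i}. Taking $X=\Pi\eta_1$, $P=\Pi$, $\beta=\eta_2$ gives precisely the Lemma; in particular the special form $X=\Pi\eta_1$ plays no role.

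The proof of the displayed formula rests on the contraction-derivation identity
$$\mathcal{L}_X(\iota_\beta P)=\iota_{\mathcal{L}_X\beta}\,P+\iota_\beta(\mathcal{L}_X P),$$
one of the Cartan-type formulas of the Lie algebroid calculus (see \cite{mac}): once this is known, the displayed formula follows since $[X,\iota_\beta P]_A=\mathcal{L}_X(\iota_\beta P)$ and $[X,P]_A=\mathcal{L}_X P$ by the definition of $\mathcal{L}_X$ on multisections. To establish the contraction-derivation identity I would reduce to $\beta$ of degree $1$: for $\theta\in\Gamma A^\ast$ one checks $\mathcal{L}_X\iota_\theta-\iota_\theta\mathcal{L}_X=\iota_{\mathcal{L}_X\theta}$ as operators on $\Gamma(\Lambda^\bullet A)$ — immediate in degree one from Definition \ref{lie-L-i} (with $n=1$), and then valid on all of $\Gamma(\Lambda^\bullet A)$ because both sides are degree $(-1)$ derivations of $\wedge$ agreeing on $\Gamma A$ — and then iterate over a local decomposition $\beta=\theta_1\wedge\cdots\wedge\theta_k$, using $\iota_\beta=\iota_{\theta_1}\cdots\iota_{\theta_k}$ and that $\mathcal{L}_X$ is a derivation of the wedge product of forms. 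Since both sides of the contraction-derivation identity are $C^\infty(M)$-linear in $P$ and transform the same way under $\beta\mapsto f\beta$, it suffices to verify it on such a local frame.

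The only delicate point is the sign bookkeeping: in this paper the contractions of forms into multisections carry the normalization $(-1)^{[i/2]}$, so one must verify that these factors occur identically in all three terms of the displayed formula — which they do, since $\beta$ and $\mathcal{L}_X\beta$ have the same degree and in each term are contracted against multisections of the same degree — and that the commutator $\mathcal{L}_X\iota_\theta-\iota_\theta\mathcal{L}_X$ produces no extra sign for $\theta$ of degree $1$. If one prefers to stay closer to the formulas explicitly recalled in Section \ref{prel}, the same identity can be obtained by pairing both sides with an arbitrary $\theta\in\Gamma A^\ast$, writing $\Pi$ locally as $X_1\wedge\cdots\wedge X_n$ and $\eta_2$ locally as a sum of terms $\theta_1\wedge\cdots\wedge\theta_{n-1}$, and reducing everything to the Leibniz rules for $\iota_X$ and $\iota_\theta$ there together with $\mathcal{L}_X=[X,\cdot]_A$ on sections; this is longer but purely mechanical. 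In either route the algebraic content is formal, and the sign bookkeeping is the main — in fact the only — thing requiring attention.
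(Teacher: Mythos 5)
Your proof is correct, but it takes a genuinely different route from the paper's. You isolate the general Cartan-calculus identity $[X,\iota_\beta P]_A=\iota_{\mathcal{L}_X\beta}P+\iota_\beta [X,P]_A$ for arbitrary $X\in\Gamma A$, $P\in\Gamma(\Lambda^pA)$, $\beta\in\Gamma(\Lambda^kA^\ast)$, and obtain it from the operator identity $\mathcal{L}_X\circ\iota_\theta-\iota_\theta\circ\mathcal{L}_X=\iota_{\mathcal{L}_X\theta}$ for $\theta\in\Gamma A^\ast$, proved by checking on $C^\infty(M)$ and $\Gamma A$ and noting both sides are odd derivations of $\wedge$; the lemma is then the specialization $X=\Pi\eta_1$, $P=\Pi$, $\beta=\eta_2$, and, as you observe, the Nambu condition on $\Pi$ plays no role. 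The paper instead computes directly on decomposables: it reduces to $\Pi=X_1\wedge\cdots\wedge X_n$ and decomposable $\eta_i$, asserts that an induction reduces everything to $n=2$ with $\Pi=X_1\wedge X_2$ and $\eta_i\in\Gamma A^\ast$, and then expands all three terms of the identity using the Leibniz rules for contraction and the Gerstenhaber bracket. Your version buys generality and makes the inductive step transparent (it is exactly the derivation property of $\mathcal{L}_X\circ\iota_\theta-\iota_\theta\circ\mathcal{L}_X$ together with iteration over $\iota_{\theta_1}\cdots\iota_{\theta_k}$), whereas the paper's argument is more elementary but leaves the induction over $n$ unspelled. Your attention to the two genuinely delicate points — that both sides transform identically under $\beta\mapsto f\beta$, so checking on a local frame of decomposables suffices, and that the $(-1)^{[i/2]}$ normalization contributes the same factor to all three terms because $\beta$ and $\mathcal{L}_X\beta$ have the same degree — is exactly right. (Incidentally, both you and the paper's own proof read $\eta_1,\eta_2$ as sections of $\Lambda^{n-1}A^\ast$, correcting what is evidently a typo in the statement.)
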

\begin{proof}
It is enough to verify the equality for
$\Pi= X_1\wedge X_2\wedge\cdots\wedge X_n$, $\eta_1= Y_1\wedge\cdots\wedge Y_{n-1}$
and $\eta_2= Z_1\wedge\cdots\wedge Z_{n-1}$.\\
Also, by induction, it is enough to verify this for $n=2$, that is, for $\Pi= X_1\wedge X_2$ where
$X_1, X_2\in\Gamma A$ and $\eta_1, \eta_2\in\Gamma A^\ast$.\\
Consider
\[
[\Pi\eta_1,\Pi\eta_2]_{A}= [\Pi\eta_1,\iota_{\eta_2}(X_1\wedge X_2)]_{A}
\]
\noindent We know that for $\theta\in\Gamma A^\ast$ and $\xi_1,\xi_2\in\Gamma (\Lambda^\bullet A)$,
\[
 \iota_\theta(\xi_1\wedge\xi_2)= \iota_\theta(\xi_1)\wedge\xi_2+ (-1)^{\text{deg}(\xi_1)}\xi_1\wedge \iota_\theta(\xi_2)
\]
\noindent Hence we have
\begin{align*}
 [\Pi\eta_1,\Pi\eta_2]_{A}=& [\Pi\eta_1, \iota_{\eta_2}(X_1)\wedge X_2- X_1\wedge \iota_{\eta_2}(X_2)]_{A}\\
           =& [\Pi\eta_1, \iota_{\eta_2}(X_1)\wedge X_2]_{A}- [\Pi\eta_1,X_1\wedge \iota_{\eta_2}(X_2)]_{A}
\end{align*}
\noindent By the definition of Gerstenhaber bracket $[,]_{A}$ on $\Gamma (\Lambda^\bullet A)$, we have
\[
[\Pi\eta_1,\Pi\eta_2]_{A}\] 
\[= [\Pi\eta_1,\iota_{\eta_2}(X_1)]_{A}\wedge X_2+ \iota_{\eta_2}(X_1)[\Pi\eta_1,X_2]_{A}
            -[\Pi\eta_1, X_1]_{A}\wedge \iota_{\eta_2}(X_2)- X_1\wedge[\Pi\eta_1,\iota_{\eta_2}(X_2)]_{A}
\]
$$= a(\Pi\eta_1)(\iota_{\eta_2}(X_1))\wedge X_2+ \iota_{\eta_2}(X_1)[\Pi\eta_1,X_2]_{A}
            -[\Pi\eta_1, X_1]_{A}\wedge \iota_{\eta_2}(X_2)- X_1\wedge a(\Pi\eta_1)(\iota_{\eta_2}(X_2)).$$
Notice that $[\Pi\eta_1,\iota_{\eta_2}(X_1)]_{A}= a(\Pi\eta_1)(\iota_{\eta_2}(X_1))$.        

Similarly, we get
$$[\Pi\eta_1, \Pi]_{A}\eta_2= \iota_{\eta_2}(X_1)[\Pi\eta_1,X_2]_{A}- X_1\wedge \iota_{\eta_2}([\Pi\eta_1, X_2]_{A}).$$
Now consider
\begin{align*}
\Pi(\mathcal{L}_{\Pi\eta_1}\eta_2)=&\iota_{\mathcal{L}_{\Pi\eta_1}\eta_2}\Pi=\iota_{\mathcal{L}_{\Pi\eta_1}\eta_2}(X_1\wedge X_2)\\
       =&\big(\iota_{\mathcal{L}_{\Pi\eta_1}\eta_2} X_1\big)\wedge X_2- X_1\wedge\big(\iota_{\mathcal{L}_{\Pi\eta_1}\eta_2} X_2\big)
\end{align*}
\noindent By the definition of Lie derivative and contraction, we get
\begin{align*}
 \iota_{\mathcal{L}_{\Pi\eta_1}\eta_2} X_1= & \big(\mathcal{L}_{\Pi\eta_1}\eta_2 \big)(X_1)\\
                =& a(\Pi\eta_1)(\eta_2(X_1))- \eta_2([\Pi\eta_1, X_1]_{A})\\
                =& a(\Pi\eta_1)(\eta_2(X_1))- \iota_{\eta_2}([\Pi\eta_1, X_1]_{A})
\end{align*}
\noindent
By similar arguments we get,
$$\Pi(\mathcal{L}_{\Pi\eta_1}\eta_2)= a(\Pi\eta_1)(\eta_2(X_1))\wedge X_2- \iota_{\eta_2}([\Pi\eta_1, X_1]_{A})\wedge X_2$$
                  $$-X_1\wedge a(\Pi\eta_1)(\eta_2(X_2))+ X_1\wedge \iota_{\eta_2}([\Pi\eta_1, X_2]_{A}).$$

\noindent By combining the above facts we get the result.
\end{proof}

\begin{corollary}\label{morphism-leibalgd}
Let $(A,[,]_{A},a)$ be a Lie algebroid over $M$ with a Nambu structure $\Pi$ of order $n$. Then the vector bundle map $\Pi_{n-1}\colon\Lambda ^{n-1}A^\ast \longrightarrow A$ given by
$$\alpha \mapsto \Pi \alpha, ~~\alpha \in \Gamma \Lambda^{n-1}A^\ast $$ as defined above is a morphism of Leibniz algebroid. 
\end{corollary}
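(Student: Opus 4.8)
The plan is to unwind what a morphism of Leibniz algebroids over the identity of $M$ requires: it is a vector bundle map that intertwines the anchors and the brackets on sections. The map $\Pi_{n-1}\colon\Lambda^{n-1}A^\ast\to A$ is a vector bundle map over $\mathrm{id}_M$ by construction, and by Proposition~\ref{wade-leibniz-algebroid} the anchor of the Leibniz algebroid $\mathscr{A}=\Lambda^{n-1}A^\ast$ is precisely $a\circ\Pi_{n-1}$, so compatibility with the anchors is automatic. Hence the entire content to be checked is the bracket identity
\[
\Pi_{n-1}\big(\ll\alpha,\beta\gg_\ast\big)=[\Pi_{n-1}(\alpha),\Pi_{n-1}(\beta)]_A,\qquad \alpha,\beta\in\Gamma\Lambda^{n-1}A^\ast,
\]
where the right-hand side makes sense because, for $\alpha\in\Gamma\Lambda^{n-1}A^\ast$, the element $\Pi\alpha=\Pi_{n-1}(\alpha)=\iota_\alpha\Pi$ lies in $\Gamma A$ (and likewise $\Pi\beta$).

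To establish this I would simply combine the Lemma immediately preceding this corollary with the defining property of a Nambu structure. That Lemma, applied with $\eta_1=\alpha$ and $\eta_2=\beta$, yields $[\Pi\alpha,\Pi\beta]_A=[\Pi\alpha,\Pi]_A\beta+\Pi(\mathcal{L}_{\Pi\alpha}\beta)$, while the Nambu condition gives $[\Pi\alpha,\Pi]_A\beta=-\Pi(\iota_{\Pi\beta}d_A\alpha)$. Substituting the latter into the former and using that $\Pi_{n-1}$ is (in particular) $\mathbb{R}$-linear in its argument, one gets
\[
[\Pi\alpha,\Pi\beta]_A=\Pi(\mathcal{L}_{\Pi\alpha}\beta)-\Pi(\iota_{\Pi\beta}d_A\alpha)=\Pi\big(\mathcal{L}_{\Pi\alpha}\beta-\iota_{\Pi\beta}d_A\alpha\big)=\Pi\big(\ll\alpha,\beta\gg_\ast\big),
\]
which is exactly the required identity and completes the verification that $\Pi_{n-1}$ is a Leibniz algebroid morphism.

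There is no real obstacle here: the statement is an immediate repackaging of the preceding Lemma and Proposition~\ref{wade-leibniz-algebroid} together with the definition of a Nambu structure, and the argument is a one-line substitution valid for arbitrary global sections. The only subtlety worth flagging is that $\ll\cdot,\cdot\gg_\ast$ is merely $\mathbb{R}$-bilinear, so one should not attempt to reduce to a local frame and then extend by $C^\infty(M)$-linearity; since the identity above is proved directly for all $\alpha,\beta\in\Gamma\Lambda^{n-1}A^\ast$, this causes no difficulty, and no restriction on the order $n$ is needed.
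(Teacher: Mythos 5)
Your argument is correct and is essentially the paper's own proof: both combine the preceding Lemma ($[\Pi\alpha,\Pi\beta]_A=[\Pi\alpha,\Pi]_A\beta+\Pi(\mathcal{L}_{\Pi\alpha}\beta)$) with the defining Nambu identity $[\Pi\alpha,\Pi]_A\beta=-\Pi(\iota_{\Pi\beta}d_A\alpha)$ and $\mathbb{R}$-linearity of $\Pi_{n-1}$ to obtain $[\Pi\alpha,\Pi\beta]_A=\Pi(\ll\alpha,\beta\gg_\ast)$. Your additional remark on anchor compatibility is a harmless (and slightly more complete) elaboration of what the paper leaves implicit.
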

\begin{proof}
Observe that
$$[\Pi\alpha,\Pi\beta]_{A}= [\Pi\alpha,\Pi]_{A}\beta + \Pi(\iota_{\Pi\beta}d_A\alpha)  + \Pi(\mathcal{L}_{\Pi\alpha}\beta- \iota_{\Pi\beta}d_A\alpha)$$
$$=\Pi(\mathcal{L}_{\Pi\alpha}\beta- \iota_{\Pi\beta}d_A\alpha)= \Pi (\ll\alpha, \beta \gg).$$
\end{proof}

The following is a consequence of the above facts. 
\begin{corollary}
The subbundle $\Pi_{n-1}(\Lambda^{n-1}A^\ast)$ is a Lie sub algebroid of $A$.
\end{corollary}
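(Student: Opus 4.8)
The plan is to combine the previous two corollaries. Corollary~\ref{morphism-leibalgd} tells us that the vector bundle map $\Pi_{n-1}\colon \Lambda^{n-1}A^\ast \longrightarrow A$ is a morphism of Leibniz algebroids, and moreover the computation in its proof shows the stronger fact that for all $\alpha,\beta \in \Gamma\Lambda^{n-1}A^\ast$,
\[
[\Pi\alpha,\Pi\beta]_A = \Pi(\ll\alpha,\beta\gg_\ast),
\]
i.e. the bracket $[\cdot,\cdot]_A$ of two sections lying in the image of $\Pi_{n-1}$ again lies in the image of $\Pi_{n-1}$. By Remark~\ref{orthogonal}(1), $\Pi_{n-1}(\Lambda^{n-1}A^\ast)$ is a vector subbundle of $A$ (of rank $n$); write $\mathcal{D} = \Pi_{n-1}(\Lambda^{n-1}A^\ast)$. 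So the first step is to record that $\Gamma\mathcal{D}$ is closed under the Lie algebroid bracket of $A$: given $X, Y \in \Gamma\mathcal{D}$, locally write $X = \Pi\alpha$, $Y = \Pi\beta$ for suitable $\alpha,\beta$ (using surjectivity of $\Pi_{n-1}$ onto its image, together with a partition of unity to globalize), and conclude $[X,Y]_A = \Pi(\ll\alpha,\beta\gg_\ast) \in \Gamma\mathcal{D}$.

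The second step is to exhibit the induced structure. Restrict the anchor $a$ of $A$ to $\mathcal{D}$, and restrict $[\cdot,\cdot]_A$ to $\Gamma\mathcal{D}\times\Gamma\mathcal{D}$; by the first step this restriction is well defined, and the Jacobi identity, antisymmetry, and the Leibniz rule $[X,fY]_A = f[X,Y]_A + a(X)(f)Y$ are all inherited verbatim from $A$ since they are identities satisfied by all sections. The anchor condition $a([X,Y]_A) = [a(X),a(Y)]_{TM}$ likewise restricts. Hence $(\mathcal{D}, [\cdot,\cdot]_A|_{\mathcal{D}}, a|_{\mathcal{D}})$ is a Lie algebroid, and the inclusion $\mathcal{D}\hookrightarrow A$ is a Lie algebroid morphism over the identity of $M$ — that is, $\mathcal{D}$ is a Lie subalgebroid of $A$ in the sense used in the paper.

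The only genuinely delicate point is checking that a \emph{global} section of $\mathcal{D}$ can be handled by the local formula $X = \Pi\alpha$: since $\Pi_{n-1}$ need not admit a global section, one argues pointwise/locally and patches. Concretely, cover $M$ by opens $U_i$ on which there exist $\alpha_i,\beta_i \in \Gamma(U_i,\Lambda^{n-1}A^\ast)$ with $\Pi\alpha_i = X|_{U_i}$, $\Pi\beta_i = Y|_{U_i}$; on $U_i$ one has $[X,Y]_A|_{U_i} = \Pi(\ll\alpha_i,\beta_i\gg_\ast) \in \Gamma(U_i,\mathcal{D})$; since membership in the subbundle $\mathcal{D}$ is a pointwise condition, $[X,Y]_A \in \Gamma\mathcal{D}$. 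This is routine once Remark~\ref{orthogonal}(1) is invoked to guarantee $\mathcal{D}$ is an honest subbundle, so I expect no real obstacle; the entire statement is essentially a repackaging of the identity already proved in Corollary~\ref{morphism-leibalgd}.
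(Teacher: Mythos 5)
Your proposal is correct and takes essentially the same route as the paper: both rest on the identity $[\Pi\alpha,\Pi\beta]_A=\Pi_{n-1}(\ll\alpha,\beta\gg_\ast)$ already established in Corollary~\ref{morphism-leibalgd}, which shows $\Gamma(\Pi_{n-1}(\Lambda^{n-1}A^\ast))$ is closed under the bracket of $A$. The only difference is that you spell out the local-to-global patching and the inheritance of the Lie algebroid axioms, which the paper leaves implicit.
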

\begin{proof} Let $\alpha,\beta\in \Gamma (\Pi_{n-1}(\Lambda^{n-1}A^\ast))$.\\
Note that
\begin{align*}
 [\Pi_{n-1}\alpha, \Pi_{n-1}\beta]_{A} =& [\Pi\alpha, \Pi]_{A}+ \Pi(\mathcal{L}_{\Pi\alpha}\beta)\\
   =& [\Pi\alpha,\Pi]_{A}\beta+ \Pi(\iota_{\Pi\beta}d_{A}\alpha)+ \Pi(\mathcal{L}_{\Pi\alpha}\beta-\iota_{\Pi\beta}d_{A}\alpha)\\
   =& \Pi(\mathcal{L}_{\Pi\alpha}\beta- \iota_{\Pi\beta}d_{A}\alpha)\\
   =& \Pi_{n-1}(\ll \alpha,\beta\gg_\ast).
\end{align*}
Hence the result.
\end{proof}

Let $(A,[,]_{A},a)$ be a Lie algebroid over $M$ with a Nambu structure $\Pi$ of order $n$. Recall that then $M$ is a Nambu-Poisson manifold of order $n$ with the Nambu-Poisson structure on $M$ being the push-forward $\Lambda = a \circ \Pi$ of $\Pi$ via the anchor $a$. In other words, 
$$\Lambda (df_1, \ldots , df_n) = \{f_1, \ldots , f_n\}_{\Pi} = \Pi (d_Af_1, \ldots , d_Af_n), ~f_1, f_2, \ldots , f_n\in C^{\infty}(M).$$ Moreover, $\Lambda $ is locally decomposable.

We have the following result.

\begin{lemma}\label{3.3} Assume that $\Gamma A^\ast$ is locally spanned by elements of the form $d_{A}f$ where $f\in C^\infty(M)$. Then
$$\mathcal {L}_{a\circ \Pi(\alpha)}\Lambda = (-1)^n(\iota_{d_A\alpha}\Pi)\Lambda~~ \mbox{for all}~~ \alpha \in \Gamma\Lambda^{n-1}A^\ast.$$
\end{lemma}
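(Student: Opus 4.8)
The plan is to reduce the identity on the Lie algebroid $A$ to the already-known modular-type identity on the Nambu--Poisson manifold $M$, using the anchor as a bridge. Recall that by Remark~\ref{associated_nambu_poisson} the push-forward $\Lambda = a\circ\Pi$ is a Nambu--Poisson structure on $M$, and by Proposition~\ref{locally_decomposable} (together with the hypothesis that $\Gamma A^\ast$ is locally spanned by the $d_Af$) both $\Pi$ and $\Lambda$ are locally decomposable. So it suffices to prove the formula locally, and since both sides are $C^\infty(M)$-linear (in fact $\mathbb{R}$-multilinear and alternating) functionals on $(n-1)$-tuples, actually on $n$-tuples of functions after contracting, it is enough to check the equality of the two $n$-vector fields by evaluating on exact one-forms $df_1,\dots,df_n$.

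First I would write $\alpha = d_A g_1 \wedge \cdots \wedge d_A g_{n-1}$ locally (legitimate by the spanning hypothesis, modulo a sum; linearity reduces to this case), so that $\Pi\alpha = \iota_\alpha\Pi$ is a single section of the subbundle $\mathcal D$, and under the anchor $a(\Pi\alpha)$ is, up to sign, the Nambu--Poisson Hamiltonian vector field $X_{g_1,\dots,g_{n-1}}$ on $M$ associated with $g_1,\dots,g_{n-1}$. The point is that $\mathcal L_{a(\Pi\alpha)}$ acting on $\Lambda$ is then exactly the Lie derivative of the Nambu tensor along a Hamiltonian vector field, for which the classical computation (the one underlying the definition of the modular class of a Nambu--Poisson manifold in \cite{ilmp}, \cite{illmp}) gives $\mathcal L_{X_{g_1,\dots,g_{n-1}}}\Lambda$ in terms of $d(\text{something})$ contracted into $\Lambda$. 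The right-hand side $(-1)^n(\iota_{d_A\alpha}\Pi)\Lambda$ should be precisely the transcription of that term: here $d_A\alpha$ is a section of $\Lambda^n A^\ast$, $\iota_{d_A\alpha}\Pi$ is then a function (scalar), and multiplying $\Lambda$ by it gives an $n$-vector field on $M$. I would verify the sign $(-1)^n$ by contracting both sides with $df_1\wedge\cdots\wedge df_n$ and using the definition of $d_A$ on a wedge of exact forms together with the pairing conventions $\langle\iota_\xi\phi,\eta\rangle = (-1)^{[i/2]}\langle\phi,\xi\wedge\eta\rangle$ fixed in \S\ref{prel}.

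Alternatively, and perhaps more cleanly, I would stay on $A$ and use the Cartan-type calculus: write $\mathcal L_{a\circ\Pi\alpha}\Lambda = \mathcal L_{a\circ\Pi\alpha}(a\circ\Pi)$ and relate $\mathcal L$ on $M$ to $\mathcal L$ on $A$ through the Lie algebra homomorphism property $a([X,Y]_A) = [a(X),a(Y)]$, reducing the statement to $\mathcal L_{\Pi\alpha}\Pi = [\Pi\alpha,\Pi]_A$ and then invoking the defining Nambu identity $[\Pi\alpha,\Pi]_A\beta = -\Pi(\iota_{\Pi\beta}d_A\alpha)$ together with the preceding Lemma (the one computing $[\Pi\eta_1,\Pi\eta_2]_A$) to convert $[\Pi\alpha,\Pi]_A$ into the contraction expression. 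The combinatorial bookkeeping — tracking which $(n-1)$-forms get contracted and collecting the signs from the Schouten bracket, the $\iota$ conventions, and the $(-1)^n$ — is the part that needs care.

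The main obstacle I expect is exactly this sign and index bookkeeping: making precise the identification $a(\iota_{d_Ag_1\wedge\cdots\wedge d_Ag_{n-1}}\Pi) = \pm X_{g_1,\dots,g_{n-1}}$ and then matching $(-1)^n(\iota_{d_A\alpha}\Pi)$ against the scalar that appears when one computes $\mathcal L_{X_{g_1,\dots,g_{n-1}}}\Lambda$ for a locally decomposable $\Lambda$. The local decomposability (hence the validity of the hypotheses of \cite{ilmp}) is what makes the Nambu-side computation available at all; once $\Pi|_U = X_1\wedge\cdots\wedge X_n$ with the $X_i$ linearly independent, everything becomes a finite explicit computation in a frame, and the identity follows by comparing coefficients of $X_{i_1}\wedge\cdots\wedge X_{i_n}$ — but the honest accounting of signs there is where an error is most likely to creep in.
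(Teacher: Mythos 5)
Your first route is exactly the paper's proof: it works at a regular point in a Darboux-type chart where $\Lambda = a\circ\Pi$ becomes $\partial/\partial x_1\wedge\cdots\wedge\partial/\partial x_n$, writes $\alpha$ locally as $f\,d_Af_1\wedge\cdots\wedge d_Af_{n-1}$ with companion ordinary form $\alpha_0 = f\,df_1\wedge\cdots\wedge df_{n-1}$, verifies the two identifications $\iota_{\alpha_0}\Lambda = a\circ\Pi(\alpha)$ and $\iota_{d\alpha_0}\Lambda = \iota_{d_A\alpha}\Pi$, and then quotes Equation $(3.3)$ of \cite{ilmp}. The sign $(-1)^n$ and the bookkeeping you worry about are thereby absorbed entirely into the cited Nambu--Poisson identity, so your plan is sound as stated.
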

\begin{proof} It is enough to check the equality for regular points. Let $x\in M$ be a regular point of $\Lambda$. There exists a coordinate chart $(U, x_1, \ldots , x_n, x_{n+1}, \ldots , x_m)$ around $x$ such that $\Lambda/ U$ can be expressed as 
$$\Lambda \mid_U = \frac{\partial}{\partial x_1}\wedge \cdots \frac{\partial}{\partial x_n}.$$ We may assume that on $\alpha/U = fd_Af_1\wedge d_Af_2 \wedge \cdots \wedge d_Af_{n-1},$ where $f,~ f_i \in C^{\infty}(M).$ Consider the $(n-1)$-form $\alpha_0 = fdf_1\wedge \cdots \wedge df_{n-1}$ on $U$.
Note that 
$$\iota_{\alpha_0}\Lambda = f\iota_{(df_1\wedge df_2 \wedge \cdots \wedge df_{n-1})}\Lambda = f\Lambda (df_1\wedge df_2 \wedge \cdots \wedge df_{n-1}).$$
Observe that $\Lambda (df_1\wedge df_2 \wedge \cdots \wedge df_{n-1})(f) = \Lambda (df_1\wedge df_2 \wedge \cdots \wedge df_{n-1}\wedge df) \\= \Pi(d_Af_1\wedge d_Af_2 \wedge \cdots \wedge d_Af_{n-1}\wedge d_Af)\\= (d_Af)\Pi(d_Af_1\wedge d_Af_2 \wedge \cdots \wedge d_Af_{n-1})=a(\Pi(d_Af_1\wedge d_Af_2 \wedge \cdots \wedge d_Af_{n-1}))(f),$ \\for any $f \in C^{\infty}(M).$
Therefore,
$$\iota_{\alpha_0}\Lambda = fa\circ\Pi(d_Af_1\wedge d_Af_2 \wedge \cdots \wedge d_Af_{n-1}) = a\circ \Pi(fd_Af_1\wedge d_Af_2 \wedge \cdots \wedge d_Af_{n-1}) = a\circ \Pi(\alpha).$$
On the other hand,
$$\iota_{d\alpha_0}\Lambda = \Lambda(df\wedge df_1\wedge \cdots \wedge df_{n-1}) = \Pi (d_Af\wedge d_Af_1\wedge \cdots \wedge d_Af_{n-1})=\iota_{(d_Af\wedge d_Af_1\wedge \cdots \wedge d_Af_{n-1})}\Pi =\iota_{d_a\alpha}\Pi.$$ The proof is now complete by \cite[Equation $(3.3)$]{ilmp}.
\end{proof} 
\begin{remark}
Given a multisection $\Pi \in \Gamma(\Lambda^q A)$ of a Lie algebroid $A$ over $M$, we have a $\mathbb R$-multilinear alternating map
$$\Pi : \underbrace{C^{\infty}(M) \times \cdots \times C^{\infty}(M)}_{q\hspace*{1mm}\text{times}} \longrightarrow C^{\infty}(M)$$ defined by 
$$\Pi(f_1, \ldots ,f_q): =  \Pi(d_Af_1, \ldots, d_Af_q)$$ which satisfies the derivation property in each argument. The converse is true when $\Gamma A^\ast$ is locally spanned by elements of the form $d_Af,~ f\in  C^{\infty}(M).$ Because, in that case, we may define a multisection $\Pi \in \Gamma (\Lambda^q A)$ by 
$$\langle \Pi, d_A f_1 \wedge \cdots \wedge d_Af_q\rangle : = \Pi(f_1, \ldots ,f_q).$$ As in the proof of Lemma $1.2.2$ \cite{dz}, we need only to check that the value of $\Pi(f_1, \ldots ,f_q)$ at a point $x \in M$ depends only on the value of $d_Af_1, \ldots ,d_Af_q$ at $x$. Note that for any $f \in C^\infty(M),$ $d_Af$ is given by $d_Af(X) = a(X)(f), ~X\in \Gamma A.$ Thus, if $d_Af$ is zero at a point $x\in M$, then on a neighbourhood of $x$, we may express $f$ in the form $f = c +\sum_ig_ih_i$ where $c$ is a constant and $g_i,~h_i$ are smooth functions which vanish at $x$. The rest of the proof is similar to the proof of \cite[Lemma $1.2.2$]{dz}.
\end{remark}

Let $(A,[,]_{A},a)$ be a Lie algebroid over a smooth manifold $M$ equipped with a Nambu structure $\Pi$ of order $n$, $n\leq m =~rank ~A$. Assume that the vector bundle $A$ is orientable. Let $\nu \in \Gamma(\Lambda^{m} A^\ast)$ be the nowhere vanishing form representing the orientation. We will call it the {\it orientation form}. Further assume that  $\Gamma A^\ast$ is locally spanned by elements of the form $d_{A}f$
where $f \in C^\infty(M)$ and $d_{A}$ denote coboundary operator for the Lie algebroid cohomology of $A$ with trivial coefficients.

Consider the mapping
\[
\mathcal{M}^{\nu}\colon \underbrace{C^\infty(M)\times\cdots\times C^\infty(M)}_{n-1\hspace*{1mm}\text{times}}\longrightarrow
C^\infty(M)
\]
\noindent given by
\[
\mathcal{L}_{\Pi(d_{A}f_1\wedge\cdots\wedge d_{A}f_{n-1})}\nu
= \mathcal{M}^{\nu}(f_1,\ldots,f_{n-1})\nu
\]
 \noindent where $f_1,\ldots,f_{n-1}\in C^\infty(M)$ for all $i=1,\ldots,n-1$. 
Then
$\mathcal{M}^{\nu}$ defines an $(n-1)$ $\mathbb R$-multi-linear alternating map and satisfies Leibniz rule in each argument with respect to usual product of functions. Then by the above remark, $\mathcal{M}^{\nu}$ arises from an $(n-1)$-multisection $\widetilde{\mathcal{M}}_\Pi^\nu \in\Gamma (\Lambda^{n-1}A),$ so that 
\[
 \mathcal{M}^\nu(f_1,\ldots,f_{n-1})=
\langle\widetilde{\mathcal{M}}_\Pi^\nu,d_{A}f_1\wedge\cdots\wedge d_{A}f_{n-1}\rangle.
\]

\begin{lemma}\label{formula-lie-derivative}
 For all $\alpha\in\Gamma (\Lambda^{n-1}A)$,
\begin{equation}\label{relation_L_i_modular_class}
 \mathcal{L}_{\Pi\alpha}\nu=
\big(\iota_\alpha\widetilde{\mathcal{M}}_\Pi^\nu+ (-1)^{n-1}\iota_{d_{A}\alpha}\Pi \big)\nu
\end{equation}
\end{lemma}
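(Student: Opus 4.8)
The plan is to reduce the identity to a local computation at regular points and then verify it in the model coordinate chart supplied by local decomposability. First I would note that both sides of (\ref{relation_L_i_modular_class}) are $\mathbb R$-linear in $\alpha$, satisfy a Leibniz-type rule over $C^\infty(M)$, and are local operators, so by the standing hypothesis that $\Gamma A^\ast$ is locally spanned by elements $d_Af$ it suffices to check the equality for $\alpha = f\,d_Af_1\wedge\cdots\wedge d_Af_{n-1}$ with $f,f_i\in C^\infty(M)$, in a neighbourhood of an arbitrary point. Since $\Pi$ is a Nambu structure with $n\geq 3$, Proposition \ref{locally_decomposable} guarantees $\Pi(d_Af_1\wedge\cdots\wedge d_Af_{n-1})$ is locally decomposable, so near a regular point I may choose, as in Remark \ref{linearly_independent} and the proof of Lemma \ref{subbundle}, a local frame $X_1,\dots,X_m$ of $A$ with $\Pi = X_1\wedge\cdots\wedge X_n$ on the neighbourhood; singular points are handled by continuity once the formula holds on the (dense) regular locus, or directly since both sides vanish where $\Pi\alpha$ and $d_A\alpha$ lose enough rank.

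Next I would unwind the definition of $\widetilde{\mathcal M}_\Pi^\nu$: by construction $\mathcal L_{\Pi(d_Af_1\wedge\cdots\wedge d_Af_{n-1})}\nu = \langle\widetilde{\mathcal M}_\Pi^\nu, d_Af_1\wedge\cdots\wedge d_Af_{n-1}\rangle\,\nu$, which is exactly $(\iota_{d_Af_1\wedge\cdots\wedge d_Af_{n-1}}\widetilde{\mathcal M}_\Pi^\nu)\nu$ up to the sign convention fixed in the pairing $\langle\iota_\xi\phi,\eta\rangle = (-1)^{[i/2]}\langle\phi,\xi\wedge\eta\rangle$. So for the ``decomposable'' part of $\alpha$ the claimed identity (\ref{relation_L_i_modular_class}) reads
\[
\mathcal L_{\Pi(f\,d_Af_1\wedge\cdots\wedge d_Af_{n-1})}\nu
= \big(\iota_{f\,d_Af_1\wedge\cdots\wedge d_Af_{n-1}}\widetilde{\mathcal M}_\Pi^\nu
+ (-1)^{n-1}\iota_{d_A(f\,d_Af_1\wedge\cdots\wedge d_Af_{n-1})}\Pi\big)\nu,
\]
and the point is to move the scalar $f$ past the Lie derivative. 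I would use the Cartan-type formula $\mathcal L_{fY} = f\mathcal L_Y + d_Af\wedge\iota_Y$ valid for the Lie-algebroid calculus (this is one of the identities among $\mathcal L_X,\iota_X,d_A$ referenced after Definition \ref{lie-L-i}), applied with $Y = \Pi(d_Af_1\wedge\cdots\wedge d_Af_{n-1})$. Then $\mathcal L_{\Pi(f\,d_Af_1\wedge\cdots)}\nu = f\,\mathcal L_{\Pi(d_Af_1\wedge\cdots)}\nu + d_Af\wedge\iota_{\Pi(d_Af_1\wedge\cdots)}\nu = f\,\mathcal M^\nu(f_1,\dots,f_{n-1})\nu + d_Af\wedge\iota_{\Pi(d_Af_1\wedge\cdots)}\nu$. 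The first term is precisely $\langle\widetilde{\mathcal M}_\Pi^\nu, f\,d_Af_1\wedge\cdots\wedge d_Af_{n-1}\rangle\nu = (\iota_\alpha\widetilde{\mathcal M}_\Pi^\nu)\nu$, so it remains to identify the residual term $d_Af\wedge\iota_{\Pi(d_Af_1\wedge\cdots)}\nu$ with $(-1)^{n-1}(\iota_{d_A\alpha}\Pi)\nu$. Since $d_A\alpha = d_Af\wedge d_Af_1\wedge\cdots\wedge d_Af_{n-1}$ (the $d_Af_i$ are $d_A$-closed), $\iota_{d_A\alpha}\Pi = \iota_{d_Af\wedge d_Af_1\wedge\cdots\wedge d_Af_{n-1}}\Pi$ is a function (a top-degree contraction into the $n$-multisection $\Pi$), and I would check in the $X_1,\dots,X_m$ frame, writing $d_Af = \sum_j a_j X_j^\ast$, that multiplying $\nu$ by this function reproduces $d_Af\wedge\iota_{\Pi(d_Af_1\wedge\cdots)}\nu$ up to the sign $(-1)^{n-1}$; this is a bookkeeping exercise in the contraction-sign conventions already recorded in the Preliminaries, and is essentially the Lie-algebroid analogue of \cite[Equation $(3.3)$]{ilmp} used in Lemma \ref{3.3}.

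The main obstacle I anticipate is purely the sign and convention tracking: the operators $\iota_\xi$ on forms, $\iota_\phi$ on multisections, and the pairing $\langle\,,\rangle$ each carry $(-1)^{[i/2]}$-type signs, and reconciling the $(-1)^{n-1}$ in (\ref{relation_L_i_modular_class}) with the $\mathcal L_{fY}$ expansion and the definition of $\widetilde{\mathcal M}_\Pi^\nu$ requires care — this is exactly why I would pin everything down in an explicit local frame where $\nu$, $\Pi$, the $X_i^\ast$, and the $d_Af_i$ are all written out, rather than arguing abstractly. No new conceptual input is needed beyond Proposition \ref{locally_decomposable}, the frame from Lemma \ref{subbundle}, and the standard Cartan calculus for Lie algebroids; the $C^\infty(M)$-Leibniz property of both sides in $\alpha$ lets me reduce to a single decomposable test element, after which the computation closes.
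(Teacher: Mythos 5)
Your overall strategy coincides with the paper's: both reduce to $\alpha = f\,d_Af_1\wedge\cdots\wedge d_Af_{n-1}$, expand $\mathcal{L}_{\Pi\alpha}\nu$ via $\mathcal{L}_{fY}=f\mathcal{L}_{Y}+d_Af\wedge\iota_{Y}$, and identify $f\,\mathcal{M}^\nu(f_1,\ldots,f_{n-1})\nu$ with $(\iota_\alpha\widetilde{\mathcal{M}}_\Pi^\nu)\nu$. Where you diverge is in the one step that carries the actual content of the lemma: identifying the residual term $d_Af\wedge\iota_{\Pi(d_Af_1\wedge\cdots\wedge d_Af_{n-1})}\nu$ with $(-1)^{n-1}(\iota_{d_A\alpha}\Pi)\nu$. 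You defer this to a local-frame sign check, whereas the paper dispatches it frame-free: since $\nu$ has top degree, $d_Af\wedge\nu=0$, so applying the derivation rule $\iota_X(\omega\wedge\tau)=\iota_X(\omega)\wedge\tau+(-1)^{\deg\omega}\,\omega\wedge\iota_X(\tau)$ with $X=\Pi(d_Af_1\wedge\cdots\wedge d_Af_{n-1})$ and $\omega=d_Af$ yields $d_Af\wedge\iota_X\nu=(\iota_Xd_Af)\,\nu=\Pi(d_Af_1\wedge\cdots\wedge d_Af_{n-1}\wedge d_Af)\,\nu$, and reordering the arguments produces exactly the sign $(-1)^{n-1}$. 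This is shorter and sidesteps the $(-1)^{[i/2]}$ convention-tracking you flag as the main risk. Relatedly, your appeal to Proposition \ref{locally_decomposable}, the frame of Lemma \ref{subbundle}, and a regular-versus-singular case split is unnecessary overhead: the paper's computation is a pointwise algebraic identity valid at every point, with no decomposability of $\Pi$ needed (and the continuity fallback you mention is not automatic, since the regular locus of a Nambu structure need not be dense). Your plan would close, but only once the deferred bookkeeping is actually carried out; the $d_Af\wedge\nu=0$ trick is the cleaner way to do it.
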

\begin{proof}
We will prove the equation for $\alpha$ of the form
$$fd_{A}f_1\wedge\cdots\wedge d_{A}f_{n-1},~~ \mbox{where}~~f,f_1,\ldots,f_{n-1}\in C^\infty(M).$$
Consider 
\begin{align*}
\mathcal{L}_{\Pi\alpha}\nu
&= \mathcal{L}_{\Pi(f d_{A}f_1\wedge\cdots\wedge d_{A}f_{n-1})}\nu\\
&= f\mathcal{L}_{\Pi(d_{A}f_1\wedge\cdots\wedge d_{A}f_{n-1})}\nu
+ d_{A}f\wedge \iota_{\Pi(d_{A}f_1\wedge\cdots\wedge d_{A}f_{n-1})}\nu\\
&= f \mathcal{M}^\nu(f_1,\ldots,f_{n-1})\nu
+ d_{A}f\wedge \iota_{\Pi(d_{A}f_1\wedge\cdots\wedge d_{A}f_{n-1})}\nu
\end{align*}
So, we have
\begin{equation}\label{l-pi-1}
\mathcal{L}_{\Pi\alpha}\nu
 = f \mathcal{M}^\nu(f_1,\ldots,f_{n-1})\nu
+ d_{A}f\wedge \iota_{\Pi(d_{A}f_1\wedge\cdots\wedge d_{A}f_{n-1})}\nu.
\end{equation}
\noindent Now, 
\begin{align*}
f \mathcal{M}^\nu(f_1,\ldots,f_{n-1})
&= f \langle\widetilde{\mathcal{M}}_\Pi^\nu,d_{A}f_1\wedge\cdots\wedge d_{A}f_{n-1}\rangle\\
&= f\iota_{d_{A}f_1\wedge\cdots\wedge d_{A}f_{n-1}} \widetilde{\mathcal{M}}_\Pi^\nu\\
&= \iota_{fd_{A}f_1\wedge\cdots\wedge d_{A}f_{n-1}} \widetilde{\mathcal{M}}_\Pi^\nu\\
&= \iota_\alpha \widetilde{\mathcal{M}}_\Pi^\nu, 
\end{align*}
that is,
\begin{equation}\label{l-pi-2}
 f \mathcal{M}^\nu(f_1,\ldots,f_{n-1})= \iota_\alpha \widetilde{\mathcal{M}}_\Pi^\nu.
\end{equation}
\noindent
Note that $d_Af \wedge \nu = 0$ and hence $\iota_{\Pi(d_{A}f_1\wedge\cdots\wedge d_{A}f_{n-1})}(d_Af \wedge \nu)=0.$ Since
\[
 \iota_X(\omega\wedge\tau)= \iota_X(\omega)\wedge\tau+ (-1)^k\omega\wedge \iota_X(\tau),~~\omega \in \Gamma (\Lambda^kA^\ast),
\]
we obtain
\begin{align*}
d_{A}f\wedge \iota_{\Pi(d_{A}f_1\wedge\cdots\wedge d_{A}f_{n-1})}\nu
&= \big( \iota_{\Pi(d_{A}f_1\wedge\cdots\wedge d_{A}f_{n-1})} d_{A}f \big)\wedge \nu\\
&= \big( \iota_{\Pi(d_{A}f_1\wedge\cdots\wedge d_{A}f_{n-1})} d_{A}f \big) \nu\\
&= \langle \Pi(d_{A}f_1\wedge\cdots\wedge d_{A}f_{n-1}), d_{A}f \rangle \nu\\
&= \langle \iota_{(d_{A}f_1\wedge\cdots\wedge d_{A}f_{n-1})}\Pi, d_{A}f \rangle \nu\\
&= \Pi(d_{A}f_1\wedge\cdots\wedge d_{A}f_{n-1}\wedge d_{A}f)\nu\\
&= (-1)^{n-1} \Pi(d_{A}f\wedge d_{A}f_1\wedge\cdots\wedge d_{A}f_{n-1})\nu\\
&= (-1)^{n-1} \Pi(d_{A}(f\wedge d_{A}f_1\wedge\cdots\wedge d_{A}f_{n-1}))\nu\\
&= (-1)^{n-1} \iota_{d_{A}(f\wedge d_{A}f_1\wedge\cdots\wedge d_{A}f_{n-1})}\Pi\nu.
\end{align*}
Hence, we get
\begin{equation}\label{l-pi-3}
 d_{A}f\wedge \iota_{\Pi(d_{A}f_1\wedge\cdots\wedge d_{A}f_{n-1})}\nu
= (-1)^{n-1} i_{d_{A}\alpha}\Pi\nu
\end{equation}
\noindent Thus $(3.1)$ follows from $(3.2)-(3.4)$.
\end{proof}

We need the following lemma.
\begin{lemma}
For all $\alpha,\beta\in \Gamma ( \Lambda^{n-1}A)$,
\begin{equation}\label{crucial}
 \iota_{d_A\ll \alpha,\beta\gg_\ast}\Pi=
((a\circ\Pi)(\alpha))\iota_{d_A\beta}\Pi- ((a\circ\Pi)(\beta))\iota_{d_A\alpha}\Pi.
\end{equation}
\end{lemma}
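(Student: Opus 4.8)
The plan is to reduce the identity to a statement purely about the Nambu–Poisson structure $\Lambda = a\circ\Pi$ on $M$, where the analogous formula is already available in the literature (\cite{ilmp}), and then transport it back along $\Pi$. First I would observe that both sides of \eqref{crucial} are $C^\infty(M)$-multilinear in $\alpha$ and $\beta$ separately (using the Leibniz-type rules for $\iota$, $\mathcal L$, $d_A$ recalled in Section~\ref{prel}, together with the fact that $\ll\cdot,\cdot\gg_\ast$ is a Leibniz bracket whose anchor is $a\circ\Pi_{n-1}$), so it suffices to verify it for $\alpha,\beta$ of the special local form $\alpha = f\, d_Af_1\wedge\cdots\wedge d_Af_{n-1}$, $\beta = g\, d_Ag_1\wedge\cdots\wedge d_Ag_{n-1}$ with $f,g,f_i,g_i\in C^\infty(M)$, which is legitimate because $\Gamma A^\ast$ is locally spanned by elements $d_Af$. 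I would also first dispatch the case $n=2$ (where $\alpha,\beta\in\Gamma A^\ast$ and the bracket is just the standard Courant-type bracket), since the inductive/product arguments in the preceding lemmas of the paper are set up exactly this way.

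The key step is the dictionary between $A$-data and $M$-data: to a section $\alpha_0 = f\,df_1\wedge\cdots\wedge df_{n-1}\in\Omega^{n-1}(M)$ built from the same functions one associates $\alpha = f\,d_Af_1\wedge\cdots\wedge d_Af_{n-1}\in\Gamma\Lambda^{n-1}A^\ast$, and the computations in the proof of Lemma~\ref{3.3} give precisely $\iota_{\alpha_0}\Lambda = a\circ\Pi(\alpha)$ and $\iota_{d\alpha_0}\Lambda = \iota_{d_A\alpha}\Pi$ (the latter as a function on $M$). I would next check that this correspondence is compatible with brackets: the Leibniz bracket $\ll\alpha_0,\beta_0\gg$ on $\Omega^{n-1}(M)$ attached to the Nambu–Poisson manifold $(M,\Lambda)$ as in \cite{ilmp} corresponds, under $\alpha_0\mapsto\alpha$, to $\ll\alpha,\beta\gg_\ast$ modulo $\ker\Pi_{n-1}$ — equivalently $\iota_{d_A\ll\alpha,\beta\gg_\ast}\Pi = \iota_{d\ll\alpha_0,\beta_0\gg}\Lambda$ — which follows from the defining formula $\ll\alpha,\beta\gg_\ast = \mathcal L_{\Pi\alpha}\beta - \iota_{\Pi\beta}d_A\alpha$ and the identities for $\mathcal L$, $\iota$, $d_A$ applied to the special form of $\alpha,\beta$, together with the Nambu condition $[\Pi\alpha,\Pi]_A\beta = -\Pi(\iota_{\Pi\beta}d_A\alpha)$. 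Once these three correspondences are in place, \eqref{crucial} is exactly the image under the dictionary of the already-known relation for Nambu–Poisson manifolds, namely $\iota_{d\ll\alpha_0,\beta_0\gg}\Lambda = (\iota_{\alpha_0}\Lambda\text{-action})\,\iota_{d\beta_0}\Lambda - (\iota_{\beta_0}\Lambda\text{-action})\,\iota_{d\alpha_0}\Lambda$ from \cite{ilmp}; I would invoke that result and conclude.

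Alternatively — and this may be cleaner to write — one can prove \eqref{crucial} directly by expanding $\ll\alpha,\beta\gg_\ast$ on the special form of $\alpha,\beta$. Writing $X := \Pi\alpha = \Pi_{n-1}(\alpha)\in\Gamma A$ and $Y := \Pi\beta\in\Gamma A$, one has $\mathcal L_{X}\beta = \mathcal L_X(g)\,d_Ag_1\wedge\cdots\wedge d_Ag_{n-1} + g\sum_i d_Ag_1\wedge\cdots\wedge\mathcal L_X(d_Ag_i)\wedge\cdots$, and $\mathcal L_X(d_Ag_i) = d_A(\mathcal L_X g_i) = d_A(a(X)g_i)$; applying $\iota_{d_A(\,\cdot\,)}\Pi$ and using that $\Pi$ kills any wedge with a repeated $d_Ag_i$ (after the $d_A$ is applied, cross terms collapse), the sum over the $\mathcal L_X(d_Ag_i)$ terms telescopes against the Fundamental Identity for the Nambu bracket, leaving exactly the two anchor terms on the right-hand side. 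The term $\iota_{\Pi\beta}d_A\alpha$ contributes symmetrically with the opposite sign. The main obstacle will be bookkeeping: controlling the signs $(-1)^{[\,i/2\,]}$, $(-1)^{n-i}$ etc. from the contraction conventions of Section~\ref{prel}, and making sure the "repeated-differential vanishes" reductions are applied only \emph{after} $d_A$ is distributed. Reducing to $n=2$ by the same induction used in Lemma~\ref{3.3}'s neighbours, and then to a single product by multilinearity, keeps this manageable.
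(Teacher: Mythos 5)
Your first strategy is sound and lands close to the paper's own proof: both arguments push the identity down to the induced Nambu--Poisson structure $\Lambda=a\circ\Pi$ on $M$ and exploit the correspondences established in Lemma \ref{3.3}. The difference is in the final step. You transport the bracket itself to $\Omega^{n-1}(M)$ (checking $\iota_{d_A\ll\alpha,\beta\gg_\ast}\Pi=\iota_{d\ll\alpha_0,\beta_0\gg}\Lambda$ on special local forms) and then quote the corresponding identity for Nambu--Poisson manifolds from \cite{ilmp}. The paper instead stays with general $\alpha,\beta$ throughout: it applies Lemma \ref{3.3} to $\alpha$, to $\beta$ and to $\ll\alpha,\beta\gg_\ast$, uses $a\circ\Pi(\ll\alpha,\beta\gg_\ast)=[a\circ\Pi(\alpha),a\circ\Pi(\beta)]$ from Corollary \ref{morphism-leibalgd}, expands $\mathcal{L}_{[X,Y]}=\mathcal{L}_X\mathcal{L}_Y-\mathcal{L}_Y\mathcal{L}_X$ against $(\iota_{d_A\beta}\Pi)\Lambda$ and $(\iota_{d_A\alpha}\Pi)\Lambda$ via the Leibniz rule, and cancels; this is self-contained, needs no reduction to special forms and no verification that the two Leibniz brackets match under the dictionary. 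Your route buys a shorter write-up at the price of that extra compatibility check and an external citation. Two corrections to your setup: first, neither side of \eqref{crucial} is $C^\infty(M)$-multilinear in $\alpha$ (indeed $d_A(f\alpha)=d_Af\wedge\alpha+f\,d_A\alpha$ and $\ll f\alpha,\beta\gg_\ast\neq f\ll\alpha,\beta\gg_\ast$), so the reduction to $\alpha=f\,d_Af_1\wedge\cdots\wedge d_Af_{n-1}$ must be justified by $\mathbb{R}$-bilinearity and locality of both sides --- which does suffice, since locally every element of $\Gamma(\Lambda^{n-1}A^\ast)$ is a finite sum of such terms --- rather than by tensoriality. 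Second, there is no induction on $n$ available here: the order of the Nambu structure is fixed ($n\geq 3$ throughout this part of the paper), and the ``reduce to $n=2$'' device used for the Gerstenhaber-bracket lemma does not apply to this identity. Your alternative direct-expansion strategy is not what the paper does and, as you yourself anticipate, would be dominated by sign bookkeeping; the Lie-derivative commutator argument avoids all of it.
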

\begin{proof}
Let $\Lambda$ be the Nambu-Poisson structure on $M$ induced by $\Pi$. Then, by Lemma \ref{3.3} we have
\begin{align*}
 (\iota_{d_A \ll \alpha,\beta\gg_\ast}\Pi)\Lambda
&= (-1)^n \mathcal{L}_{(a\circ\Pi)(\ll \alpha,\beta\gg_\ast )}\Lambda\\
&= (-1)^n \mathcal{L}_{[(a\circ\Pi)(\alpha),(a\circ\Pi)(\beta)]_A}\Lambda\\
&= (-1)^n \mathcal{L}_{(a\circ\Pi)(\alpha)}\mathcal{L}_{(a\circ\Pi)(\beta)}\Lambda
   - (-1)^n \mathcal{L}_{(a\circ\Pi)(\beta)}\mathcal{L}_{(a\circ\Pi)(\alpha)}\Lambda\\
&= (-1)^n \mathcal{L}_{(a\circ\Pi)(\alpha)}((-1)^n (\iota_{d_A\beta}\Pi)\Lambda)
   - (-1)^n \mathcal{L}_{(a\circ\Pi)(\beta)}((-1)^n (\iota_{d_A\alpha}\Pi)\Lambda)\\
&= \mathcal{L}_{(a\circ\Pi)(\alpha)}((\iota_{d_A\beta}\Pi)\Lambda)
   - \mathcal{L}_{(a\circ\Pi)(\beta)}((\iota_{d_A\alpha}\Pi)\Lambda)\\
&= (\iota_{d_A\beta}\Pi)\mathcal{L}_{(a\circ\Pi)(\alpha)}\Lambda+ ((a\circ\Pi)(\alpha))(\iota_{d_A\beta}\Pi)\Lambda\\
&   - (\iota_{d_A\alpha}\Pi)\mathcal{L}_{(a\circ\Pi)(\beta)}\Lambda- ((a\circ\Pi)(\beta))(\iota_{d_A\alpha}\Pi)\Lambda\\
&= (\iota_{d_A\beta}\Pi) (-1)^n (\iota_{d_A\alpha}\Pi)\Lambda+ ((a\circ\Pi)(\alpha))(\iota_{d_A\beta}\Pi)\Lambda\\
&   - (\iota_{d_A\alpha}\Pi)(-1)^n (\iota_{d_A\beta}\Pi)\Lambda- ((a\circ\Pi)(\beta))(\iota_{d_A\alpha}\Pi)\Lambda\\
&=  ((a\circ\Pi)(\alpha))(\iota_{d_A\beta}\Pi)\Lambda- ((a\circ\Pi)(\beta))(\iota_{d_A\alpha}\Pi)\Lambda.
\end{align*}
\noindent So, we conclude that
\[
 \iota_{d_A\ll \alpha,\beta\gg_\ast}\Pi=
((a\circ\Pi)(\alpha))\iota_{d_A\beta}\Pi- ((a\circ\Pi)(\beta))\iota_{d_A\alpha}\Pi.
\]
\end{proof}

\begin{thm}\label{nambu-modular-class}
(1) The mapping
$ \mathcal{M}_\Pi^\nu\colon\Gamma (\Lambda^{n-1}A)\longrightarrow C^\infty(M)$ given by
$\alpha\mapsto \iota_\alpha \widetilde{\mathcal{M}}_\Pi^\nu$
defines a $1$-cocycle in the cohomology complex associated to the Leibniz algebroid
$$(\mathscr{A}=\Lambda^{n-1}A^\ast,\ll,\gg_\ast,a\circ\Pi_{n-1}).$$
(2) The cohomology class $\mathcal{M}_\Pi= [\mathcal{M}_\Pi^\nu]\in \mathcal{H}L^1(\mathscr{A})$ does not depend on
the chosen orientation form.
\end{thm}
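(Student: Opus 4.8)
The plan is to establish (1) and (2) in turn, drawing on Lemma~\ref{formula-lie-derivative} and the bracket identity \eqref{crucial} as the two main computational inputs.

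\textbf{Part (1).} I would verify directly that $\mathcal{M}_\Pi^\nu$ is a $1$-cocycle, i.e.\ that $d_{\mathscr{A}}\mathcal{M}_\Pi^\nu = 0$, where $d_{\mathscr{A}}$ is the coboundary operator of the Leibniz algebroid $(\mathscr{A}=\Lambda^{n-1}A^\ast,\ll,\gg_\ast,a\circ\Pi_{n-1})$ with trivial coefficients. Unwinding Definition~\ref{leibniz-co} for $n=1$, this amounts to showing, for all $\alpha,\beta\in\Gamma(\Lambda^{n-1}A)$,
\[
(a\circ\Pi_{n-1})(\alpha)\big(\mathcal{M}_\Pi^\nu(\beta)\big) - (a\circ\Pi_{n-1})(\beta)\big(\mathcal{M}_\Pi^\nu(\alpha)\big) = \mathcal{M}_\Pi^\nu(\ll\alpha,\beta\gg_\ast).
\]
The strategy is to express every term via the Lie derivative on the orientation form $\nu$, using the defining relation $\mathcal{M}_\Pi^\nu(\alpha)\nu = \iota_\alpha\widetilde{\mathcal{M}}_\Pi^\nu\,\nu$. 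Rewriting \eqref{relation_L_i_modular_class} as $\iota_\alpha\widetilde{\mathcal{M}}_\Pi^\nu\,\nu = \mathcal{L}_{\Pi\alpha}\nu - (-1)^{n-1}(\iota_{d_A\alpha}\Pi)\nu$, I would apply $\mathcal{L}_{\Pi\alpha}\mathcal{L}_{\Pi\beta}\nu - \mathcal{L}_{\Pi\beta}\mathcal{L}_{\Pi\alpha}\nu = \mathcal{L}_{[\Pi\alpha,\Pi\beta]_A}\nu = \mathcal{L}_{\Pi(\ll\alpha,\beta\gg_\ast)}\nu$ (the last equality by Corollary~\ref{morphism-leibalgd}, since $\Pi_{n-1}$ is a Leibniz-algebroid morphism and the anchor of $\mathscr{A}$ is $a\circ\Pi_{n-1}$). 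Expanding the left side with Lemma~\ref{formula-lie-derivative} and the Leibniz rule for $\mathcal{L}$ acting on $(\text{function})\cdot\nu$, the $\widetilde{\mathcal{M}}_\Pi^\nu$-terms that survive assemble into $\big[(a\circ\Pi)(\alpha)(\mathcal{M}_\Pi^\nu(\beta)) - (a\circ\Pi)(\beta)(\mathcal{M}_\Pi^\nu(\alpha))\big]\nu$, while the remaining $\iota_{d_A(-)}\Pi$-terms are controlled precisely by the identity \eqref{crucial}; the two sides then match after collecting signs. It suffices to check this on $\alpha,\beta$ of the local form $f\,d_Af_1\wedge\cdots\wedge d_Af_{n-1}$, which is legitimate because $\Gamma A^\ast$ is locally spanned by such exact elements.

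\textbf{Part (2).} For independence of the orientation form, let $\nu' = e^g\nu$ be another nowhere-vanishing section of $\Lambda^m A^\ast$, where $g\in C^\infty(M)$ (any two orientation forms differ by a nowhere-vanishing function, and since we only care about the cohomology class we may absorb a sign and write the positive factor as $e^g$). Using $\mathcal{L}_{\Pi\alpha}(e^g\nu) = e^g\big(\mathcal{L}_{\Pi\alpha}\nu + (a\circ\Pi)(\alpha)(g)\,\nu\big)$, which follows from the derivation property of $\mathcal{L}_{\Pi\alpha}$ and $\mathcal{L}_{\Pi\alpha}g = a(\Pi\alpha)(g)$, one reads off from the defining relation for $\widetilde{\mathcal{M}}_\Pi^{\nu'}$ that
\[
\mathcal{M}_\Pi^{\nu'}(\alpha) = \mathcal{M}_\Pi^\nu(\alpha) + (a\circ\Pi_{n-1})(\alpha)(g), \qquad \alpha\in\Gamma(\Lambda^{n-1}A).
\]
The correction term $\alpha\mapsto (a\circ\Pi_{n-1})(\alpha)(g) = a^0_{\Pi_{n-1}(\alpha)}(g)$ is exactly $d_{\mathscr{A}}g$, the Leibniz coboundary of the $0$-cochain $g\in C^\infty(M) = C^0(\mathscr{A})$ with respect to the trivial representation. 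Hence $\mathcal{M}_\Pi^{\nu'} - \mathcal{M}_\Pi^\nu \in \mathcal{B}^1(\mathscr{A})$, so $[\mathcal{M}_\Pi^{\nu'}] = [\mathcal{M}_\Pi^\nu]$ in $\mathcal{H}L^1(\mathscr{A})$.

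\textbf{Main obstacle.} The routine part is Part (2) and the bookkeeping of how $\mathcal{L}$ interacts with multiplication by a function on $\nu$. The genuine work is the cocycle identity in Part (1): one must carefully expand $\mathcal{L}_{\Pi\alpha}\mathcal{L}_{\Pi\beta}\nu$ so that the ``anomalous'' $\iota_{d_A(-)}\Pi$ contributions appearing through Lemma~\ref{formula-lie-derivative} cancel against $\iota_{d_A\ll\alpha,\beta\gg_\ast}\Pi$ via \eqref{crucial}, and track every sign $(-1)^n$, $(-1)^{n-1}$ correctly. I expect this sign-and-term matching to be the crux; reducing to exact local generators and invoking \eqref{crucial} is what makes it go through.
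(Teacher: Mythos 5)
Your proposal is correct and follows essentially the same route as the paper: part (1) is proved by rewriting $\iota_{\ll\alpha,\beta\gg_\ast}\widetilde{\mathcal{M}}_\Pi^\nu$ via Lemma~\ref{formula-lie-derivative}, the morphism property of $\Pi_{n-1}$ (Corollary~\ref{morphism-leibalgd}), the commutator identity for Lie derivatives, and the cancellation of the $\iota_{d_A(-)}\Pi$ terms through \eqref{crucial}; part (2) is the same computation of $\mathcal{L}_{\Pi\alpha}(f\nu)$ exhibiting the difference as $d_{\mathscr{A}}(\ln f)$ (your $g=\ln f$). No substantive differences.
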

\begin{proof}(1) We need to prove that for all $\alpha,\beta\in \Gamma(\mathscr{A}),
d_{\mathscr{A}}\mathcal{M}_\Pi^\nu(\alpha,\beta)= 0$. \\
Recall that for all $\alpha,\beta\in \Gamma(\mathscr{A}),$
\begin{align*}
 (d_{\mathscr{A}}\mathcal{M}_\Pi^\nu)(\alpha,\beta)
&= (a\circ\Pi)(\alpha)\mathcal{M}_\Pi^\nu(\beta)- (a\circ\Pi)(\beta)\mathcal{M}_\Pi^\nu(\alpha)
- \mathcal{M}_\Pi^\nu(\ll \alpha,\beta\gg_\ast)\\
&= (a\circ\Pi)(\alpha) \iota_\beta\widetilde{\mathcal{M}}_\Pi^\nu- (a\circ\Pi)(\beta)\iota_\alpha\widetilde{\mathcal{M}}_\Pi^\nu
- \iota_{\ll \alpha,\beta\gg_\ast}\widetilde{\mathcal{M}}_\Pi^\nu.
\end{align*}
So it is enough to prove that
\[
\iota_{\ll \alpha,\beta\gg_\ast}\widetilde{\mathcal{M}}_\Pi^\nu
= (a\circ\Pi)(\alpha) \iota_\beta\widetilde{\mathcal{M}}_\Pi^\nu- (a\circ\Pi)(\beta)\iota_\alpha\widetilde{\mathcal{M}}_\Pi^\nu.
\]
\noindent By Lemma \ref{formula-lie-derivative}, Corollary \ref{morphism-leibalgd} and Cartan formula, we get
\begin{align*}
(\iota_{\ll \alpha,\beta\gg_\ast}\widetilde{\mathcal{M}}_\Pi^\nu)\nu
&= \mathcal{L}_{\Pi\ll\alpha,\beta\gg_\ast}\nu+ (-1)^n(\iota_{d_A\ll\alpha,\beta\gg_\ast}\Pi)\nu\\
&= \mathcal{L}_{[\Pi\alpha,\Pi\beta]_A}\nu+ (-1)^n(\iota_{d_A\ll\alpha,\beta\gg_\ast}\Pi)\nu\\
&= \mathcal{L}_{\Pi\alpha}\mathcal{L}_{\Pi\beta}\nu- \mathcal{L}_{\Pi\beta}\mathcal{L}_{\Pi\alpha}\nu
  + (-1)^n(\iota_{d_A\ll\alpha,\beta\gg_\ast}\Pi)\nu\\
&=\mathcal{L}_{\Pi\alpha}\big((\iota_\beta\widetilde{\mathcal{M}}_\Pi^\nu+ (-1)^{n-1}\iota_{d_A\beta}\Pi)\nu\big)\\
&- \mathcal{L}_{\Pi\beta}\big((\iota_\alpha\widetilde{\mathcal{M}}_\Pi^\nu+ (-1)^{n-1}\iota_{d_A\alpha}\Pi)\nu\big)\\
&+ (-1)^n(\iota_{d_A\ll\alpha,\beta\gg_\ast}\Pi)\nu.
\end{align*}
The last equality follows from Equation (\ref{relation_L_i_modular_class}).

\noindent Simplifying further using Equation (\ref{crucial}), we have
\begin{multline*}
 (\iota_{\ll \alpha,\beta\gg_\ast}\widetilde{\mathcal{M}}_\Pi^\nu)\nu=
(\iota_\beta\widetilde{\mathcal{M}}_\Pi^\nu+ (-1)^{n-1}i_{d_A\beta}\Pi)\mathcal{L}_{\Pi\alpha}\nu
+((a\circ\Pi)(\alpha))(\iota_\beta\widetilde{\mathcal{M}}_\Pi^\nu+ (-1)^{n-1}\iota_{d_A\beta}\Pi)\nu\\
-(\iota_\alpha\widetilde{\mathcal{M}}_\Pi^\nu+ (-1)^{n-1}\iota_{d_A\alpha}\Pi)\mathcal{L}_{\Pi\beta}\nu
-((a\circ\Pi)(\beta))(\iota_\alpha\widetilde{\mathcal{M}}_\Pi^\nu+ (-1)^{n-1}\iota_{d_A\alpha}\Pi)\nu\\
+(-1)^n\big(((a\circ\Pi)(\alpha))\iota_{d_A\beta}\Pi- ((a\circ\Pi)(\beta))\iota_{d_A\alpha}\Pi\big)\nu.
\end{multline*}
\noindent After rearranging and canceling and using Equation (\ref{relation_L_i_modular_class}), we have
\begin{multline*}
 (\iota_{\ll \alpha,\beta \gg_\ast}\widetilde{\mathcal{M}}_\Pi^\nu)\nu
= (\iota_\beta\widetilde{\mathcal{M}}_\Pi^\nu+ (-1)^{n-1}\iota_{d_A\beta}\Pi)
\big(\iota_\alpha\widetilde{\mathcal{M}}_\Pi^\nu+ (-1)^{n-1}\iota_{d_{A}\alpha}\Pi \big)\nu
+\big(((a\circ\Pi)(\alpha))\iota_\beta\widetilde{\mathcal{M}}_\Pi^\nu\big)\nu\\
-(\iota_\alpha\widetilde{\mathcal{M}}_\Pi^\nu+ (-1)^{n-1}\iota_{d_A\alpha}\Pi)
\big(\iota_\beta\widetilde{\mathcal{M}}_\Pi^\nu+ (-1)^{n-1}\iota_{d_{A}\beta}\Pi \big)\nu
-\big(((a\circ\Pi)(\beta))\iota_\alpha\widetilde{\mathcal{M}}_\Pi^\nu\big)\nu.
\end{multline*}
\noindent 
Therefore,
\[
  (\iota_{\ll \alpha,\beta\gg_\ast}\widetilde{\mathcal{M}}_\Pi^\nu)\nu=
\big(((a\circ\Pi)(\alpha))\iota_\beta\widetilde{\mathcal{M}}_\Pi^\nu\big)\nu
-\big(((a\circ\Pi)(\beta))\iota_\alpha\widetilde{\mathcal{M}}_\Pi^\nu\big)\nu.
\]
\noindent This proves the first part.

\noindent(2) Let $\nu^\prime$ be another orientation form representing the given orientation of $A$. Then there exists $f\in C^{\infty}(M)$, $f>0$ everywhere such that $\nu^\prime= f\nu$.
Let $\mathcal{M}_\Pi^\prime= [\mathcal{M}_\Pi^{\nu^\prime}]$.
We need to prove that $\mathcal{M}_\Pi^\prime= \mathcal{M}_\Pi$.\\
We have
\begin{align*}
\mathcal{L}_{\Pi\alpha}\nu^\prime
&= \big(\iota_\alpha \widetilde{\mathcal{M}}_\Pi^{\nu^\prime}+ (-1)^{n-1}\iota_{d_A\alpha}\Pi\big)\nu^\prime\\
&= f\big(i_\alpha \widetilde{\mathcal{M}}_\Pi^{\nu^\prime}+ (-1)^{n-1}i_{d_A\alpha}\Pi\big)\nu.
\end{align*}
\noindent On the other hand,
\begin{align*}
 \mathcal{L}_{\Pi\alpha}\nu^\prime
&= \mathcal{L}_{\Pi\alpha} f\nu\\
&= f\mathcal{L}_{\Pi\alpha}\nu+ ((a\circ \Pi)(\alpha)f)\nu\\
&= f\big(\iota_\alpha\widetilde{\mathcal{M}}_\Pi^\nu+ (-1)^{n-1}i_{d_{A}\alpha}\Pi \big)\nu+ ((a\circ \Pi)(\alpha)f)\nu
\end{align*}
\noindent Hence, we have
\[
f (\iota_\alpha \widetilde{\mathcal{M}}_\Pi^{\nu^\prime})\nu= f(\iota_\alpha\widetilde{\mathcal{M}}_\Pi^\nu)\nu+ ((a\circ \Pi)(\alpha)f)\nu.
\]
\noindent Thus we obtain,
\[
 \iota_\alpha \widetilde{\mathcal{M}}_\Pi^{\nu^\prime}= \iota_\alpha\widetilde{\mathcal{M}}_\Pi^\nu+ {\frac{1}{f}}((a\circ \Pi)(\alpha)f).
\]
\noindent Notice that
${\frac{1}{f}}((a\circ \Pi)(\alpha)f)= ((a\circ \Pi)(\alpha))(ln(f))= d_{\mathscr{A}}(ln(f))(\alpha)$.
Hence we have,
\[
 \mathcal{M}^{\nu^\prime}_\Pi= \mathcal{M}^{\nu}_\Pi+ d_{\mathscr{A}}(ln(u)).
\]
This implies  $\mathcal{M}_\Pi^\prime= \mathcal{M}_\Pi$.
\end{proof}
\begin{defn}
 Let $(A,[,]_{A},a)$ be a Lie algebroid over a smooth manifold $M$ equipped with a Nambu structure $\Pi$
of order $n\geq 3$. Let $A$ be orientable as a vector bundle and $\Gamma (A^\ast)$ be locally spanned by elements of the form $d_Af,~~ f\in C^{\infty}(M)$. Then the cohomology class
$\mathcal{M}_\Pi= [\mathcal{M}^{\nu}_{\Pi}] \in \mathcal{H}L^1(\mathscr{A})$ as defined above is called the {\it{the modular class}} of $  A$.
\end{defn}

\section{a duality theorem}
Let $(A,[,]_{A},a)$ be a Lie algebroid over a smooth manifold $M$ equipped with a regular Nambu structure $\Pi$
of order $n$, $3\leq n \leq m,$ $m$ being the rank of the vector bundle $A$. In this section, we first define Nambu cohomology modules of A and give an equivalent formulation of this cohomology which is a variant of foliated cohomology \cite{vais71, vais73} in the present context. Then we define oriented Nambu homology of $A$, when $A$ is oriented as a vector bundle and prove a duality theorem connecting oriented Nambu homology modules and Nambu cohomology modules.  Throughout, in this section, we assume that $\Gamma (A^\ast)$ is locally spanned by elements of the form $d_Af,~ f \in C^{\infty}(M)$.

Recall from Lemma \ref{subbundle} that we have a subbundle $\mathcal D = \bigcup_{x \in M}{\mathcal D}_x$ of $A$ where ${\mathcal D}_x = \Pi_{n-1}(\Lambda^{n-1}A_x^\ast).$ Therefore, the vector bundle morphism  $\Pi_{n-1}\colon \Lambda^{n-1}A^\ast \longrightarrow A$ (we also denote by $\Pi_{n-1}$ the corresponding $C^{\infty}(M)$-linear map $\Gamma (\Lambda^{n-1}A^\ast) \mapsto \Gamma(A)$) induces a vector bundle isomorphism 
$$\overline{\Pi}_{n-1} \colon \frac{\Lambda^{n-1}A^\ast}{\mbox{ker}~\Pi_{n-1}} \longrightarrow \Pi_{n-1}(\Lambda^{n-1}A^\ast).$$ 

Note that $\Gamma ( \frac{\Lambda^{n-1}A^\ast}{\mbox{ker}~\Pi_{n-1}})$ can be identified with $\frac{\Gamma(\Lambda^{n-1}A^\ast)}{\mbox{ker}~\Pi_{n-1}}$ and similarly, the space of smooth sections of $\Pi_{n-1}(\Lambda^{n-1} A^\ast)$ can be identified with $\Pi_{n-1}(\Gamma (\Lambda^{n-1}A^\ast)).$ With the above identifications, the $C^{\infty}(M)$-linear map
$$\overline{\Pi}_{n-1} \colon \frac{\Gamma(\Lambda^{n-1}A^\ast)}{\mbox{ker}~\Pi_{n-1}} \longrightarrow \Pi_{n-1}(\Gamma (\Lambda^{n-1}A^\ast))$$ may be described simply by $\overline{\Pi}_{n-1}([\alpha]) = \Pi_{n-1}(\alpha)$ for $[\alpha] \in \frac{\Gamma(\Lambda^{n-1}A^\ast)}{\mbox{ker}~\Pi_{n-1}}.$ 

Thus, we have an associated Lie algebroid
$$(\frac{\Lambda^{n-1}A^\ast}{\mbox{ker}~\Pi_{n-1}}, \langle~,~\rangle, a\circ \bar{\Pi}_{n-1})$$ where the Lie bracket on the space of smooth sections and the anchor are given respectively as follows.
$$\langle[\alpha], [\beta]\rangle : = [\langle\alpha,\beta\rangle],~~a\circ \bar{\Pi}_{n-1} ([\alpha]) \colon = a\circ \Pi (\alpha),$$
for all $\alpha, ~\beta \in \frac{\Gamma (\Lambda^{n-1}A^\ast)}{\mbox{ker}~\Pi_{n-1}}.$
Moreover, note that $\mathcal D$ is a Lie subalgebroid of $A$ and  
$$\overline{\Pi}_{n-1} \colon \frac{\Lambda^{n-1}A^\ast}{\mbox{ker}~\Pi_{n-1}} \longrightarrow \Pi_{n-1}(\Lambda^{n-1}A^\ast)$$ is a Lie algebroid isomorphism.   

Consider the trivial representation of the Lie algebroid $(\frac{\Lambda^{n-1}A^\ast}{\mbox{ker}~\Pi_{n-1}}, \langle~,~\rangle, a\circ \bar{\Pi}_{n-1})$ given by
$$\frac{\Gamma (\Lambda^{n-1}A^\ast)}{\mbox{ker}~\Pi_{n-1}}\times C^\infty(M) \longrightarrow C^\infty(M),~~([\alpha],f)\mapsto (a\circ \Pi_{n-1}(\alpha))f.$$
\begin{defn}
{\it Nambu cohomology} of the Lie algebroid $(A,[,]_{A},a)$ is by definition the Lie algebroid cohomology of the associated Lie algebroid
$$(\frac{\Lambda^{n-1}A^\ast}{\mbox{ker}~\Pi_{n-1}}, \langle~,~\rangle, a\circ \tilde{\Pi}_{n-1})$$ with trivial representation and will be denoted by $\mathcal{H}^\bullet_{\mbox{N}}(A)$.
\end{defn}
Next we give an equivalent formulation Nambu cohomology. We define a cochain complex as follows. 

Let $(\Gamma (\Lambda^{\bullet}A^\ast), d_A)$ be the cochain complex defining the Lie algebroid cohomology of $A$. Set  
$$\Omega^k(A,\Pi)= \{\alpha\in \Gamma (\Lambda^{k} A^\ast)|\alpha(X_1,\ldots,X_k)=0, \hspace*{2mm}
\mbox{for all} \hspace*{2mm}X_i\in\Pi_{n-1}(\Gamma (\Lambda^{n-1}A))\}.$$
Suppose $\alpha\in\Omega^k(A,\Pi)$, then it is straight forward to verify using Corollary \ref{morphism-leibalgd} that \\  $d_{A}\alpha\in\Omega^{k+1}(A,\Pi)$. Define
$$\Omega^k_{\Pi}(A):= \Gamma(\Lambda^k A^\ast)/\Omega^k(A,\Pi).$$
Then the coboundary operator $d_{A}$ induces a square zero operator
$$\tilde{d}_{A}\colon\Omega^k_{\Pi}(A) \longrightarrow \Omega_{\Pi}^{k+1}(A)$$ where $\tilde{d}_{A}([\alpha])= [d_{A}\alpha]$ for all $[\alpha]\in \Omega_{\Pi}^k(A).$
We shall denote the corresponding cohomology  by $\mathcal{H}_{\Pi}^\bullet(A).$

Define a map $ \Gamma (\Lambda^kA^\ast) \longrightarrow \Gamma(\Lambda ^k {\mathcal D}^\ast)$ given by
$$i^k(\alpha)(X_1, \ldots , X_k) = \alpha ( X_1, \ldots , X_k),~~ \mbox{for}~~\alpha \in \Gamma (\Lambda^k A^\ast),~~ X_i \in \Gamma (\mathcal D).$$ 
Note that this map is surjective, because, we may assume that the vector bundle $A$ is Euclidean and hence every sub bundle admits a compliment. Therefore, for every $f \in \Gamma(\Lambda ^k {\mathcal D}^\ast)$, we may find $\tilde{f} \in \Gamma(\Lambda ^k {\mathcal A}^\ast)$ with the property $i^k(\tilde{f}) = f.$ It is also clear from the definition that the kernel of $i^k$ is $\Omega^k(A,\Pi).$ Therefore, for each $k$, we have a $C^{\infty}(M)$-linear isomorphism $$ \pi^k : \Omega^k_{\Pi}(A) \longrightarrow \Gamma(\Lambda ^k {\mathcal D}^\ast).$$ Moreover, we have $\pi^{k+1}\circ \tilde{d}_A = d_D\circ \pi^k,$ because, for any sections $X_0, X_1, \ldots ,X_k \in \Gamma(\mathcal D)$ and $\alpha \in \Gamma (\Lambda^k A^\ast),$
$d_D\circ \pi^k ([\alpha])(X_0, \ldots , X_k)$\\
$= \Sigma_{i=0}^k (-1)^ia(X_i)\pi^k([\alpha])(X_0, \ldots , \widehat{X_i}, \ldots, X_k)$\\ 
$+ \Sigma_{i<j}(-1)^{i+j}\pi^k([\alpha])([X_i,X_j]_A, \ldots , \widehat{X_i}, \ldots, \widehat{X_j}, \ldots, X_k)$\\
$= \Sigma_{i=0}^k (-1)^ia(X_i)\alpha(X_0, \ldots , \widehat{X_i}, \ldots, X_k)$\\ 
$+ \Sigma_{i<j}(-1)^{i+j}\alpha([X_i,X_j]_A, \ldots , \widehat{X_i}, \ldots, \widehat{X_j}, \ldots, X_k)$\\
$= (d_A\alpha)(X_0, \ldots, X_k)$\\ 
$= \pi^{k+1}([d_a\alpha])(X_0, \ldots, X_k)$\\
$= (\pi^{k+1}\circ \tilde{d}_A)([\alpha])X_0, \ldots, X_k).$

Thus we have the following result.

\begin{prop}
 Let $(A,[,]_A,a)$ be a Lie algebroid with a regular Nambu structure $\Pi$ of order $n$ with $n\geq 3$. Assume that
$\Gamma A^\ast$ is locally spanned by elements of the form $d_Af$ where $f\in C^\infty(M)$. Then the $C^{\infty}(M)$-linear homomorphisms
$$i^k(\alpha)(X_1, \ldots , X_k) = \alpha ( X_1, \ldots , X_k),~~ \mbox{for}~~\alpha \in \Gamma (\Lambda^k A^\ast),~~ X_i \in \Gamma (\mathcal D)$$ 
\noindent induce an isomorphism of complexes $\pi^\bullet\colon (\Omega^\bullet_{\Pi}(A),\tilde{d}_A)\longrightarrow
(\Gamma(\Lambda^{\bullet}{\mathcal D}^\ast), d_D)$. Thus, 
$$\mathcal{H}^\bullet_{\mbox{N}}(A) \cong \mathcal{H}_{\Pi}^\bullet(A).$$
\end{prop}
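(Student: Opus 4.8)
The plan is to promote the pointwise restriction maps $i^k$ to an isomorphism of cochain complexes $\Omega^\bullet_\Pi(A)\cong\Gamma(\Lambda^\bullet\mathcal D^\ast)$, and then to identify the second complex with the one computing $\mathcal H^\bullet_{\mathrm N}(A)$ via the Lie algebroid isomorphism $\overline{\Pi}_{n-1}$.

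First I would establish the algebraic properties of $i^k\colon\Gamma(\Lambda^kA^\ast)\to\Gamma(\Lambda^k\mathcal D^\ast)$. For surjectivity, recall that since $M$ is paracompact we may take $A$ to be Euclidean; by Lemma~\ref{subbundle}, $\mathcal D$ is a genuine vector subbundle (of locally constant rank $n$), so it splits off an orthogonal complement, and any $f\in\Gamma(\Lambda^k\mathcal D^\ast)$ extends to $A$ by letting it annihilate that complement. The kernel of $i^k$ is, directly from the definition, the submodule $\Omega^k(A,\Pi)$ of forms that vanish on every $k$-tuple of sections of $\mathcal D=\Pi_{n-1}(\Gamma(\Lambda^{n-1}A^\ast))$. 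Hence $i^k$ descends to a $C^\infty(M)$-linear isomorphism $\pi^k\colon\Omega^k_\Pi(A)\to\Gamma(\Lambda^k\mathcal D^\ast)$.

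Next I would check the commutation $\pi^{k+1}\circ\tilde d_A=d_D\circ\pi^k$. For this it is essential that $\mathcal D$ is a Lie subalgebroid of $A$ — which is the Corollary asserting that $\Pi_{n-1}(\Lambda^{n-1}A^\ast)$ is a Lie subalgebroid, a consequence of Corollary~\ref{morphism-leibalgd} — so that the Chevalley--Eilenberg differential $d_D$ of $(\mathcal D,[\,,\,]_A|_{\mathcal D},a|_{\mathcal D})$ with trivial coefficients is defined and computed by the usual formula. Evaluating $d_D(\pi^k[\alpha])$ on $X_0,\dots,X_k\in\Gamma(\mathcal D)$, every term involves only the operators $a(X_i)$, the values $\alpha(\dots,\widehat{X_i},\dots)$ and the brackets $[X_i,X_j]_A$, which still lie in $\Gamma(\mathcal D)$ by bracket-closedness; so the whole expression equals $(d_A\alpha)(X_0,\dots,X_k)=i^{k+1}(d_A\alpha)(X_0,\dots,X_k)=\pi^{k+1}(\tilde d_A[\alpha])(X_0,\dots,X_k)$. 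Thus $\pi^\bullet$ is an isomorphism of complexes and induces $\mathcal H^\bullet_\Pi(A)\cong H^\bullet\bigl(\Gamma(\Lambda^\bullet\mathcal D^\ast),d_D\bigr)$.

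It remains to identify the right-hand side with $\mathcal H^\bullet_{\mathrm N}(A)$. By definition the latter is the Lie algebroid cohomology with trivial coefficients of $\bigl(\frac{\Lambda^{n-1}A^\ast}{\ker\Pi_{n-1}},\langle\,,\,\rangle,a\circ\overline{\Pi}_{n-1}\bigr)$; since $\overline{\Pi}_{n-1}$ is an isomorphism of Lie algebroids onto $\mathcal D$ intertwining the two trivial representations (each being $a$ postcomposed with the respective anchor), it induces an isomorphism of the Chevalley--Eilenberg cochain complexes, whence $\mathcal H^\bullet_{\mathrm N}(A)\cong H^\bullet\bigl(\Gamma(\Lambda^\bullet\mathcal D^\ast),d_D\bigr)\cong\mathcal H^\bullet_\Pi(A)$. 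I expect the only point requiring genuine care to be the first one — that $\mathcal D$ is a bona fide vector subbundle of constant rank admitting a complement, which is precisely what makes $i^k$ surjective; the commutation identity and the final identification are formal and, in fact, already unwound in the discussion preceding the statement.
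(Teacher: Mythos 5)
Your proposal is correct and follows essentially the same route as the paper: surjectivity of $i^k$ via a Euclidean structure and a complement to the subbundle $\mathcal D$, identification of $\ker i^k$ with $\Omega^k(A,\Pi)$, the direct verification that $\pi^{k+1}\circ\tilde d_A=d_D\circ\pi^k$ using bracket-closedness of $\Gamma(\mathcal D)$, and the final identification through the Lie algebroid isomorphism $\overline{\Pi}_{n-1}$. As you note, the paper itself carries out exactly these steps in the discussion immediately preceding the statement.
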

The following result follows from the definition of regularity of Nambu structure (see Proposition $4.2.$, \cite{illmp} for details).
\begin{prop}\label{kernel-pik}
 Let $(A,[,]_{A},a)$ be a Lie algebroid over a smooth manifold $M$. Let $\Pi$ be a regular Nambu structure
on $A$ of order $n$. Then 
\[
\Omega^k(A, \Pi)= \hspace*{1mm}\text{ker}~\Pi_{k},\hspace*{2mm}\text{for all}\hspace*{3mm} k=1,\ldots,n.
\]
\end{prop}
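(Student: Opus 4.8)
The plan is to reduce the identity $\Omega^k(A,\Pi)=\ker\Pi_k$ to a pointwise statement about a single fibre and then read it off from the local normal form of a regular Nambu structure. First I would observe that it suffices to prove, for every $x\in M$ and every $k$ with $1\le k\le n$, the equality of subspaces of $\Lambda^kA^\ast_x$
\[
\{\alpha\in\Lambda^kA^\ast_x:\ \alpha|_{\mathcal D_x}=0\}=\ker\!\big(\Pi_k|_{\Lambda^kA^\ast_x}\big),
\]
since by definition $\Omega^k(A,\Pi)$ is the space of sections of $\Lambda^kA^\ast$ annihilating $\Gamma(\mathcal D)=\Pi_{n-1}(\Gamma(\Lambda^{n-1}A^\ast))$, and annihilating all sections of the subbundle $\mathcal D$ is the same as annihilating each fibre $\mathcal D_x$; likewise $\alpha\in\ker\Pi_k$ means $\Pi_k(\alpha)$ vanishes at each point.

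Fix $x$. By the standing hypothesis of this section $\Gamma A^\ast$ is locally spanned by the $d_Af$, so Proposition \ref{locally_decomposable} makes $\Pi$ locally decomposable; combining this with regularity and Remark \ref{linearly_independent}, I may choose a neighbourhood $U\ni x$ and a local frame $X_1,\dots,X_m$ of $A$ over $U$ with $\Pi|_U=X_1\wedge\cdots\wedge X_n$ and with $X_1,\dots,X_n$ linearly independent; let $X_1^\ast,\dots,X_m^\ast$ be the dual coframe. By Lemma \ref{subbundle} the fibre $\mathcal D_x=\Pi_{n-1}(\Lambda^{n-1}A^\ast_x)$ is exactly the span of $X_1(x),\dots,X_n(x)$. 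Writing $\alpha\in\Lambda^kA^\ast_x$ in the coframe as $\alpha=\sum_{|I|=k}\alpha_I X_I^\ast$, with $X_I^\ast=X_{i_1}^\ast\wedge\cdots\wedge X_{i_k}^\ast$ for $I=\{i_1<\cdots<i_k\}$, the condition $\alpha|_{\mathcal D_x}=0$ is equivalent to the vanishing of all coefficients $\alpha_I$ with $I\subseteq\{1,\dots,n\}$.

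For the other side I compute $\Pi_k(\alpha)=\iota_\alpha\Pi$ via the pairing identity recorded in Section \ref{prel}: for all $\psi\in\Lambda^{n-k}A^\ast_x$ one has $\langle\psi,\iota_\alpha\Pi\rangle=(-1)^{[k/2]}\langle\alpha\wedge\psi,\Pi\rangle$, so $\iota_\alpha\Pi=0$ if and only if $\langle\alpha\wedge\psi,\Pi\rangle=0$ for every such $\psi$. Expanding $\psi$ in the dual coframe and using that $\langle X_J^\ast,X_1\wedge\cdots\wedge X_n\rangle$ vanishes unless $J=\{1,\dots,n\}$, one finds that $\langle\alpha\wedge\psi,\Pi\rangle$ is a signed sum of the products $\alpha_I\,\psi_{\{1,\dots,n\}\setminus I}$ over $k$-subsets $I\subseteq\{1,\dots,n\}$. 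Hence if $\alpha|_{\mathcal D_x}=0$ then all these $\alpha_I$ vanish and $\iota_\alpha\Pi=0$; conversely, taking $\psi=X_{\{1,\dots,n\}\setminus I}^\ast$ for a fixed $k$-subset $I\subseteq\{1,\dots,n\}$ isolates (up to sign) the single coefficient $\alpha_I$, so $\iota_\alpha\Pi=0$ forces every such $\alpha_I$ to vanish, i.e. $\alpha|_{\mathcal D_x}=0$. Since $x$ and $U$ were arbitrary, this yields $\Omega^k(A,\Pi)=\ker\Pi_k$ for $1\le k\le n$.

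I expect the only delicate points to be (i) checking that the standing hypotheses of the section are precisely what is needed to bring $\Pi$ into the form $X_1\wedge\cdots\wedge X_n$ with the $X_i$ \emph{linearly independent}, so that $\mathcal D_x$ genuinely has dimension $n$ as in Lemma \ref{subbundle}, and (ii) the sign and index bookkeeping in the expansion of $\langle\alpha\wedge\psi,\Pi\rangle$. If one prefers, (ii) can be avoided for the harder inclusion: the easy inclusion $\Omega^k(A,\Pi)\subseteq\ker\Pi_k$ follows immediately from the pairing identity, and since the fibre of $\Omega^k(A,\Pi)$ at $x$ has dimension $\binom{m}{k}-\binom{n}{k}$, which by Remark \ref{orthogonal}(1) equals the rank of $\ker\Pi_k$, equality of the two subbundles is forced.
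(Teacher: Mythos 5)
Your argument is correct, and it is in fact more than the paper provides: the paper offers no proof of Proposition \ref{kernel-pik} at all, merely asserting that it ``follows from the definition of regularity'' and deferring to Proposition 4.2 of the Ibanez--de Leon--Lopez--Marrero--Padron reference, where the analogous statement is proved for Nambu--Poisson manifolds by exactly the kind of local computation you carry out. Your reduction to a pointwise statement, the use of the standing hypothesis of the section (that $\Gamma A^\ast$ is locally spanned by the $d_Af$, which via Proposition \ref{locally_decomposable}, Remark \ref{linearly_independent} and Lemma \ref{subbundle} gives the frame with $\Pi|_U=X_1\wedge\cdots\wedge X_n$ and $\mathcal D_x=\mathrm{span}(X_1(x),\dots,X_n(x))$), and the expansion of $\langle\alpha\wedge\psi,\Pi\rangle$ in the dual coframe are all sound; you are right to flag that this hypothesis is not stated in the proposition itself but is needed and is assumed throughout the section. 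Your closing shortcut is also valid and is the cleanest route: the inclusion $\Omega^k(A,\Pi)\subseteq\ker\Pi_k$ follows from the normal form together with the pairing identity, and equality is then forced by the rank count of Remark \ref{orthogonal}(1), since both are subbundles of $\Lambda^kA^\ast$ of rank $\binom{m}{k}-\binom{n}{k}$. In short, your proof supplies the details the paper outsources, by the same method one finds in the cited source.
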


Next we define the notion of {\it canonical Nambu homology} of a Lie algebroid which is oriented as a vector bundle and equipped with a Nambu structure. Let $A$ be an oriented Lie algebroid with rank $A= m$. Let $\nu$ be the chosen orientation form on $A$. Then we have a vector
bundle isomorphism $\flat_\nu\colon \Lambda^kA\longrightarrow \Lambda^{m-k}A^\ast$ given by
\[
 \flat_\nu(P)= \iota_P\nu \hspace*{2mm}\text{for all}\hspace*{2mm}P\in \Gamma (\Lambda^k A).
\]
\noindent Define an operator $\delta_\nu$ as
\[
 \delta_\nu:= \flat_\nu^{-1}\circ d_{A}\circ \flat_\nu\colon  \Gamma (\Lambda^k A)\longrightarrow \Gamma (\Lambda^{k-1} A)
\]
\noindent Clearly, $\delta_\nu^2= 0$. The homology of the complex $(\Gamma(\Lambda^\bullet A),\delta_\nu)$ is denoted by $\mathcal{H}^\nu_\bullet(A)$.\\
Notice that the cohomology  $\mathcal{H}^\nu_\bullet(A)$ is dual of the Lie algebroid cohomology of $A$ with
trivial coefficients. So, $\mathcal{H}^\nu_\bullet(A)$ does not depend on the choice of orientation form on $A$.
Note that for $X\in\Gamma A$, we have $\mathcal{L}_X\nu= (\delta_\nu(X))\nu.$ This is because,
$$\delta_\nu(X)= (\flat_\nu^{-1}\circ d_{A}\circ \flat_\nu)(X)= \flat_\nu^{-1}\circ d_{A}\circ \iota_X\nu,$$
and since we have $\mathcal{L}_X= d_{A}\circ \iota_X+ \iota_X\circ d_{A}$, it follows that $\delta_\nu(X)= \flat_\nu^{-1}\circ\mathcal{L}_X\nu.$ Therefore,
$$\mathcal{L}_X \nu= \flat_\nu(\delta_\nu(X))= \iota_{\delta_\nu(X)}\nu= (\delta_\nu(X))\nu.$$
In view of this, we may write
\begin{equation}\label{divergence}
\delta_\nu(X)=\hspace*{1mm}\text{div}_\nu(X)\hspace*{2mm}\text{for all}\hspace*{2mm} X\in\Gamma A.
\end{equation}
The proofs of the following two results are analogous to the proofs of Lemma $5.1$ and Proposition $5.2$, \cite {illmp} and hence we omit the details.
\begin{lemma}
 Let $A$ be an oriented Lie algebroid of rank $m$ and $\nu$ be the chosen orientation form on $A$. Then for all
$P\in\Gamma(\Lambda^k A)$ and $X\in\Gamma A$, we have
\[
\mathcal{L}_X\flat_\nu(P)= \flat_\nu(\mathcal{L}_X P)+ \delta_\nu(X)\flat_\nu(P)
\]
\end{lemma}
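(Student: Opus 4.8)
The plan is to reduce the assertion to the Lie algebroid version of the Cartan calculus. The one nontrivial ingredient is the identity
\[
\mathcal{L}_X\bigl(\iota_P\omega\bigr)=\iota_{\mathcal{L}_X P}\,\omega+\iota_P\bigl(\mathcal{L}_X\omega\bigr),\qquad X\in\Gamma A,\ P\in\Gamma(\Lambda^k A),\ \omega\in\Gamma(\Lambda^\bullet A^\ast),
\]
expressing that $\mathcal{L}_X$ is a derivation of the contraction pairing; here $\mathcal{L}_X P$ denotes the Lie derivative of the multisection $P$, that is, the Schouten bracket $[X,P]_A$ extending the Lie bracket on $\Gamma A$. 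Granting this, the lemma follows in one line: put $\omega=\nu$, use the relation $\mathcal{L}_X\nu=\delta_\nu(X)\nu$ noted above, and observe that the function $\delta_\nu(X)$ commutes past $\iota_P$, so that
\[
\mathcal{L}_X\flat_\nu(P)=\mathcal{L}_X(\iota_P\nu)=\iota_{\mathcal{L}_X P}\nu+\iota_P\bigl(\delta_\nu(X)\nu\bigr)=\flat_\nu(\mathcal{L}_X P)+\delta_\nu(X)\flat_\nu(P).
\]

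To establish the derivation identity I would induct on $k=\deg P$. A standard reduction, using locality together with the Leibniz rules for $\iota$ and $\mathcal{L}_X$ with respect to multiplication of $P$ by smooth functions, lets us assume $P=Y_1\wedge\cdots\wedge Y_k$ with $Y_i\in\Gamma A$, and we may also use that $\mathcal{L}_X$ is a degree-zero derivation of $\Gamma(\Lambda^\bullet A)$ for $\wedge$. For $k=1$, i.e. $P=Y$, the identity is exactly $\mathcal{L}_X\circ\iota_Y-\iota_Y\circ\mathcal{L}_X=\iota_{[X,Y]_A}$, one of the Cartan-type relations of the Lie algebroid calculus recalled after Definition~\ref{lie-L-i} (see \cite{mac}), together with $[X,Y]_A=\mathcal{L}_X Y$. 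For the inductive step, write $P=Y_1\wedge Q$ with $Q=Y_2\wedge\cdots\wedge Y_k$; then $\iota_P=\iota_{Y_1}\circ\iota_Q$ by the description of $\iota$ on decomposable multisections in $\S 2$, and $\mathcal{L}_X P=(\mathcal{L}_X Y_1)\wedge Q+Y_1\wedge(\mathcal{L}_X Q)$. Applying $\mathcal{L}_X$ to $\iota_{Y_1}(\iota_Q\omega)$, inserting the case $k=1$ and then the inductive hypothesis for $Q$, and collecting terms, the two cross-terms assemble into $\iota_{\mathcal{L}_X P}\omega$ and the remaining term into $\iota_P(\mathcal{L}_X\omega)$.

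A brute-force alternative is to evaluate both sides of the lemma directly on an arbitrary tuple $X_1,\ldots,X_{m-k}\in\Gamma A$, expand $\mathcal{L}_X$ and $\iota$ via the defining formulas of Definition~\ref{lie-L-i} extended to multisections as in $\S 2$, and use $\mathcal{L}_X\nu=\delta_\nu(X)\nu$ once more; since $\Lambda^m A^\ast$ is a line bundle, everything ultimately reduces to scalar identities involving $\nu$. I expect the main (and essentially only) obstacle in either approach to be keeping the contraction and orientation sign conventions consistent throughout; the underlying algebra is routine, which is why---following \cite{illmp}---these details are omitted.
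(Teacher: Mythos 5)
Your proof is correct and takes the same route as the argument the paper defers to (it omits the proof, citing Lemma 5.1 of \cite{illmp}): namely, the Cartan-calculus fact that $\mathcal{L}_X$ is a derivation of the contraction pairing, $\mathcal{L}_X(\iota_P\omega)=\iota_{\mathcal{L}_X P}\omega+\iota_P(\mathcal{L}_X\omega)$, applied with $\omega=\nu$ together with $\mathcal{L}_X\nu=\delta_\nu(X)\nu$. Your reduction to decomposable $P$ and the induction on $k$ via $\mathcal{L}_X\iota_Y-\iota_Y\mathcal{L}_X=\iota_{[X,Y]_A}$ are sound, so nothing further is needed.
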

\begin{prop}\label{ialphadelnu}
 Let $A$ be an oriented Lie algebroid of rank $m$ and $\nu$ be the chosen orientation form on $A$. Then
\[
 \iota_\alpha(\delta_\nu(P))= \delta_\nu(\iota_\alpha P)+ (-1)^k \iota_{d_{A}\alpha}P\hspace*{1mm}\text{for all}\hspace*{1mm}
P\in\Gamma(\Lambda^k A), \alpha \in\Gamma (\Lambda^{k-1}A^\ast).
\]
\end{prop}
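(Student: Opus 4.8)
The plan is to transport the identity through the isomorphism $\flat_\nu\colon\Gamma(\Lambda^k A)\to\Gamma(\Lambda^{m-k}A^\ast)$, $P\mapsto\iota_P\nu$, under which $\delta_\nu$ becomes $d_A$ by its very definition, so that the assertion collapses to the Leibniz rule for $d_A$ with respect to $\wedge$.

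The one piece of linear algebra I would isolate first is the compatibility of $\flat_\nu$ with the two interior-product operations: for $\beta\in\Gamma(\Lambda^i A^\ast)$ and $Q\in\Gamma(\Lambda^j A)$ with $i\le j$,
\[
\flat_\nu(\iota_\beta Q)=(-1)^{s(i,j)}\,\beta\wedge\flat_\nu(Q),
\]
with an explicit sign $s(i,j)$ dictated by the $(-1)^{[\cdot/2]}$ conventions of Section~\ref{prel}. This is checked on decomposable $Q=X_1\wedge\cdots\wedge X_j$ and $\beta=\theta_1\wedge\cdots\wedge\theta_i$ by substituting the pairing rules $\langle\psi,\iota_\beta Q\rangle=(-1)^{[i/2]}\langle\beta\wedge\psi,Q\rangle$ and $\langle\iota_Q\nu,\eta\rangle=(-1)^{[j/2]}\langle\nu,Q\wedge\eta\rangle$ into the definition of $\flat_\nu$; it is precisely the computation underlying the corresponding step in \cite{illmp}.

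Granting this, since $\flat_\nu$ is bijective it suffices to verify the identity after applying $\flat_\nu$. Using $d_A\circ\flat_\nu=\flat_\nu\circ\delta_\nu$ I would write
\[
\flat_\nu\bigl(\delta_\nu(\iota_\alpha P)\bigr)=d_A\bigl(\flat_\nu(\iota_\alpha P)\bigr)=(-1)^{s(k-1,k)}\,d_A\bigl(\alpha\wedge\flat_\nu P\bigr),
\]
expand via the Leibniz rule $d_A(\alpha\wedge\flat_\nu P)=d_A\alpha\wedge\flat_\nu P+(-1)^{k-1}\alpha\wedge d_A\flat_\nu P$, and then rewrite each summand back through the compatibility identity together with $d_A\flat_\nu P=\flat_\nu(\delta_\nu P)$, converting $d_A\alpha\wedge\flat_\nu P$ into $\pm\,\flat_\nu(\iota_{d_A\alpha}P)$ and $\alpha\wedge\flat_\nu(\delta_\nu P)$ into $\pm\,\flat_\nu(\iota_\alpha(\delta_\nu P))$. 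Applying $\flat_\nu^{-1}$ and collecting terms yields the stated formula.

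The only genuine work is the sign bookkeeping: the three conversions between $\wedge$ and $\iota$ each contribute a $(-1)^{[\cdot/2]}$ factor, which together with the $(-1)^{k-1}$ from the Leibniz rule must combine so that $\iota_\alpha(\delta_\nu P)$ appears with coefficient $+1$ and $\iota_{d_A\alpha}P$ with coefficient $(-1)^k$. I expect to confirm this by evaluating both sides on a local decomposable $P=X_1\wedge\cdots\wedge X_k$ taken from an orientation-adapted local frame of $A$, so that $\flat_\nu(P)$ is up to a nonvanishing scalar a single wedge of coframe elements, reducing the identity to a finite combinatorial check. Since this check uses only that $d_A$ is an odd square-zero derivation of $(\Gamma(\Lambda^\bullet A^\ast),\wedge)$, it is identical to the de~Rham case, which is why the proof runs parallel to Proposition~5.2 of \cite{illmp} with $d_A$ replacing the de~Rham differential.
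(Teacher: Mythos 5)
Your proposal is correct and is exactly the argument the paper intends: the paper omits the proof, deferring to Proposition~5.2 of \cite{illmp}, whose proof is precisely this transport of the Leibniz rule for $d_A$ through $\flat_\nu$ via the compatibility $\flat_\nu(\iota_\beta Q)=\pm\,\beta\wedge\flat_\nu(Q)$, with the de~Rham differential replaced by $d_A$. The only remaining work is the sign bookkeeping you have already isolated, which is routine given the paper's $(-1)^{[i/2]}$ conventions.
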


We define a subcomplex of $(\Gamma(\Lambda^\bullet A),\delta_\nu)$ as follows.

Let $(A,[,]_{A},a)$ be a Lie algebroid of rank $m$ over a smooth manifold $M$. Let $\Pi$ be a regular Nambu
structure on $A$ of order $n,~ n\leq m$. Then consider the subset of $\Gamma (\Lambda^kA)$ given by
\[
 \mathcal{V}^k(A,\Pi)= \{P\in\Gamma (\Lambda^k A)|~\iota_\alpha P=0\hspace*{1mm}\text{for all}\hspace*{1mm}\alpha\in
\Gamma (A^\ast),\alpha\in\hspace*{1mm}\text{ker}~\Pi_1\},~k\geq 1
\]
\noindent Set $\mathcal{V}^0(A,\Pi)= C^\infty(M)$. Note that by Remark (\ref{orthogonal}), $\mathcal{V}^k(A,\Pi)$ is just the space of all  $k$-multisections of $A$ which are orthogonal to $\mathcal D.$ As a consequence, we have
\begin{lemma}\label{image-pi-n-k}
 Let $(A,[,]_{A},a)$ be a Lie algebroid over $M$ with regular Nambu structure $\Pi$ of order $n$. Then
\[
 \mathcal{V}^k(A,\Pi)= \Pi_{n-k}(\Gamma (\Lambda^{n-k} A^\ast))\hspace*{1mm}\text{for all}\hspace*{1mm}k=1,\ldots,n.
\]
\end{lemma}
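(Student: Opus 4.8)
The plan is to prove the two inclusions $\mathcal{V}^k(A,\Pi) \supseteq \Pi_{n-k}(\Gamma(\Lambda^{n-k}A^\ast))$ and $\mathcal{V}^k(A,\Pi) \subseteq \Pi_{n-k}(\Gamma(\Lambda^{n-k}A^\ast))$ separately, working pointwise. Both are in fact already packaged by Remark \ref{orthogonal}(2): there, $\mathcal{D}^0(x) = \mathrm{Ker}(\Pi_1|A_x^\ast)$ and the space of smooth sections of the subbundle $\Pi_k(\Lambda^k A^\ast) \to M$ is identified with the $(n-k)$-multisections orthogonal to $\mathcal{D}$. So after replacing $k$ by $n-k$ in that remark, the statement $\mathcal{V}^k(A,\Pi) = \Pi_{n-k}(\Gamma(\Lambda^{n-k}A^\ast))$ is exactly the assertion of the remark combined with the observation that $\mathcal{V}^k(A,\Pi)$, by definition, is the set of $k$-multisections $P$ with $\iota_\alpha P = 0$ for every $\alpha \in \Gamma(A^\ast)$ with $\alpha \in \mathrm{Ker}\,\Pi_1$, i.e. every $\alpha$ taking values in $\mathcal{D}^0$.

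First I would reduce to a pointwise statement. Since being orthogonal to $\mathcal{D}$ and being a section of the subbundle $\Pi_{n-k}(\Lambda^{n-k}A^\ast)$ are both fibrewise conditions, and since $M$ is paracompact so that we may pick a Euclidean structure on $A$ and work in local frames as in Lemma \ref{subbundle}, it suffices to show that for each $x \in M$, the subspace $\mathcal{V}^k(A,\Pi)_x \subseteq \Lambda^k A_x$ of elements killed by contraction with every covector in $\mathcal{D}^0(x)$ coincides with $\Pi_{n-k}(\Lambda^{n-k}A_x^\ast)$. Using regularity, fix the local frame $X_1,\dots,X_m$ of $A$ near $x$ with $\Pi|_U = X_1 \wedge \cdots \wedge X_n$, as produced in the proof of Lemma \ref{subbundle}; then $\mathcal{D}(x)$ is spanned by $X_1(x),\dots,X_n(x)$ and $\mathcal{D}^0(x)$ is spanned by $X_{n+1}^\ast(x),\dots,X_m^\ast(x)$. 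An explicit check in this basis shows that $\iota_{X_j^\ast}P = 0$ for all $j > n$ forces $P$ to be a linear combination of the wedges $X_{i_1}\wedge\cdots\wedge X_{i_k}$ with all indices $\le n$, i.e. $P \in \Lambda^k(\mathcal{D}(x))$; and conversely $\Pi_{n-k}$ applied to $\Lambda^{n-k}A_x^\ast$ is spanned, as computed by contracting $X_1\wedge\cdots\wedge X_n$ with the dual basis $(n-k)$-forms $X_{j_1}^\ast(x)\wedge\cdots\wedge X_{j_{n-k}}^\ast(x)$ with $j_\ell \le n$, exactly by the complementary wedges $X_{i_1}\wedge\cdots\wedge X_{i_k}$ with $i_\ell \le n$ (up to sign), hence equals $\Lambda^k(\mathcal{D}(x))$ as well. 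This gives equality of the fibres, and then the smooth-section statement follows because both sides are the sections of the same subbundle.

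The only genuine content beyond bookkeeping is making sure the smoothness matches up: an element of $\mathcal{V}^k(A,\Pi)$ is by hypothesis a \emph{smooth} section all of whose values lie in $\Lambda^k(\mathcal{D}(x))$, hence a smooth section of the subbundle $\Lambda^k\mathcal{D} \subseteq \Lambda^k A$; and conversely any $P \in \Pi_{n-k}(\Gamma(\Lambda^{n-k}A^\ast))$ is visibly a smooth section orthogonal to $\mathcal{D}$. That both descriptions give smooth sections of the \emph{same} subbundle is precisely the surjectivity-of-a-bundle-map argument already used in the excerpt (the image of a vector bundle morphism of constant rank is a subbundle, and its smooth sections are the images of smooth sections). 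Thus the main — and essentially only — obstacle is organizational: invoking Remark \ref{orthogonal} with $k$ replaced by $n-k$ and Lemma \ref{subbundle} to get a frame adapted to $\Pi$, then doing the short linear-algebra computation in that frame. I expect the proof to be a couple of sentences reducing to these already-established facts, which is presumably why the paper states it as an immediate corollary (``As a consequence, we have'').

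\begin{proof}
By Remark \ref{orthogonal}(2) (applied with $k$ replaced by $n-k$), the space of smooth sections of the subbundle $\Pi_{n-k}(\Lambda^{n-k}A^\ast) \to M$ is precisely the set of $k$-multisections of $A$ that are orthogonal to $\mathcal{D}$, i.e.\ those $P \in \Gamma(\Lambda^k A)$ with $\iota_{u^\ast}P(x) = 0$ for all $u^\ast \in \mathcal{D}^0(x)$ and all $x \in M$. Since $\mathcal{D}^0(x) = \mathrm{Ker}(\Pi_1|A_x^\ast)$, this is exactly the condition defining $\mathcal{V}^k(A,\Pi)$. Hence $\mathcal{V}^k(A,\Pi) = \Pi_{n-k}(\Gamma(\Lambda^{n-k}A^\ast))$ for all $k = 1, \ldots, n$.
\end{proof}
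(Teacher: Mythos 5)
Your proof is correct and follows exactly the route the paper intends: the lemma is stated there as an immediate consequence of Remark \ref{orthogonal}(2) (with $k$ replaced by $n-k$), identifying both $\mathcal{V}^k(A,\Pi)$ and $\Pi_{n-k}(\Gamma(\Lambda^{n-k}A^\ast))$ with the $k$-multisections orthogonal to $\mathcal{D}$. Your additional pointwise frame computation is a sound (and slightly more explicit) justification of that remark, but the final argument is the same as the paper's.
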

Now proceeding as in the proof of Proposition $5.5$, \cite{illmp} and using Proposition (\ref{kernel-pik}) and Proposition (\ref{ialphadelnu}), we obtain
\begin{prop}
Let $(A,[,]_A, a)$ be a Lie algebroid over a smooth manifold $M$ with a regular Nambu structure $\Pi$ of order $n$.
Let $A$ be oriented as a vector bundle and $\nu$ be the chosen orientation form on $A$ representing the orientation. Then
\[
 \delta_\nu(\mathcal{V}^k(A,\Pi))\subseteq \mathcal{V}^{k-1}(A,\Pi)\hspace*{1mm}\text{for all}\hspace*{1mm}
k=1,\ldots,n.
\]
\end{prop}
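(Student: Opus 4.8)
The plan is to reduce the statement to a pointwise computation at regular points, exactly as in the proof of Proposition $5.5$ of \cite{illmp}, using the two structural results already available: the characterization $\mathcal{V}^k(A,\Pi)=\Pi_{n-k}(\Gamma(\Lambda^{n-k}A^\ast))$ from Lemma \ref{image-pi-n-k} and the commutation formula for $\iota_\alpha$ and $\delta_\nu$ from Proposition \ref{ialphadelnu}. The key observation is that $\mathcal{V}^k(A,\Pi)$ is defined by the vanishing conditions $\iota_\alpha P=0$ for all $\alpha\in\Gamma(A^\ast)$ with $\alpha\in\ker\Pi_1$, so to show $\delta_\nu(P)\in\mathcal{V}^{k-1}(A,\Pi)$ for $P\in\mathcal{V}^k(A,\Pi)$ it suffices to show $\iota_\alpha(\delta_\nu(P))=0$ for every such $\alpha$.

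First I would take $P\in\mathcal{V}^k(A,\Pi)$ and $\alpha\in\Gamma(A^\ast)$ with $\alpha\in\ker\Pi_1$, i.e. $\alpha\in\Omega^1(A,\Pi)$ by Proposition \ref{kernel-pik}. Applying Proposition \ref{ialphadelnu} with this $1$-form gives
\[
\iota_\alpha(\delta_\nu(P))=\delta_\nu(\iota_\alpha P)+(-1)^k\iota_{d_A\alpha}P.
\]
The first term vanishes because $\iota_\alpha P=0$ by the definition of $\mathcal{V}^k(A,\Pi)$. For the second term I would invoke the fact (from the paragraph preceding Proposition \ref{kernel-pik}, via Corollary \ref{morphism-leibalgd}) that $\Omega^\bullet(A,\Pi)$ is closed under $d_A$: since $\alpha\in\Omega^1(A,\Pi)$, we get $d_A\alpha\in\Omega^2(A,\Pi)=\ker\Pi_2$. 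Now I would use Lemma \ref{image-pi-n-k} to write $P=\Pi_{n-k}(\gamma)=\iota_\gamma\Pi$ for some $\gamma\in\Gamma(\Lambda^{n-k}A^\ast)$, so that $\iota_{d_A\alpha}P=\iota_{d_A\alpha}\iota_\gamma\Pi$, and the pairing identities from Section \ref{prel} together with $d_A\alpha\in\ker\Pi_2$ force this to vanish (concretely, up to sign this is $\iota_{d_A\alpha\wedge\gamma}\Pi=\Pi_2(d_A\alpha)\lrcorner\,(\cdots)$, which is zero since $\Pi_2(d_A\alpha)=0$).

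The main obstacle I anticipate is the bookkeeping in the last step: one must match the contraction conventions correctly — the paper uses the sign $(-1)^{[i/2]}$ in $\langle\iota_\xi\phi,\eta\rangle=(-1)^{[i/2]}\langle\phi,\xi\wedge\eta\rangle$ — and verify that contracting $\Pi$ first against $\gamma$ and then against $d_A\alpha$ is, up to a sign, the same as contracting against $d_A\alpha\wedge\gamma$, so that the vanishing $\Pi_2(d_A\alpha)=0$ can be brought to bear. This is precisely the point where it is cleanest to pass to a regular point $x\in M$, choose the local form $\Pi\mid_U=X_1\wedge\cdots\wedge X_n$ guaranteed by Remark \ref{linearly_independent}, and do the computation in coordinates — which is why the statement refers to "proceeding as in the proof of Proposition $5.5$, \cite{illmp}". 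Everything else is a formal consequence of Lemma \ref{image-pi-n-k}, Proposition \ref{kernel-pik} and Proposition \ref{ialphadelnu}, so no genuinely new idea is needed beyond transporting the Nambu--Poisson argument through the isomorphism $\overline{\Pi}_{n-1}$ and the anchor.
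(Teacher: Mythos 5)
Your strategy is the one the paper intends: the paper's own ``proof'' merely defers to Proposition $5.5$ of \cite{illmp} together with Propositions \ref{kernel-pik} and \ref{ialphadelnu}, and your reduction (kill $\delta_\nu(\iota_\alpha P)$ by the definition of $\mathcal{V}^k(A,\Pi)$, kill $\iota_{d_A\alpha}P$ via the $d_A$-stability of $\Omega^\bullet(A,\Pi)$ and Proposition \ref{kernel-pik}) is correct in substance. The one step you must repair is the appeal to Proposition \ref{ialphadelnu}: as stated it requires $\alpha\in\Gamma(\Lambda^{k-1}A^\ast)$ when $P\in\Gamma(\Lambda^kA)$, whereas you apply it to a $1$-form $\alpha$, so for $k>2$ you are invoking an identity the paper has not actually proved (it is true, up to a sign that is irrelevant for your vanishing argument, but it needs its own justification, e.g.\ from $\flat_\nu(\iota_\alpha P)=\pm\,\alpha\wedge^{}\flat_\nu(P)$ and the Leibniz rule for $d_A$). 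Alternatively you can keep Proposition \ref{ialphadelnu} exactly as stated: since $\mathcal{V}^{k-1}(A,\Pi)$ is the space of sections of the subbundle $\Lambda^{k-1}\mathcal D$, it suffices to show $\iota_\beta\delta_\nu(P)=0$ for every $\beta\in\Omega^{k-1}(A,\Pi)=\ker\Pi_{k-1}$, and for such $\beta$ both $\iota_\beta P$ and $\iota_{d_A\beta}P$ vanish because $\beta\wedge\theta$ (for any $\theta\in\Gamma(A^\ast)$) and $d_A\beta$ lie in $\Omega^\bullet(A,\Pi)$ while $P$ is tangent to $\mathcal D$. Finally, the local-coordinate computation you anticipate at the end is unnecessary: once $\iota_{d_A\alpha}\Pi=0$ is known, $\iota_{d_A\alpha}\iota_\gamma\Pi=\pm\,\iota_\gamma\iota_{d_A\alpha}\Pi=0$ follows formally from the graded commutativity of contractions, with no need to pass to a regular point or to the isomorphism $\overline{\Pi}_{n-1}$.
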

Therefore, $(\mathcal{V}^{\bullet}(A,\Pi), \delta_{\nu})$ is a subcomplex of $(\Gamma(\Lambda^\bullet A),\delta_\nu)$ and the canonical Nambu homology modules of $A$, denoted by  $\mathcal{H}_\bullet^{\text{canN}}(A)$ are by definition the homology modules of this subcomplex.
\begin{remark}
Note that the canonical Nambu homology of $A$ does not depend on the choice of an orientation form. To see this, let $\nu$ and $\nu^\prime$ be two orientation form on $A$ representing the orientation. Then there exists a nowhere zero function $f\in C^\infty(M)$ such that
$\nu^\prime= f\nu$.\\
Define an isomorphism of $C^\infty(M)$ modules
\begin{gather*}
 \Phi^k\colon \mathcal{V}^k(A,\Pi)\longrightarrow \mathcal{V}^k(A,\Pi)\\
      P\mapsto {\frac{1}{f}} P \hspace*{1mm}\text{for all}\hspace*{1mm}k= 0,\ldots,n.
\end{gather*}
\noindent Then using definition of $\delta_\nu$ and $\delta_{\nu^\prime}$, it is straight forward to prove that
\[
 \delta_{\nu^\prime}\circ\Phi^k= \Phi^{k-1}\circ\delta_\nu
\]
\noindent Therefore, the mapping $\Phi^k$ induces an isomorphism between associated canonical Nambu homologies.
\end{remark}

Let $A$ be a Lie algebroid equipped with a regular Nambu structure $\Pi \in \Gamma (\Lambda^n A)$ of order $n,~ n \leq m =~\mbox{rank}~A$. Assume that $A$ is oriented with $\nu \in \Gamma (\Lambda^m A^\ast)$ being a chosen orientation form representing the orientation on $A$. Then we  have the following results.

\begin{prop}\label{subcomplex}
The graded vector space $\Pi_\bullet(\Gamma (\Lambda^\bullet A^\ast))= \oplus_{k=0}^n\Pi_k(\Gamma (\Lambda^k A^\ast))$ defines a subcomplex of the chain complex
$(\Gamma (\Lambda ^\bullet A),\delta_\nu)$ if and only if $\widetilde{\mathcal{M}}_\Pi^\nu\in \Pi_1(\Gamma ( A^\ast))$.
\end{prop}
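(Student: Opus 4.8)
The plan is to analyze $\delta_\nu$ degree by degree. Since $\Pi_k(\Gamma(\Lambda^{k}A^\ast))\subseteq\Gamma(\Lambda^{n-k}A)$, the degree $(n-k)$ component of $\Pi_\bullet(\Gamma(\Lambda^\bullet A^\ast))$ inside $(\Gamma(\Lambda^\bullet A),\delta_\nu)$ is exactly $\Pi_k(\Gamma(\Lambda^{k}A^\ast))$, and on the degree $0$ component $\Pi_n(\Gamma(\Lambda^n A^\ast))\subseteq C^\infty(M)$ the operator $\delta_\nu$ vanishes; hence $\Pi_\bullet(\Gamma(\Lambda^\bullet A^\ast))$ is a subcomplex if and only if $\delta_\nu(\Pi_k(\Gamma(\Lambda^{k}A^\ast)))\subseteq\Pi_{k+1}(\Gamma(\Lambda^{k+1}A^\ast))$ for $k=0,\dots,n-1$. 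The computational heart is the extension of Lemma \ref{formula-lie-derivative} from $(n-1)$-forms to arbitrary $k$-forms: for $\alpha\in\Gamma(\Lambda^{k}A^\ast)$,
$$\delta_\nu(\Pi_{k}(\alpha))=\delta_\nu(\iota_\alpha\Pi)=\varepsilon\,\iota_\alpha\widetilde{\mathcal{M}}_\Pi^\nu+\varepsilon'\,\iota_{d_A\alpha}\Pi=\varepsilon\,\iota_\alpha\widetilde{\mathcal{M}}_\Pi^\nu+\varepsilon'\,\Pi_{k+1}(d_A\alpha)$$
for signs $\varepsilon,\varepsilon'=\pm1$ (equal to $1$ and $(-1)^{n-1}$ when $k=n-1$, by Lemma \ref{formula-lie-derivative}). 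I would prove this exactly as Lemma \ref{formula-lie-derivative} was proved — reducing to $\alpha=f\,d_Af_1\wedge\cdots\wedge d_Af_k$ and using the graded Leibniz rules for $\iota$, $d_A$ and $\delta_\nu$ — after first recording the case $k=0$, namely $\delta_\nu\Pi=\widetilde{\mathcal{M}}_\Pi^\nu$; this special case follows from Lemma \ref{formula-lie-derivative} together with Proposition \ref{ialphadelnu} applied to $P=\Pi$, via the non-degeneracy of the pairing $\Gamma(\Lambda^{n-1}A^\ast)\times\Gamma(\Lambda^{n-1}A)\to C^\infty(M)$. Alternatively the whole identity can be obtained from Proposition \ref{ialphadelnu} with $P=\Pi$ by induction on $k$, writing $\alpha$ locally as a sum of decomposables.

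Granting this identity, necessity is immediate: if $\Pi_\bullet(\Gamma(\Lambda^\bullet A^\ast))$ is a subcomplex, then $\delta_\nu$ maps its top component $\Pi_0(\Gamma(\Lambda^{0}A^\ast))=C^\infty(M)\cdot\Pi$ into the component $\Pi_1(\Gamma(\Lambda^{1}A^\ast))=\Pi_1(\Gamma A^\ast)$, and evaluating on $1\in C^\infty(M)$ yields $\widetilde{\mathcal{M}}_\Pi^\nu=\delta_\nu\Pi\in\Pi_1(\Gamma A^\ast)$. For sufficiency, assume $\widetilde{\mathcal{M}}_\Pi^\nu\in\Pi_1(\Gamma A^\ast)$; by Remark \ref{orthogonal}(2) this says precisely that $\widetilde{\mathcal{M}}_\Pi^\nu$ is orthogonal to $\mathcal{D}$, i.e. $\iota_{u^\ast}\widetilde{\mathcal{M}}_\Pi^\nu=0$ for every $u^\ast\in\mathcal{D}^0$. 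Fix $k\in\{0,\dots,n-1\}$ and $\alpha\in\Gamma(\Lambda^{k}A^\ast)$. In the displayed identity the term $\varepsilon'\,\Pi_{k+1}(d_A\alpha)$ lies in $\Pi_{k+1}(\Gamma(\Lambda^{k+1}A^\ast))$ by definition, and the term $\varepsilon\,\iota_\alpha\widetilde{\mathcal{M}}_\Pi^\nu$ is an $(n-1-k)$-multisection of $A$ — of the correct degree $n-(k+1)$ — which is orthogonal to $\mathcal{D}$: for $u^\ast\in\mathcal{D}^0$, since contraction by the $1$-form $u^\ast$ commutes with $\iota_\alpha$ up to sign, $\iota_{u^\ast}(\iota_\alpha\widetilde{\mathcal{M}}_\Pi^\nu)=\pm\,\iota_\alpha(\iota_{u^\ast}\widetilde{\mathcal{M}}_\Pi^\nu)=0$. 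Hence, again by Remark \ref{orthogonal}(2) in degree $k+1$, $\iota_\alpha\widetilde{\mathcal{M}}_\Pi^\nu\in\Pi_{k+1}(\Gamma(\Lambda^{k+1}A^\ast))$, so $\delta_\nu(\Pi_{k}(\alpha))\in\Pi_{k+1}(\Gamma(\Lambda^{k+1}A^\ast))$. As $k$ and $\alpha$ were arbitrary, $\Pi_\bullet(\Gamma(\Lambda^\bullet A^\ast))$ is a subcomplex of $(\Gamma(\Lambda^\bullet A),\delta_\nu)$.

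The step I expect to be the main obstacle is the first one: establishing the degree-$k$ form of Lemma \ref{formula-lie-derivative}, together with the identity $\delta_\nu\Pi=\widetilde{\mathcal{M}}_\Pi^\nu$, and doing the sign bookkeeping honestly — by Proposition \ref{ialphadelnu} the operator $\delta_\nu$ is only a derivation "up to an $\iota_{d_A(\,\cdot\,)}$ correction", so the commutation rules $\iota_\theta\iota_\alpha=\pm\iota_\alpha\iota_\theta$ and the interaction of $\delta_\nu$ with $\iota_\alpha$ must be tracked with care. Once that identity is in hand, the stated equivalence is a purely formal consequence of it and of the description, in Remark \ref{orthogonal}, of the spaces $\Pi_k(\Gamma(\Lambda^{k}A^\ast))$ as the multisections of $A$ orthogonal to $\mathcal{D}$.
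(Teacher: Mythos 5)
Your proposal is correct and follows the same architecture as the paper's proof: both rest on the single identity $\delta_\nu(\Pi_k\beta)=\iota_\beta\widetilde{\mathcal{M}}_\Pi^\nu+(-1)^{n-1}\Pi_{k+1}(d_A\beta)$, from which necessity (set $k=0$, $\beta=1$, giving $\widetilde{\mathcal{M}}_\Pi^\nu=\delta_\nu\Pi$) and sufficiency both drop out; your unspecified signs $\varepsilon,\varepsilon'$ are harmless here since the target $\Pi_{k+1}(\Gamma(\Lambda^{k+1}A^\ast))$ is a linear subspace. The one place where your plan and the paper's diverge is how that identity is established. Your primary suggestion --- redo the local computation of Lemma \ref{formula-lie-derivative} for $\alpha=f\,d_Af_1\wedge\cdots\wedge d_Af_k$ --- does not transfer verbatim: for $k<n-1$ the quantity $\delta_\nu(\Pi_k\alpha)$ is a genuine multisection, not a divergence, so the equation $\mathcal{L}_{\Pi\alpha}\nu=(\cdots)\nu$ that drives that lemma is no longer available. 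The paper's actual device is the one you invoke only for the $k=0$ case: contract $\delta_\nu(\Pi_k\beta)$ with an arbitrary test form $\alpha\in\Gamma(\Lambda^{n-k-1}A^\ast)$, so that Proposition \ref{ialphadelnu} applies (its hypotheses require the contraction to land in degree one), reduce to $\delta_\nu$ of the section $\Pi_{n-1}(\beta\wedge\alpha)$ where Lemma \ref{formula-lie-derivative} does apply, expand $d_A(\beta\wedge\alpha)$, and cancel $\iota_\alpha$ by non-degeneracy of the pairing. Running that argument for all $k$ at once closes the only real gap in your sketch. On sufficiency your route via Remark \ref{orthogonal}(2) (orthogonality to $\mathcal{D}$ is preserved under contraction) is valid and slightly more roundabout than necessary: writing $\widetilde{\mathcal{M}}_\Pi^\nu=\iota_\theta\Pi$ gives $\iota_\beta\widetilde{\mathcal{M}}_\Pi^\nu=\pm\,\iota_{\theta\wedge\beta}\Pi=\pm\,\Pi_{k+1}(\theta\wedge\beta)$ directly.
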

\begin{proof} Let $\alpha \in \Gamma (\Lambda^{n-k-1}A^\ast)$ and $\beta \in \Gamma (\Lambda^k A^\ast)$ be arbitrary. Then, by Lemma (\ref{ialphadelnu})
\begin{align*}
 \iota_\alpha \delta_\nu(\Pi_k \beta)
&= \delta_\nu(\iota_\alpha \Pi_k(\beta))+ (-1)^{n-k}\iota_{d_A\alpha}(\Pi_k(\beta))\\
&= \delta_\nu(\iota_\alpha \iota_\beta\nu)+ (-1)^{n-k}\iota_{d_A\alpha}(\iota_\beta\nu)\\
&= \delta_\nu(\Pi_{n-1}(\beta\wedge\alpha))+ (-1)^{n-k}\Pi_n(\beta\wedge d_A\alpha)\\
&= \iota_\alpha(\iota_\beta\widetilde{\mathcal{M}}_\Pi^\nu+ (-1)^{n-1}\Pi_{k+1}(d_A\beta))
\end{align*}
\noindent Using injectivity of contraction map, we have
\[
 \delta_\nu(\Pi_k \beta)= \iota_\beta\widetilde{\mathcal{M}}_\Pi^\nu+ (-1)^{n-1}\Pi_{k+1}(d_A\beta).
\]
\noindent Hence, we conclude that $\delta_\nu(\Pi_k(\Gamma (\Lambda^k A^\ast)))\subseteq \Pi_{k+1}(\Gamma (\Lambda^{k+1}A^\ast))$ for all
$k=1,\ldots,n$ if and only if $\widetilde{\mathcal{M}}_\Pi^\nu \in \Pi_1(\Gamma (A^\ast))$.
\end{proof}
\begin{prop}
If $\Pi_\bullet(\Gamma (\Lambda^\bullet A^\ast)$ is a subcomplex of $(\Gamma (\Lambda ^\bullet A),\delta_\nu)$, then the homology of this complex is independent of the choice of the orientation form.
\end{prop}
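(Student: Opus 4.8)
The plan is to compare the complexes attached to two orientation forms by an explicit chain isomorphism given by multiplication by a function. Fix orientation forms $\nu$ and $\nu'$ representing the orientation of $A$, so that $\nu'=f\nu$ for some everywhere positive $f\in C^\infty(M)$. Two things must be established: (i) $\Pi_\bullet(\Gamma(\Lambda^\bullet A^\ast))$ is again a subcomplex of $(\Gamma(\Lambda^\bullet A),\delta_{\nu'})$, and (ii) the homology modules of the two subcomplexes coincide.

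For (i), Proposition~\ref{subcomplex} identifies the hypothesis of the statement with $\widetilde{\mathcal{M}}_\Pi^{\nu}\in\Pi_1(\Gamma(A^\ast))$ and reduces what must be proved to $\widetilde{\mathcal{M}}_\Pi^{\nu'}\in\Pi_1(\Gamma(A^\ast))$. The computation in the proof of Theorem~\ref{nambu-modular-class}(2) gives, for every $\alpha\in\Gamma(\Lambda^{n-1}A^\ast)$,
\[
\iota_\alpha\widetilde{\mathcal{M}}_\Pi^{\nu'}=\iota_\alpha\widetilde{\mathcal{M}}_\Pi^{\nu}+\frac1f\big((a\circ\Pi)(\alpha)\big)(f)=\iota_\alpha\widetilde{\mathcal{M}}_\Pi^{\nu}+\big(a(\Pi\alpha)\big)(\ln f),
\]
and $\big(a(\Pi\alpha)\big)(\ln f)=\langle d_A(\ln f),\Pi\alpha\rangle$. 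By the pairing identities recalled in Section~\ref{prel}, this last quantity equals, up to a sign depending only on $n$, $\iota_\alpha\big(\Pi_1(d_A\ln f)\big)$; since $\iota_\alpha$ is $C^\infty(M)$-linear and contraction against all of $\Gamma(\Lambda^{n-1}A^\ast)$ detects an $(n-1)$-multisection, this yields $\widetilde{\mathcal{M}}_\Pi^{\nu'}=\widetilde{\mathcal{M}}_\Pi^{\nu}\pm\Pi_1(d_A\ln f)$. As $\Pi_1(d_A\ln f)\in\Pi_1(\Gamma(A^\ast))$, we get $\widetilde{\mathcal{M}}_\Pi^{\nu'}\in\Pi_1(\Gamma(A^\ast))$, which proves (i).

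For (ii), take the map $\widetilde{\Phi}\colon\Gamma(\Lambda^j A)\to\Gamma(\Lambda^j A)$ defined by $P\mapsto\frac1f P$. The crucial elementary fact is $\iota_{\frac1f P}\nu'=\frac1f\,\iota_P(f\nu)=\iota_P\nu$, i.e. $\flat_{\nu'}\circ\widetilde{\Phi}=\flat_\nu$, hence also $\widetilde{\Phi}\circ\flat_\nu^{-1}=\flat_{\nu'}^{-1}$. Inserting these two identities into $\delta_\nu=\flat_\nu^{-1}\circ d_A\circ\flat_\nu$ and $\delta_{\nu'}=\flat_{\nu'}^{-1}\circ d_A\circ\flat_{\nu'}$ gives $\delta_{\nu'}\circ\widetilde{\Phi}=\flat_{\nu'}^{-1}\circ d_A\circ\flat_\nu=\widetilde{\Phi}\circ\delta_\nu$, so $\widetilde{\Phi}$ is an isomorphism of chain complexes from $(\Gamma(\Lambda^\bullet A),\delta_\nu)$ onto $(\Gamma(\Lambda^\bullet A),\delta_{\nu'})$. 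Each summand $\Pi_k(\Gamma(\Lambda^k A^\ast))$ is a $C^\infty(M)$-submodule of $\Gamma(\Lambda^{n-k}A)$, hence is preserved by $\widetilde{\Phi}$; together with (i), the restriction of $\widetilde{\Phi}$ is then an isomorphism of chain complexes between $(\Pi_\bullet(\Gamma(\Lambda^\bullet A^\ast)),\delta_\nu)$ and $(\Pi_\bullet(\Gamma(\Lambda^\bullet A^\ast)),\delta_{\nu'})$, and so induces an isomorphism on homology. This is the same device used in the remark establishing the orientation-independence of the canonical Nambu homology.

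I expect the only non-formal step to be (i): one has to know the transformation law of $\widetilde{\mathcal{M}}_\Pi^\nu$ under a positive rescaling of $\nu$ and recognize that the correction term is a multiple of $\Pi_1(d_A\ln f)$, so that it stays inside $\Pi_1(\Gamma(A^\ast))$ and the ``being a subcomplex'' condition is independent of $\nu$. After that, (ii) is automatic, since multiplication by $1/f$ visibly intertwines $\flat_\nu$ with $\flat_{\nu'}$ (and hence $\delta_\nu$ with $\delta_{\nu'}$) and preserves the $C^\infty(M)$-submodules $\Pi_k(\Gamma(\Lambda^k A^\ast))$ --- equivalently, by Remark~\ref{orthogonal}(2), the $(n-k)$-multisections orthogonal to $\mathcal D$, a condition in which $\nu$ plays no role.
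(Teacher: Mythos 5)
Your proof is correct and rests on exactly the same device as the paper's: the chain isomorphism $P\mapsto \tfrac{1}{f}P$, which satisfies $\delta_{\nu'}\circ\Psi^k=\Psi^{k-1}\circ\delta_\nu$ and preserves the $C^\infty(M)$-submodules $\Pi_k(\Gamma(\Lambda^k A^\ast))$. Your step (i), which invokes the transformation law of $\widetilde{\mathcal{M}}_\Pi^\nu$ to check that the subcomplex property persists for $\nu'$, is a harmless redundancy: the intertwining relation together with the $\Psi$-invariance of the submodules already transports the subcomplex property from $\delta_\nu$ to $\delta_{\nu'}$, so part (ii) alone suffices.
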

\begin{proof} Let $\nu^\prime$ be another choice of orientation form. Then, there exists a nowhere vanishing real valued function
$f\in C^\infty(M)$ so that $\nu^\prime= f\nu$. Without loss of generality, we may assume that $f>0$. Then,
\begin{gather*}
 \Psi^k \colon \Pi_k(\Omega^k(A)) \longrightarrow \Pi_k(\Omega^k(A))\hspace*{2mm}\text{defined by}
P\mapsto {\frac{1}{f}}P \hspace*{2mm}\text{is an isomorphism.}
\end{gather*}
\noindent Since, $\delta_{\nu^\prime} \circ \Psi^k= \Psi^{k-1}\circ \delta_\nu$, the result follows.
\end{proof}
\begin{prop}
If the modular class of $A$ is null, then the graded vector space $\Pi_\bullet(\Gamma (\Lambda^\bullet A^\ast)$ is a subcomplex of $(\Gamma (\Lambda ^\bullet A),\delta_\nu)$ and
\[
 \bar{\mathcal{H}}_k^{\text{canN}}(A)\cong \mathcal{H}^{n-k}_{\Pi}(A) \hspace*{1mm}\text{for all}\hspace*{1mm}
k=0,\ldots,n.
\]
\noindent where $\bar{\mathcal{H}}_k^{\text{canN}}(A)$ denotes the $k$-th homology of the subcomplex
$\Pi_\bullet(\Gamma (\Lambda^\bullet A^\ast)$.
\end{prop}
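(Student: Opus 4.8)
The plan is to derive both statements from Proposition~\ref{subcomplex}, Proposition~\ref{kernel-pik}, the identity obtained in the course of the proof of Proposition~\ref{subcomplex},
\[
\delta_\nu(\Pi_k\beta)= \iota_\beta\widetilde{\mathcal{M}}_\Pi^\nu + (-1)^{n-1}\Pi_{k+1}(d_A\beta),\qquad \beta\in\Gamma(\Lambda^k A^\ast),
\]
and the orientation-independence of the subcomplex homology established in the preceding proposition; the whole argument closely parallels the Nambu--Poisson case in \cite{illmp}. The first step is to use the hypothesis to replace $\nu$ by an orientation form that annihilates $\widetilde{\mathcal{M}}_\Pi^\nu$. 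Since $\mathcal{M}_\Pi=[\mathcal{M}^\nu_\Pi]=0$ in $\mathcal{H}L^1(\mathscr{A})$ and the $0$-cochains of the Leibniz complex of $\mathscr{A}$ with trivial coefficients are $C^\infty(M)$, there is $g\in C^\infty(M)$ with $\mathcal{M}^\nu_\Pi=d_{\mathscr{A}}g$. Set $\nu_0=e^{-g}\nu$, which represents the same orientation; by the change-of-orientation-form computation in the proof of Theorem~\ref{nambu-modular-class}(2) one gets $\mathcal{M}^{\nu_0}_\Pi=\mathcal{M}^\nu_\Pi-d_{\mathscr{A}}g=0$ as a cochain, so $\iota_\alpha\widetilde{\mathcal{M}}^{\nu_0}_\Pi=0$ for all $\alpha\in\Gamma(\Lambda^{n-1}A^\ast)$, whence $\widetilde{\mathcal{M}}^{\nu_0}_\Pi=0$ by non-degeneracy of the pairing. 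In particular $\widetilde{\mathcal{M}}^{\nu_0}_\Pi\in\Pi_1(\Gamma(A^\ast))$, so Proposition~\ref{subcomplex} gives that $\Pi_\bullet(\Gamma(\Lambda^\bullet A^\ast))$ is a subcomplex of $(\Gamma(\Lambda^\bullet A),\delta_{\nu_0})$. Moreover the same computation shows that for any two orientation forms $\nu'=f\nu$ the difference $\widetilde{\mathcal{M}}^{\nu'}_\Pi-\widetilde{\mathcal{M}}^{\nu}_\Pi$ lies in $\Pi_1(\Gamma(A^\ast))$ (it equals, up to sign, $\Pi_1(d_A\ln f)$); hence the criterion $\widetilde{\mathcal{M}}^{\nu}_\Pi\in\Pi_1(\Gamma(A^\ast))$ of Proposition~\ref{subcomplex} holds for every orientation form, which is the first assertion for the given $\nu$.

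For the isomorphism I compute $\bar{\mathcal{H}}_\bullet^{\text{canN}}(A)$ using $\nu_0$, which is legitimate by orientation-independence. With $\widetilde{\mathcal{M}}^{\nu_0}_\Pi=0$ the displayed identity collapses to $\delta_{\nu_0}(\Pi_k\beta)=(-1)^{n-1}\Pi_{k+1}(d_A\beta)$. By Proposition~\ref{kernel-pik} we have $\Omega^k(A,\Pi)=\ker\Pi_k$ for $1\le k\le n$, while $\Omega^0(A,\Pi)=0$ and $\Pi_0$ is injective because $\Pi$ is nowhere zero; therefore, for every $k$, the morphism $\bar{\Pi}_k$ is a $C^\infty(M)$-linear isomorphism
\[
\bar{\Pi}_k\colon \Omega^k_\Pi(A)=\Gamma(\Lambda^k A^\ast)/\Omega^k(A,\Pi)\ \xrightarrow{\ \sim\ }\ \Pi_k(\Gamma(\Lambda^k A^\ast)),
\]
and $\bar{\Pi}_{k+1}(\tilde d_A[\beta])=\Pi_{k+1}(d_A\beta)$. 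Thus $\delta_{\nu_0}\circ\bar{\Pi}_k=(-1)^{n-1}\,\bar{\Pi}_{k+1}\circ\tilde d_A$; replacing $\bar{\Pi}_k$ by $(-1)^{k(n-1)}\bar{\Pi}_k$ turns the family $(\bar{\Pi}_k)_k$ into a genuine isomorphism of complexes from $(\Omega^\bullet_\Pi(A),\tilde d_A)$ onto the subcomplex $(\Pi_\bullet(\Gamma(\Lambda^\bullet A^\ast)),\delta_{\nu_0})$. Finally, $\Pi_j(\Gamma(\Lambda^j A^\ast))\subseteq\Gamma(\Lambda^{n-j}A)$ occupies homological degree $n-j$ in $(\Gamma(\Lambda^\bullet A),\delta_\nu)$, so the degree-$k$ homology of the subcomplex is the degree-$(n-k)$ cohomology of $(\Omega^\bullet_\Pi(A),\tilde d_A)$, that is, $\bar{\mathcal{H}}_k^{\text{canN}}(A)\cong\mathcal{H}_\Pi^{n-k}(A)$ for $k=0,\ldots,n$.

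Once the orientation form has been chosen this is essentially a bookkeeping argument, and the only place the vanishing of the modular class is genuinely used is that initial reduction. (Keeping $\nu$ itself, one would instead obtain the twisted differential $[\beta]\mapsto[(-1)^{n-1}d_A\beta\pm d_Ag\wedge\beta]$ on $\Omega_\Pi^\bullet(A)$ and intertwine it with $\tilde d_A$ by multiplying by $e^{\lambda g}$ for a suitable scalar $\lambda$, but absorbing $g$ into the orientation form is cleaner.) The points that need a little care are: keeping apart the two gradings---the subscript on $\Pi_k$ versus the exterior-power degree, which differ by $n$---so as to land at $\mathcal{H}_\Pi^{n-k}$ rather than $\mathcal{H}_\Pi^{k}$; the well-definedness of $\bar{\Pi}_k$ on the quotient $\Omega^k_\Pi(A)$, which is exactly Proposition~\ref{kernel-pik} supplemented by regularity of $\Pi$ in degree $0$; and the sign normalization $(-1)^{k(n-1)}$. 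I do not anticipate any genuine obstacle beyond these.
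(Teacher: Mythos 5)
Your proof is correct and follows essentially the same route as the paper: use the vanishing of the modular class to produce a potential $g$ with $\mathcal{M}^\nu_\Pi=d_{\mathscr{A}}g$, invoke Proposition~\ref{subcomplex} for the subcomplex claim, and intertwine $\tilde d_A$ with $\delta$ via the maps $\Pi_{n-k}$ twisted by $e^{-g}$, using Proposition~\ref{kernel-pik} for well-definedness. The only (cosmetic) difference is that you absorb the factor $e^{-g}$ into the orientation form $\nu_0=e^{-g}\nu$ so that $\widetilde{\mathcal{M}}^{\nu_0}_\Pi=0$, whereas the paper keeps $\nu$ and puts the factor $e^{-f}$ directly into the chain maps $h_k$.
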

\begin{proof} Suppose the modular class of $A$ is null, then there exists $f \in C^\infty(M)$ such that $d_{\mathscr{A}}f = \mathcal{M}_\Pi^\nu.$ Therefore, for all $\alpha \in \Gamma (\Lambda^{n-1} A^\ast),$
$$
\iota_{\alpha}\widetilde{\mathcal{M}}_\Pi^\nu = \mathcal{M}_\Pi^\nu (\alpha) = (d_\mathscr{A}f)\alpha = ((a\circ\Pi_{n-1})(\alpha))f = (a(\iota_\alpha\Pi))f = (d_Af)(\iota_\alpha\Pi)$$
$$= \iota_{d_Af}(\iota_\alpha\Pi) = \iota_\alpha((-1)^{n-1}\iota_{d_Af}\Pi) = \iota_\alpha((-1)^{n-1}\Pi_1(d_Af)).$$
Thus,
$$\widetilde{\mathcal{M}}_\Pi^\nu= \Pi_1((-1)^{n-1}d_Af).$$
Proposition (\ref{subcomplex}) now impiles that $\Pi_\bullet(\Gamma (\Lambda^\bullet A^\ast)$ is a subcomplex of $(\Gamma (\Lambda ^\bullet A),\delta_\nu).$ Next using Proposition (\ref{kernel-pik}), we define isomorphisms of $C^\infty(M)$- modules
$$h_k \colon \Omega_{\Pi}^{n-k}(A)\longrightarrow \Pi_{n-k}(\Gamma (\Lambda^{n-k}A^\ast)),~~ h_k([\alpha]) = e^{-f}\Pi_{n-k}(\alpha)$$ such that $h_k\circ \tilde{d}_A = (-1)^{n-1}\delta_\nu \circ h_{k+1}$. Required isomorphism in cohomologies is induced by the isomorphisms $h_k$.  
\end{proof}
Finally using the above results and  Lemma (\ref{image-pi-n-k}) we obtain the following theorem.
\begin{thm}
If the modular class of $A$ is null then
\[
 \mathcal{H}_{\text{N}}^k(A) \cong \mathcal{H}_{\Pi}^k(A) \cong \mathcal{H}_{n-k}^\text{canN}(A)
\hspace*{2mm}\text{for all}\hspace*{2mm}k= 0,\ldots,n.
\]
\end{thm}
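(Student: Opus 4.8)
The plan is to obtain the theorem by assembling the results already established in this section; no new computation is required beyond reconciling the two incarnations of canonical Nambu homology that occur above. First I would invoke the Proposition asserting that the restriction maps $i^k$ induce an isomorphism of complexes $\pi^\bullet\colon(\Omega^\bullet_\Pi(A),\tilde d_A)\to(\Gamma(\Lambda^\bullet\mathcal D^\ast),d_D)$; this is exactly the statement $\mathcal{H}^k_{\mathrm N}(A)\cong\mathcal{H}^k_\Pi(A)$ for every $k$, and it uses only the regularity of $\Pi$ and the local-spanning hypothesis on $\Gamma A^\ast$, both standing assumptions of this section. Thus the first isomorphism of the theorem is in hand, and it remains to compare $\mathcal{H}^\bullet_\Pi(A)$ with $\mathcal{H}^{\mathrm{canN}}_\bullet(A)$.

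Next, since the modular class $\mathcal{M}_\Pi$ is assumed null, Proposition~\ref{subcomplex} shows that $\Pi_\bullet(\Gamma(\Lambda^\bullet A^\ast))=\bigoplus_{k=0}^n\Pi_k(\Gamma(\Lambda^k A^\ast))$ is a subcomplex of $(\Gamma(\Lambda^\bullet A),\delta_\nu)$, while the Proposition above whose hypothesis is the vanishing of the modular class (the one producing the isomorphisms $h_k$) gives $\bar{\mathcal{H}}^{\mathrm{canN}}_k(A)\cong\mathcal{H}^{n-k}_\Pi(A)$ for $k=0,\ldots,n$, where $\bar{\mathcal{H}}^{\mathrm{canN}}_\bullet$ denotes the homology of that subcomplex.

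It then remains to identify $\bar{\mathcal{H}}^{\mathrm{canN}}_\bullet(A)$ with $\mathcal{H}^{\mathrm{canN}}_\bullet(A)$, i.e.\ to check that the subcomplex $\Pi_\bullet(\Gamma(\Lambda^\bullet A^\ast))$ coincides term by term with $\mathcal{V}^\bullet(A,\Pi)$. For degrees $1\le k\le n$ this is precisely Lemma~\ref{image-pi-n-k}, which gives $\mathcal{V}^k(A,\Pi)=\Pi_{n-k}(\Gamma(\Lambda^{n-k}A^\ast))$; in degree $0$ one verifies directly that $\Pi_n(\Gamma(\Lambda^n A^\ast))=C^\infty(M)=\mathcal{V}^0(A,\Pi)$, using that $\Pi$ is nowhere vanishing, so that $\Pi_n\colon\Lambda^n A^\ast\to M\times\mathbb R$ is a surjective vector bundle map onto the trivial line bundle and hence surjective on sections (paracompactness providing a complementary subbundle). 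Hence $\mathcal{H}^{\mathrm{canN}}_k(A)=\bar{\mathcal{H}}^{\mathrm{canN}}_k(A)$ for all $k$, and chaining the identifications of the previous two paragraphs yields $\mathcal{H}^k_{\mathrm N}(A)\cong\mathcal{H}^k_\Pi(A)\cong\bar{\mathcal{H}}^{\mathrm{canN}}_{n-k}(A)=\mathcal{H}^{\mathrm{canN}}_{n-k}(A)$, which is the assertion.

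I expect the only genuinely delicate point to be this last edge-case bookkeeping: $\mathcal{V}^0(A,\Pi)$ and the degree-$n$ component $\Pi_n(\Gamma(\Lambda^n A^\ast))$ are introduced through different conventions, and Lemma~\ref{image-pi-n-k} is stated only for $k=1,\ldots,n$, so they must be matched by hand. Everything else is a purely formal consequence of Proposition~\ref{subcomplex}, Lemma~\ref{image-pi-n-k}, Proposition~\ref{kernel-pik} (invoked inside those arguments), the proposition identifying $\mathcal{H}^\bullet_{\mathrm N}(A)$ with $\mathcal{H}^\bullet_\Pi(A)$, and the proposition producing the isomorphisms $h_k$.
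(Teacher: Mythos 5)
Your proposal is correct and follows essentially the same route as the paper, which likewise obtains the theorem by chaining the isomorphism $\mathcal{H}^\bullet_{\mathrm N}(A)\cong\mathcal{H}^\bullet_\Pi(A)$, the proposition giving $\bar{\mathcal{H}}^{\mathrm{canN}}_k(A)\cong\mathcal{H}^{n-k}_\Pi(A)$ when the modular class vanishes, and the identification $\mathcal{V}^k(A,\Pi)=\Pi_{n-k}(\Gamma(\Lambda^{n-k}A^\ast))$. Your explicit check of the degree-zero matching $\Pi_n(\Gamma(\Lambda^nA^\ast))=C^\infty(M)=\mathcal{V}^0(A,\Pi)$ is a detail the paper leaves implicit, and it is handled correctly.
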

\section{density and the modular class} To define the modular class of a Lie algebroid $A$ equipped with a regular Nambu structure and to prove the duality theorem connecting Nambu cohomology and canonical nambu homology as developed in the previous sections we assumed that $A$ is orientable as a vector bundle. The aim of this last section is show that we can do away with the orientability assumption using the notion of density.

Recall that {\it orientation bundle} $O(A)$ of a smooth vector bundle $A$ over a smooth manifold $M$ is defined as follows \cite{bt}.
\begin{defn}
Let $A$ be a smooth vector bundle over a smooth manifold $M$. Let $\{(U_\alpha, \phi_\alpha)\}$ be a smooth atlas and $h_{\alpha\beta}$ be the corresponding transition functions for $A$. The
{\it{orientation bundle}} of $A$, denoted by $O(A)$, is the real line bundle over $M$ with transition functions $\text{sgn}(J(h_{\alpha\beta}))$, where  $\text{sgn}\colon\mathbb{R}\longrightarrow\mathbb{R}$ is given by
\[
\text{sgn}(x)= 
\begin{cases}
 1 & \hspace*{2mm}\text{for}\hspace*{1mm}x\geq 0,\\ 
 0 & \hspace*{2mm}\text{for}\hspace*{1mm}x= 0,\\
-1 & \hspace*{2mm}\text{for}\hspace*{1mm}x\leq 0
\end{cases}
\]
and $J(h_{\alpha\beta})$ is the Jacobian determinant of the matrix of partial derivatives of $h_{\alpha\beta}$.
\end{defn}  
\begin{defn}
The {\it{density bundle}}, denoted by $D(A)$, of $A$ is 
defined to be $\Lambda^{\text{top}}A^\ast \otimes O(A)$. A section of $D(A)$ is called a {\it{density}} on $A$. 
\end{defn}  
The following is an alternative way to introduce density bundle.
\begin{defn}
Let $V$ be an $n$-dimensional vector space. A density function of $V$ is a function 
\[
\mu\colon\underbrace{V\times \cdots\times V}_{n\hspace*{1mm}\text{-copies}}\longrightarrow \mathbb{R}
\]
\noindent satisfying the following condition: If $T\colon V\longrightarrow V$ is any linear map then
\[
 \mu(T(X_1),\ldots,T(X_n))= |\text{det}(T)|\mu(X_1,\ldots,X_n),~~X_i \in V
 \]
\end{defn}
\noindent Let $D(V)$ denote set of all densities on $V$.


\begin{defn}
A {\it{positive}} density on $V$ is a density $\mu$ such that $\mu(X_1,\ldots,X_n)>0$, for all
linearly independent vectors $X_1,\ldots,X_n\in V$.
\end{defn}

Then with the above notations the  density bundle $D(A)$ of $A$ may be described as
\[
 D(A):= \displaystyle{\prod_{x\in M}}D(A_x).
\]

\begin{defn} 
A smooth section of $\mu \in \Gamma(D(A))$ is called a {\it positive density} on $A$ if for all $x \in M$, $\mu_x$ is positive.
\end{defn}

Using a partition unity argument one can show that there always exists a smooth positive density on $A$. More precisely, we have the following lemma.
\begin{lemma}\label{positive density}
Let $A$ be a smooth vector bundle over a smooth manifold $M$. Then there exists a positive density on $A$.
\end{lemma}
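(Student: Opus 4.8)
The plan is to construct the density by a standard partition of unity argument, the key structural fact being that positive densities on a fixed vector space form a convex cone: any sum of positive densities is again positive, and any positive-scalar multiple of a positive density is positive. This is precisely the feature that distinguishes densities from top-degree forms and makes a gluing argument go through without cancellation worries.

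First I would choose an open cover $\{U_\alpha\}$ of $M$ by trivializing neighbourhoods of $A$, so that over each $U_\alpha$ there is a smooth local frame $e^\alpha_1, \ldots, e^\alpha_m$ of $A$, where $m = \operatorname{rank} A$. On $U_\alpha$ define a density $\mu_\alpha \in \Gamma(D(A)|_{U_\alpha})$ by requiring $\mu_\alpha(x)(e^\alpha_1(x), \ldots, e^\alpha_m(x)) = 1$ for all $x \in U_\alpha$; the transformation rule in the definition of a density function forces $\mu_\alpha(x)(X_1, \ldots, X_m) = |\det P|$ whenever $X_1, \ldots, X_m$ is obtained from the frame at $x$ by the matrix $P$, so $\mu_\alpha$ is smooth and is a positive density on $A|_{U_\alpha}$.

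Next, since $M$ is paracompact, choose a smooth locally finite partition of unity $\{\rho_\alpha\}$ subordinate to $\{U_\alpha\}$, with $\operatorname{supp}\rho_\alpha \subset U_\alpha$. Each product $\rho_\alpha \mu_\alpha$, extended by zero outside $U_\alpha$, is a globally defined smooth section of $D(A)$ (smoothness across $\partial U_\alpha$ follows from the support condition), and by local finiteness the sum $\mu := \sum_\alpha \rho_\alpha \mu_\alpha$ is a well-defined smooth section of $D(A)$.

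Finally I would verify positivity pointwise. Fix $x \in M$ and linearly independent vectors $X_1, \ldots, X_m \in A_x$. Then $\mu_x(X_1, \ldots, X_m) = \sum_\alpha \rho_\alpha(x)\, \mu_\alpha(x)(X_1, \ldots, X_m)$, a finite sum in which every term is $\geq 0$; moreover $\sum_\alpha \rho_\alpha(x) = 1$ forces $\rho_{\alpha_0}(x) > 0$ for some $\alpha_0$, and since $X_1, \ldots, X_m$ are linearly independent, $\mu_{\alpha_0}(x)(X_1, \ldots, X_m) > 0$. Hence $\mu_x(X_1, \ldots, X_m) > 0$, so $\mu$ is a positive density on $A$. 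There is no substantial obstacle here; the only points demanding a line of care are the smooth extension of $\rho_\alpha\mu_\alpha$ by zero and the use of local finiteness to make the sum legitimate.
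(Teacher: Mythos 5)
Your proposal is correct and is the same argument the paper gives, only written out in full: the paper's proof simply notes that the positive densities form an open, fibrewise-convex subset of $D(A)$ and invokes the usual partition of unity argument, which is exactly the gluing you carry out explicitly. No discrepancy to report.
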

\begin{proof} Note that the set of positive elements of $D(A)$ is an open subset whose intersection with each fibre is convex. Then the usual partition unity argument allows us to piece together local positive densities to obtain a global one. 
\end{proof}
Next we need to introduce {\it twisted Lie algebroid cochain complex}. The construction is similar to the construction of twisted de Rham complex of vector-valued differential forms on smooth manifolds and hence we mention it briefly (see \cite{bt} for details).

Let $A$ be a Lie algebroid and $E$ be a flat vector bundle on a smooth manifold $M$. Recall that a vector bundle $E$ is said to be flat if it admits an atlas $\{(U_\alpha, \phi_\alpha)\}$ relative to which the transition functions are locally constant. We may assume that the atlas $\{(U_\alpha, \phi_\alpha)\}$ on $A$ is induced from an atlas on $M$.  
Let $\alpha \in \Gamma (\Lambda^k A^\ast\otimes E)$ for all $k\in\mathbb{N}.$ Then locally, on an open set $U$, $\alpha= \displaystyle{\sum_i} e_i\otimes f_i,$ 
where $e_i\in\Gamma(U,\Lambda^k A^\ast)$ and $f_i\in\Gamma(U, E)$ and tensor product is over $C^\infty(U)$ and for any vector bundle $E$, $\Gamma(U, E)$ denotes the space of smooth sections of $E$ on $U$. Define $d_A^{{\hspace*{.5mm}\text{t}}}\colon  \Gamma (\Lambda^k A^\ast\otimes E)\longrightarrow \Gamma (\Lambda^{k+1} A^\ast\otimes E) $ locally as
\[
 d_A^{\hspace*{.5mm}\text{t}}\bigg(\displaystyle{\sum_i}e_i\otimes f_i\bigg)= \displaystyle{\sum_i}d_A(e_i)\otimes f_i,
\]
\noindent where $e_i\in\Gamma(U,\Lambda^k A^\ast)$ and $f_i\in\Gamma(U, E)$ and tensor product is over $C^\infty(U)$
for some open subset $U$ of $M$ and then extend $d_A^{\hspace*{.5mm}\text{t}}$ globally using
Leibniz rule and linearity to give a well defined map. Then $(\Gamma (\Lambda^\bullet A^\ast\otimes E), d_A^{\hspace*{.5mm}\text{t}})$ gives the twisted Lie algebroid cochain complex of $A$.
Next just like Definition (\ref{lie-L-i}), we have operators defined as follows.
\begin{defn}
Let $X\in\Gamma A$. Define $\mathcal{L}_X^{\hspace*{.5mm}\text{t}}\colon \Gamma (\Lambda^k A^\ast\otimes E) \longrightarrow  \Gamma (\Lambda^k A^\ast\otimes E)$ locally as
\[
 \mathcal{L}_X^{\hspace*{.5mm}\text{t}}\bigg(\displaystyle{\sum_i}e_i\otimes f_i\bigg)=
\displaystyle{\sum_i}\mathcal{L}_X(e_i)\otimes f_i.
\]
\noindent where $e_i\in\Gamma(U,\Lambda^k A^\ast)$ and $f_i\in\Gamma(U, E)$ and tensor product is over $C^\infty(U)$
for some open subset $U$ of $M$.\\
Similarly,  for $X\in\Gamma(\Lambda^i A),$ we may define $$\iota_X^{\hspace*{.5mm}\text{t}}\colon \Gamma (\Lambda^k A^\ast\otimes E)\longrightarrow  \Gamma (\Lambda^{k-i} A^\ast\otimes E).$$ 
\end{defn}
\begin{remark}
Notice that if $E$ is the trivial line bundle, $d_A^{\hspace*{.5mm}\text{t}},\mathcal{L}_X^{\hspace*{.5mm}\text{t}}$ and $\iota_X^{\hspace*{.5mm}\text{t}}$ are precisely
$d_A,\mathcal{L}_X$ and $\iota_X$, respectively. Also, note that the line bundle $O(A)$ is flat.
\end{remark}


\noindent Let $(A,[,]_A,a)$ be a Lie algebroid over a smooth manifold $M$. Let $\omega_x \in \Lambda^i A_x$
and $\tau_x \otimes {o}_x \in \Lambda^j A_x \otimes O(A)_x$, $x \in M$. Define $\omega_x \wedge^{\text{t}} (\tau_x
\otimes {o}_x):= (\omega_x\wedge\tau_x) \otimes o_x$. By extending it linearly, we have a well defined map
\[
 \wedge^\text{t} \colon \Gamma(\Lambda^i A^\ast)\times \Gamma (\Lambda^j A^\ast\otimes O(A)) \longrightarrow \Gamma(\Lambda^{i+j} A^\ast \otimes O(A)).
\]
\noindent The following lemma is easy to prove.
\begin{lemma}
Let $(A,[,]_A,a)$ be a Lie algebroid over a smooth manifold $M$. Let $X\in \Gamma (\Lambda^\bullet A)$. Then for $\omega\in \Gamma (\Lambda^\bullet A^\ast)$ and $\tau \otimes o\in  \Gamma (\Lambda^\bullet A^\ast\otimes O(A)) $, we have
\[
 \iota_X(\omega \wedge^{\text{t}} (\tau\otimes o))= 
\iota_X\omega \wedge^\text{t} (\tau\otimes o)+ (-1)^{\text{deg}(\omega)}\omega\wedge^\text{t}
\iota_X^{\hspace*{.5mm}\text{t}}(\tau\otimes o).
\]
\end{lemma}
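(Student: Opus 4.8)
The plan is to reduce the identity to the untwisted graded Leibniz rule for contraction recalled in Section~\ref{prel}, namely $\iota_X(\phi\wedge\psi)=\iota_X\phi\wedge\psi+(-1)^{\text{deg}(\phi)}\phi\wedge\iota_X\psi$, by exploiting that every twisted operation is defined locally by acting on the $\Lambda^\bullet A^\ast$-factor alone while leaving the $O(A)$-factor fixed. Here the operator $\iota_X$ on the left-hand side of the stated identity is to be read as the twisted contraction $\iota_X^{\hspace*{.5mm}\text{t}}$ on $\Gamma(\Lambda^\bullet A^\ast\otimes O(A))$.

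First I would work over a trivializing open set $U$ on which $O(A)$ admits a nowhere-vanishing section $o_0$ with locally constant transition functions; such $U$ exist because $O(A)$ is flat. Restricting to $U$, write $(\tau\otimes o)|_U=\tau'\otimes o_0$ for a suitable $\tau'\in\Gamma(U,\Lambda^\bullet A^\ast)$. Unwinding the definitions, $\iota_X^{\hspace*{.5mm}\text{t}}\big(\omega\wedge^{\text{t}}(\tau'\otimes o_0)\big)=\iota_X^{\hspace*{.5mm}\text{t}}\big((\omega\wedge\tau')\otimes o_0\big)=\big(\iota_X(\omega\wedge\tau')\big)\otimes o_0$, while $\iota_X\omega\wedge^{\text{t}}(\tau'\otimes o_0)=(\iota_X\omega\wedge\tau')\otimes o_0$ and $\omega\wedge^{\text{t}}\iota_X^{\hspace*{.5mm}\text{t}}(\tau'\otimes o_0)=(\omega\wedge\iota_X\tau')\otimes o_0$. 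Hence, over $U$, the asserted identity amounts to the untwisted relation $\iota_X(\omega\wedge\tau')=\iota_X\omega\wedge\tau'+(-1)^{\text{deg}(\omega)}\omega\wedge\iota_X\tau'$ tensored with $o_0$; the sign depends only on $\text{deg}(\omega)$ since the line-bundle factor contributes degree zero.

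For $X\in\Gamma A$ this untwisted relation is exactly the contraction formula of Section~\ref{prel}. For a general $X\in\Gamma(\Lambda^k A)$ I would induct on $k$: it suffices to check the formula for decomposable $X=X_1\wedge X'$ with $X'\in\Gamma(\Lambda^{k-1}A)$, for which $\iota_X=\iota_{X_1}\circ\iota_{X'}$ and $\iota_X^{\hspace*{.5mm}\text{t}}=\iota_{X_1}^{\hspace*{.5mm}\text{t}}\circ\iota_{X'}^{\hspace*{.5mm}\text{t}}$; applying the degree-one case to $X_1$ and the inductive hypothesis to $X'$, and collecting the signs $(-1)^{\text{deg}(\omega)}$, gives the result. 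Since $\iota_X^{\hspace*{.5mm}\text{t}}$ and $\wedge^{\text{t}}$ are globally well defined by construction, the local identities patch together to the stated global one. The only point needing attention — and it is routine bookkeeping rather than a genuine obstacle — is that the local computation does not depend on the chosen frame $o_0$; this is immediate because the transition functions of $O(A)$ are locally constant and therefore commute with $d_A$, $\mathcal{L}_X$ and $\iota_X$, which is precisely what makes the twisted operators well defined in the first place.
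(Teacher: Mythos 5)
The paper itself offers no proof of this lemma (it is dismissed as ``easy to prove''), so the only question is whether your argument is sound. The first part of it is: restricting to an open set over which $O(A)$ is trivialized by a locally constant frame $o_0$, observing that $\wedge^{\text{t}}$ and $\iota^{\hspace*{.5mm}\text{t}}$ act only on the $\Lambda^\bullet A^\ast$ factor, and thereby reducing the twisted identity to the untwisted contraction formula of Section 2 is exactly the right move, and your remark on independence of the chosen frame is correct. For $X\in\Gamma A$ this settles the lemma.

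The gap is the induction on the degree of $X$. For $X=X_1\wedge X'$ of degree $k\geq 2$ the operator $\iota_X=\iota_{X_1}\circ\iota_{X'}$ is a composition of odd derivations and is \emph{not} itself a signed derivation of the wedge product, so the asserted formula is false for higher-degree $X$ and no collection of signs rescues the inductive step. Concretely, applying the inductive hypothesis to $X'$ and then the degree-one rule to $X_1$ produces, besides the two wanted terms, the cross terms $(-1)^{\deg\omega-k+1}\,\iota_{X'}\omega\wedge^{\text{t}}\iota^{\hspace*{.5mm}\text{t}}_{X_1}(\tau\otimes o)+(-1)^{\deg\omega}\,\iota_{X_1}\omega\wedge^{\text{t}}\iota^{\hspace*{.5mm}\text{t}}_{X'}(\tau\otimes o)$, which do not cancel. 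A minimal counterexample: take $A$ of rank $2$ with local frame $e_1,e_2$ and dual frame $e^1,e^2$, and set $X=e_1\wedge e_2$, $\omega=e^1$, $\tau\otimes o=e^2\otimes o_0$; then the left-hand side is $\iota_{e_1}\iota_{e_2}(e^1\wedge e^2)\otimes o_0=-\,1\otimes o_0\neq 0$, while $\iota_X\omega=0$ and $\iota^{\hspace*{.5mm}\text{t}}_X(\tau\otimes o)=0$, so the right-hand side vanishes. The blame is partly the paper's, since the lemma is stated for arbitrary $X\in\Gamma(\Lambda^\bullet A)$; but the identity only holds for $X$ of degree one, which is the only case actually needed later (the contractions occurring there are by sections of the form $\Pi\alpha\in\Gamma A$). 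You should either restrict the statement to $X\in\Gamma A$ and drop the induction, or replace the right-hand side for decomposable higher-degree $X$ by the full shuffle-type expansion with one contraction distributed over each factor.
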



\begin{defn}
Let $A$ be a Lie algebroid over a smooth manifold $M$. Let $\mu$ be a fixed positive density on $A$. Then,
for any $X\in\Gamma A$, $\text{div}_\mu X\in C^\infty(M)$ is defined by the equation
\[
 \mathcal{L}_X^{\hspace*{.5mm}\text{t}}\mu= \hspace*{2mm}(\text{div}_\mu X) \mu.
\]
\end{defn}

Let $(A,[,]_{A},a)$ be a Lie algebroid over a smooth manifold $M$. Assume that $\Gamma (A^\ast)$ is locally spanned by elements of the form $d_Af,~~f \in C^\infty(M).$ Let $\Pi$ be a Nambu structure on $A$ order $n$. Fix a positive density $\mu$ on $A$. Consider the mapping
\[
\mathcal{M}^{\mu}\colon \underbrace{C^\infty(M)\times\cdots\times C^\infty(M)}_{n-1\hspace*{1mm}\text{times}}\longrightarrow
C^\infty(M)
\]
\noindent given by
\[
\mathcal{L}^{\hspace*{.5mm}\text{t}} _{\Pi(d_{A}f_1\wedge\cdots\wedge d_{A}f_{n-1})}\mu
= \mathcal{M}^{\mu}(f_1,\ldots,f_{n-1})\mu
\]
 \noindent where $f_1,\ldots,f_{n-1}\in C^\infty(M)$ for all $i=1,\ldots,n-1$. 
Then as before
$\mathcal{M}^{\mu}$ defines an $(n-1)$-multisection $\widetilde{\mathcal{M}}_\Pi^\mu \in\Gamma (\Lambda^{n-1}A),$ so that 
\[
 \mathcal{M}^\mu(f_1,\ldots,f_{n-1})=
\langle\widetilde{\mathcal{M}}_\Pi^\mu,d_{A}f_1\wedge\cdots\wedge d_{A}f_{n-1}\rangle.
\]
Then as in Section $3$, we have the following result.
\begin{thm}
(1) The mapping
$ \mathcal{M}_\Pi^\mu\colon\Gamma (\Lambda^{n-1}A)\longrightarrow C^\infty(M)$ given by
$\alpha\mapsto \iota_\alpha \widetilde{\mathcal{M}}_\Pi^\mu$
defines a $1$-cocycle in the cohomology complex associated to the Leibniz algebroid
$$(\mathscr{A}=\Lambda^{n-1}A^\ast,\ll,\gg_\ast,a\circ\Pi_{n-1}).$$
(2) The cohomology class $\mathcal{M}_\Pi= [\mathcal{M}_\Pi^\mu]\in \mathcal{H}L^1(\mathscr{A})$ is independent of the choice of $\mu$.
\end{thm}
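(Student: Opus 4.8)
The plan is to run the proof of Theorem~\ref{nambu-modular-class} again, essentially verbatim, replacing the orientation form $\nu$ by the positive density $\mu$ (which exists by Lemma~\ref{positive density}) and replacing the operators $\mathcal{L}_X,\iota_X,d_A$ acting on $\Gamma(\Lambda^\bullet A^\ast)$ by their twisted analogues $\mathcal{L}_X^{\hspace*{.5mm}\text{t}},\iota_X^{\hspace*{.5mm}\text{t}},d_A^{\hspace*{.5mm}\text{t}}$ acting on $\Gamma(\Lambda^\bullet A^\ast\otimes O(A))$. Since $O(A)$ is flat and the twisted operators are, by construction, given locally by the untwisted formulas on charts over which $O(A)$ admits a locally constant frame, all the calculus identities used in \S3 remain valid after twisting: the Cartan formula $\mathcal{L}_X^{\hspace*{.5mm}\text{t}}=d_A^{\hspace*{.5mm}\text{t}}\iota_X^{\hspace*{.5mm}\text{t}}+\iota_X^{\hspace*{.5mm}\text{t}}d_A^{\hspace*{.5mm}\text{t}}$, the Leibniz rule $\mathcal{L}_{fX}^{\hspace*{.5mm}\text{t}}=f\,\mathcal{L}_X^{\hspace*{.5mm}\text{t}}+d_Af\wedge^{\text{t}}\iota_X^{\hspace*{.5mm}\text{t}}$, the commutator identity $\mathcal{L}_{[X,Y]_A}^{\hspace*{.5mm}\text{t}}=\mathcal{L}_X^{\hspace*{.5mm}\text{t}}\mathcal{L}_Y^{\hspace*{.5mm}\text{t}}-\mathcal{L}_Y^{\hspace*{.5mm}\text{t}}\mathcal{L}_X^{\hspace*{.5mm}\text{t}}$, and the contraction rule for $\wedge^{\text{t}}$ stated above.

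The first step is to establish the twisted analogue of Lemma~\ref{formula-lie-derivative}: for every $\alpha\in\Gamma\mathscr{A}=\Gamma(\Lambda^{n-1}A^\ast)$,
\[
\mathcal{L}_{\Pi\alpha}^{\hspace*{.5mm}\text{t}}\mu=\big(\iota_\alpha\widetilde{\mathcal{M}}_\Pi^\mu+(-1)^{n-1}\iota_{d_A\alpha}\Pi\big)\mu .
\]
Just as in the orientable case it is enough to verify this for $\alpha=f\,d_Af_1\wedge\cdots\wedge d_Af_{n-1}$, and then the computation copies the proof of Lemma~\ref{formula-lie-derivative} line by line: one writes $\mathcal{L}_{\Pi\alpha}^{\hspace*{.5mm}\text{t}}\mu=f\,\mathcal{L}_{\Pi(d_Af_1\wedge\cdots\wedge d_Af_{n-1})}^{\hspace*{.5mm}\text{t}}\mu+d_Af\wedge^{\text{t}}\iota_{\Pi(d_Af_1\wedge\cdots\wedge d_Af_{n-1})}^{\hspace*{.5mm}\text{t}}\mu$, identifies the first term with $(\iota_\alpha\widetilde{\mathcal{M}}_\Pi^\mu)\mu$ using the defining relation of $\widetilde{\mathcal{M}}_\Pi^\mu$, and treats the second term exactly as in the derivation of $(\ref{l-pi-3})$, using that $d_Af\wedge^{\text{t}}\mu=0$ (it is a section of $\Lambda^{m+1}A^\ast\otimes O(A)=0$) together with the $\wedge^{\text{t}}$-contraction rule; this produces the $(-1)^{n-1}\iota_{d_A\alpha}\Pi$ summand.

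Granting this identity, part~(1) is obtained by transcribing the chain of equalities in the proof of Theorem~\ref{nambu-modular-class}(1). Expanding $(d_{\mathscr{A}}\mathcal{M}_\Pi^\mu)(\alpha,\beta)$ reduces the cocycle condition to the identity of functions $\iota_{\ll\alpha,\beta\gg_\ast}\widetilde{\mathcal{M}}_\Pi^\mu=((a\circ\Pi)(\alpha))\iota_\beta\widetilde{\mathcal{M}}_\Pi^\mu-((a\circ\Pi)(\beta))\iota_\alpha\widetilde{\mathcal{M}}_\Pi^\mu$; to prove it one computes $(\iota_{\ll\alpha,\beta\gg_\ast}\widetilde{\mathcal{M}}_\Pi^\mu)\mu$ by substituting $\Pi\ll\alpha,\beta\gg_\ast=[\Pi\alpha,\Pi\beta]_A$ (Corollary~\ref{morphism-leibalgd}), applying the twisted commutator identity, then the twisted version of Lemma~\ref{formula-lie-derivative} from the previous step, and finally Equation~$(\ref{crucial})$ — which concerns only functions and is therefore unaffected by the twisting. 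After the same rearranging and cancellation as in \S3 the two sides coincide, and since $\mu$ vanishes nowhere this gives $d_{\mathscr{A}}\mathcal{M}_\Pi^\mu=0$.

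For part~(2), let $\mu'$ be another positive density on $A$. Since $D(A)$ is a line bundle and both densities are positive, there is a unique nowhere vanishing $u\in C^\infty(M)$ with $u>0$ everywhere and $\mu'=u\mu$. Evaluating $\mathcal{L}_{\Pi\alpha}^{\hspace*{.5mm}\text{t}}\mu'$ in two ways — via the twisted identity applied to $\mu'$, and via $\mathcal{L}_{\Pi\alpha}^{\hspace*{.5mm}\text{t}}(u\mu)=u\,\mathcal{L}_{\Pi\alpha}^{\hspace*{.5mm}\text{t}}\mu+((a\circ\Pi)(\alpha)u)\mu$ — yields $\iota_\alpha\widetilde{\mathcal{M}}_\Pi^{\mu'}=\iota_\alpha\widetilde{\mathcal{M}}_\Pi^\mu+\frac{1}{u}((a\circ\Pi)(\alpha)u)$, exactly as in Theorem~\ref{nambu-modular-class}(2); since $\frac{1}{u}((a\circ\Pi)(\alpha)u)=((a\circ\Pi)(\alpha))(\ln u)=d_{\mathscr{A}}(\ln u)(\alpha)$ this reads $\mathcal{M}_\Pi^{\mu'}=\mathcal{M}_\Pi^\mu+d_{\mathscr{A}}(\ln u)$, so $\mathcal{M}_\Pi=[\mathcal{M}_\Pi^\mu]\in\mathcal{H}L^1(\mathscr{A})$ is independent of $\mu$. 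The only real difference from \S3 is the passage to the twisted calculus, and the main — rather minor — obstacle is to confirm that each twisted identity invoked above, especially the Cartan formula and the Leibniz rule for $\mathcal{L}^{\hspace*{.5mm}\text{t}}$ over $C^\infty(M)$, really does follow from its untwisted counterpart by this local-frame argument; once that is in place the computations of \S3 carry over mechanically.
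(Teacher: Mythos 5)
Your proposal is correct and is exactly the argument the paper intends: the paper itself gives no separate proof here, stating only ``as in Section $3$, we have the following result,'' and your transcription of the \S3 computations into the twisted calculus on $\Gamma(\Lambda^\bullet A^\ast\otimes O(A))$ (using flatness of $O(A)$ to transport the Cartan formula, Leibniz rule, and commutator identity, and noting that Equation~(3.5) involves only functions) is precisely that intended verbatim carry-over.
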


\begin{defn}
The cohomology class
$[\mathcal{M}_\Pi]\in \mathcal{H}L^1(\mathscr{A})$ as defined above is called the {\it{the modular class}} of $A$.
\end{defn}

In order to prove a version of duality theorem in the present context, we need to have a version of canonical Nambu homology. To this end, we prove the following result.
\begin{prop}
Let $A$ be a Lie algebroid with a regular Numbu structure $\Pi$ of order $n \leq m = rank~ A$ over a smooth manifold $M$. Let $\mu$ be a positive density on $A$. 
Let $\flat_\mu\colon \Gamma (\Lambda^k A)\longrightarrow \Gamma (\Lambda^{m-k} A^\ast \otimes O(A))$ be defined as $\flat\mu(P)= \iota_P^{\hspace*{.5mm}\text{t}}\mu$ for
all $P\in\Gamma (\Lambda ^k A)$. Then $\flat_\mu$ is an isomorphism.
\end{prop}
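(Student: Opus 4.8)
The plan is to show that $\flat_\mu$ is a vector bundle morphism which is fibrewise an isomorphism, and then conclude it is a global isomorphism. The key observation is that $\flat_\mu$ is $C^\infty(M)$-linear and covers the identity on $M$, so it suffices to check that the induced map on each fibre $\flat_{\mu,x}\colon \Lambda^k A_x \longrightarrow \Lambda^{m-k}A_x^\ast \otimes O(A)_x$ is an isomorphism of (finite-dimensional) vector spaces. Since $\dim \Lambda^k A_x = \binom{m}{k} = \binom{m}{m-k} = \dim(\Lambda^{m-k}A_x^\ast\otimes O(A)_x)$, it is enough to check injectivity fibrewise.

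\textbf{Local trivialization.} First I would work in a trivializing neighbourhood $U$ of a point $x\in M$ over which $A$ admits a local frame $X_1,\ldots,X_m$ and $O(A)$ admits a nowhere-vanishing section $o$. Write $\mu|_U = g\,(X_1^\ast\wedge\cdots\wedge X_m^\ast)\otimes o$ for a nowhere-vanishing positive function $g$ (this uses that $\mu$ is a positive density, so its value at each point is a nonzero element of the one-dimensional space $D(A_x)$). For a basis multivector $P = X_{i_1}\wedge\cdots\wedge X_{i_k}$ one computes, using the contraction identities recalled in Section~\ref{prel} together with the definition of $\wedge^{\text{t}}$ and $\iota_X^{\hspace*{.5mm}\text{t}}$, that $\flat_\mu(P) = \iota_P^{\hspace*{.5mm}\text{t}}\mu$ equals $\pm g\,(X_{j_1}^\ast\wedge\cdots\wedge X_{j_{m-k}}^\ast)\otimes o$, where $\{j_1,\ldots,j_{m-k}\}$ is the complementary index set. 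Thus in these bases $\flat_\mu$ is, up to the nonvanishing factor $g$ and signs, the standard ``complementary-index'' pairing between $\Lambda^k A_x$ and $\Lambda^{m-k}A_x^\ast$, which is a bijection on basis elements and hence a linear isomorphism.

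\textbf{Globalization.} Having shown $\flat_\mu$ is fibrewise bijective and $C^\infty(M)$-linear covering $\mathrm{id}_M$, I would invoke the standard fact that such a bundle map has a smooth inverse (the inverse is constructed locally by the above formula, with $g$ replaced by $1/g$, and the local inverses agree on overlaps by uniqueness, hence patch to a global smooth bundle map $\flat_\mu^{-1}$). Therefore $\flat_\mu$ is an isomorphism of vector bundles, and consequently an isomorphism $\Gamma(\Lambda^k A)\longrightarrow \Gamma(\Lambda^{m-k}A^\ast\otimes O(A))$ on sections.

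\textbf{Main obstacle.} The routine part is the sign bookkeeping in the contraction formula for $\iota_P^{\hspace*{.5mm}\text{t}}\mu$ on decomposable multivectors; these signs do not affect bijectivity but must be handled carefully if one later wants an explicit formula for $\delta_\mu := \flat_\mu^{-1}\circ d_A^{\hspace*{.5mm}\text{t}}\circ \flat_\mu$. The only genuinely conceptual point is that one is contracting an ordinary multivector $P\in\Gamma(\Lambda^k A)$ against an $O(A)$-twisted form; the Leibniz-type rule for $\iota_X$ acting on $\omega\wedge^{\text{t}}(\tau\otimes o)$ recalled just above the statement is exactly what makes the local computation go through, so no new input beyond the material already developed is needed. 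The regularity of $\Pi$ and the spanning hypothesis on $\Gamma(A^\ast)$ play no role here and are not used in this proposition.
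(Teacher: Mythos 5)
Your proposal is correct and follows essentially the same route as the paper: both arguments reduce to fibrewise injectivity via the equality of ranks $\binom{m}{k}=\binom{m}{m-k}$ and then verify injectivity by expanding a nonzero $P(x)$ in a local frame and pairing $\iota_{P(x)}^{\text{t}}\mu(x)$ against the complementary basis multivectors, obtaining a nonzero multiple of $\mu(x)$. Your added remarks on the smoothness of the inverse and on the irrelevance of the regularity and spanning hypotheses are accurate but do not change the substance of the argument.
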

\begin{proof}
For dimension reasons, it is enough to prove that $\flat_\mu$ is injective. Let $P\in\Gamma (\Lambda^k A)$ be non-zero. Then $P(x)\neq 0$  for some $x\in M.$
We know $\mu(x)= X_1^\ast(x)\wedge\cdots\wedge X_m^\ast(x)\otimes \mu(x)$ where $X_i^\ast(x)\in A^\ast_x$ and $\mu(x)\in O(A)_x$ is non-zero. Write 
$$P(x)=
\displaystyle{\sum_{1\leq i_1<\cdots<i_k\leq m}}\alpha_{i_1\cdots i_k}X_{i_1}(x)\wedge\cdots\wedge X_{i_k}(x).$$
Without loss of generality, we may assume that $\alpha_{i_1\cdots i_k}\neq 0$ for all tuple $(i_1,\ldots,i_k)$ appearing in
the expression above. Consider 
$$A_{i_1\cdots i_k}(x):= 
X_1(x)\wedge\cdots\wedge\widehat{X_{i_1}}(x)\wedge\cdots\wedge\widehat{X_{i_k}}(x)\wedge\cdots\wedge X_m(x)\in
\Lambda^{m-k}A_x.$$
Then $\iota_{P(x)}^{\hspace*{.5mm}\text{t}}\mu(x)(A_{i_1\cdots i_k}(x))= (-1)^\alpha\alpha_{i_1\cdots i_k}\mu(x)$ for some $\alpha\in\mathbb{Z}$. Hence we conclude that $\flat_\mu(P)\neq 0$. In other words,  $\flat_\mu$ is injective.
\end{proof}

Notice that  $\flat_\mu$ is $C^\infty(M)$-linear and hence yields a vector bundle isomorphism  
$$\flat_\mu \colon \Lambda^k A \longrightarrow \Lambda^{m-k} A^\ast\otimes O(A).$$

Now we use the above isomorphisms and proceed as in Section $4$, to define an operator
\[
 \delta_\mu:= \flat_\mu^{-1}\circ d^{\hspace*{.5mm}\text{t}}_{A}\circ \flat_\mu\colon  \Gamma (\Lambda^k A)\longrightarrow \Gamma (\Lambda^{k-1} A)
\]
to deduce a homology complex $(\Gamma^\bullet A, \delta_\mu)$ from the twisted Lie algebroid cochain complex $(\Gamma(\Lambda^\bullet A^\ast \otimes O(A)), d_A^{\hspace*{.5mm}\text{t}}).$ Define  
\[
 \mathcal{V}_{\hspace*{.5mm}\text{t}}^k(A,\Pi)= \{P\in\Gamma (\Lambda^k A)|~\iota^{\hspace*{.5mm}\text{t}}_\alpha P=0\hspace*{1mm}\text{for all}\hspace*{1mm}\alpha\in
\Gamma (A^\ast),\alpha\in\hspace*{1mm}\text{ker}~\Pi_1\},~k\geq 1
\]
\noindent Set $\mathcal{V}_{\hspace*{.5mm}\text{t}}^0(A,\Pi)= C^\infty(M)$. Then one can verify that 
$$\delta_\mu(\mathcal{V}_{\hspace*{.5mm}\text{t}}^k(A,\Pi)) \subseteq \mathcal{V}_{\hspace*{.5mm}\text{t}}^{k-1}(A,\Pi)~~\mbox{for all}~~k,$$
yielding a subcomplex $(\mathcal{V}_{\hspace*{.5mm}\text{t}}^\bullet(A, \Pi), \delta_\mu)$ of $(\Gamma^\bullet A, \delta_\mu)$.
The canonical Nambu homology modules in the present context are defined by the above subcomplex.

We may now recover all the results in Section $4$ and their proofs can be written verbatim with the tools as developed above and the duality theorem in the present context may be deduced in the following form. 

\begin{thm}
Let $(A,[,]_{A},a)$ be a Lie algebroid over a smooth manifold $M$ with a regular Nambu structure $\Pi$ of order $n,~3\leq n\leq m = rank~A$. Assume that $\Gamma A^\ast$ is generated by elements of the form $d_Af$ where $f\in C^\infty(M)$. If the modular class of $A$ is null then
\[
 \mathcal{H}_{\text{N}}^k(A) \cong \mathcal{H}_{\Pi}^k(A) \cong \mathcal{H}_{n-k}^\text{canN}(A)
\hspace*{2mm}\text{for all}\hspace*{2mm}k= 0,\ldots,n.
\]
\end{thm}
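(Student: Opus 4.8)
The plan is to transcribe, almost verbatim, the entire development of Section~$4$, systematically replacing the orientation form $\nu$ by a fixed positive density $\mu$ (which exists by Lemma~\ref{positive density}), the operators $d_A,\mathcal{L}_X,\iota_X$ by their twisted counterparts $d_A^{\hspace*{.5mm}\text{t}},\mathcal{L}_X^{\hspace*{.5mm}\text{t}},\iota_X^{\hspace*{.5mm}\text{t}}$, the isomorphism $\flat_\nu$ by $\flat_\mu$, and $\delta_\nu$ by $\delta_\mu$. The identification $\mathcal{H}_{\text{N}}^k(A)\cong\mathcal{H}_\Pi^k(A)$ is the isomorphism of complexes $\pi^\bullet$ of Section~$4$ and uses no orientability whatsoever, so only the isomorphism $\mathcal{H}_\Pi^k(A)\cong\mathcal{H}_{n-k}^{\text{canN}}(A)$ requires attention.

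First I would record the two ``analytic'' inputs in twisted form. The twisted analogue of Lemma~\ref{formula-lie-derivative} states that $\mathcal{L}_{\Pi\alpha}^{\hspace*{.5mm}\text{t}}\mu=\big(\iota_\alpha\widetilde{\mathcal{M}}_\Pi^\mu+(-1)^{n-1}\iota_{d_A\alpha}\Pi\big)\mu$ for all $\alpha\in\Gamma(\Lambda^{n-1}A^\ast)$; its proof is the same computation as in Section~$3$, the new ingredients being the Leibniz rule for $\iota_X$ with respect to $\wedge^{\text{t}}$ and the identity $d_Af\wedge^{\text{t}}\mu=0$. The twisted analogue of Proposition~\ref{ialphadelnu} reads $\iota_\alpha(\delta_\mu(P))=\delta_\mu(\iota_\alpha P)+(-1)^k\iota_{d_A\alpha}P$ and follows from the twisted Cartan calculus together with $\delta_\mu(X)=\text{div}_\mu X$ for $X\in\Gamma A$, the latter being the twisted version of Equation~(\ref{divergence}). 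Since the push-forward Nambu--Poisson structure $\Lambda=a\circ\Pi$ lives on $TM$ and is untouched by the $O(A)$-twist, Lemma~\ref{3.3} and Equation~(\ref{crucial}) are literally unchanged, so the twisted analogue of Theorem~\ref{nambu-modular-class} (already recorded in Section~$5$) holds and the modular class $[\mathcal{M}_\Pi^\mu]\in\mathcal{H}L^1(\mathscr{A})$ is well defined and $\mu$-independent.

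With these in hand, the argument of Proposition~\ref{subcomplex} transcribes to give $\delta_\mu(\Pi_k\beta)=\iota_\beta\widetilde{\mathcal{M}}_\Pi^\mu+(-1)^{n-1}\Pi_{k+1}(d_A\beta)$, so $\Pi_\bullet(\Gamma(\Lambda^\bullet A^\ast))$ is a subcomplex of $(\Gamma(\Lambda^\bullet A),\delta_\mu)$ precisely when $\widetilde{\mathcal{M}}_\Pi^\mu\in\Pi_1(\Gamma(A^\ast))$, its homology being $\mu$-independent by the usual rescaling isomorphism. Now assume the modular class is null, say $d_{\mathscr{A}}f=\mathcal{M}_\Pi^\mu$. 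The purely formal computation of Section~$4$, unwinding the definitions of $d_{\mathscr{A}}$, $\Pi_{n-1}$ and $\iota_{d_Af}$, yields $\widetilde{\mathcal{M}}_\Pi^\mu=\Pi_1((-1)^{n-1}d_Af)$, hence $\Pi_\bullet(\Gamma(\Lambda^\bullet A^\ast))$ is a subcomplex of $(\Gamma(\Lambda^\bullet A),\delta_\mu)$. Using Proposition~\ref{kernel-pik}, which is purely algebraic and needs no orientability, I would then define the $C^\infty(M)$-module isomorphisms $h_k\colon\Omega_\Pi^{n-k}(A)\to\Pi_{n-k}(\Gamma(\Lambda^{n-k}A^\ast))$ by $h_k([\alpha])=e^{-f}\Pi_{n-k}(\alpha)$ and verify $h_k\circ\tilde{d}_A=(-1)^{n-1}\delta_\mu\circ h_{k+1}$, which induces an isomorphism $\mathcal{H}_\Pi^{n-k}(A)\cong\bar{\mathcal{H}}_k^{\text{canN}}(A)$, where the right-hand side is the $k$-th homology of the subcomplex $\Pi_\bullet(\Gamma(\Lambda^\bullet A^\ast))$. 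Finally, since the defining condition of $\mathcal{V}_{\hspace*{.5mm}\text{t}}^k(A,\Pi)$ involves neither $\mu$ nor any twist, it coincides with $\mathcal{V}^k(A,\Pi)$, so Lemma~\ref{image-pi-n-k} gives $\mathcal{V}_{\hspace*{.5mm}\text{t}}^k(A,\Pi)=\Pi_{n-k}(\Gamma(\Lambda^{n-k}A^\ast))$; thus the subcomplex $\Pi_\bullet(\Gamma(\Lambda^\bullet A^\ast))$ is exactly $(\mathcal{V}_{\hspace*{.5mm}\text{t}}^\bullet(A,\Pi),\delta_\mu)$ and $\bar{\mathcal{H}}_k^{\text{canN}}(A)=\mathcal{H}_k^{\text{canN}}(A)$. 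Chaining the three isomorphisms gives the claim.

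The main obstacle is bookkeeping rather than conceptual: one must check that every sign and every Leibniz/Cartan identity of the de~Rham-type calculus of Definition~\ref{lie-L-i} survives the passage to the $O(A)$-twisted complex, in particular that $\mathcal{L}_X^{\hspace*{.5mm}\text{t}}=d_A^{\hspace*{.5mm}\text{t}}\circ\iota_X^{\hspace*{.5mm}\text{t}}+\iota_X^{\hspace*{.5mm}\text{t}}\circ d_A^{\hspace*{.5mm}\text{t}}$ and $\delta_\mu^2=0$. This is immediate since $O(A)$ is flat with locally constant transition functions, so every operation is defined by patching local formulas identical to the untwisted ones; once this is observed, each proof of Section~$4$ transcribes without change.
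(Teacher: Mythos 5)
Your proposal is correct and follows exactly the route the paper itself takes: the paper's entire ``proof'' of this theorem is the remark that the results of Section~4 and their proofs transcribe verbatim once $\nu$, $d_A$, $\mathcal{L}_X$, $\iota_X$, $\flat_\nu$, $\delta_\nu$ are replaced by $\mu$, $d_A^{\hspace*{.5mm}\text{t}}$, $\mathcal{L}_X^{\hspace*{.5mm}\text{t}}$, $\iota_X^{\hspace*{.5mm}\text{t}}$, $\flat_\mu$, $\delta_\mu$, which is precisely your plan. You in fact supply more detail than the paper does (e.g.\ isolating the twisted analogues of Lemma~\ref{formula-lie-derivative} and Proposition~\ref{ialphadelnu} and noting that Lemma~\ref{3.3} and Equation~(\ref{crucial}) are untouched by the twist), so nothing further is needed.
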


{\bf Acknowledgement:} Second author would like to thank Mahuya Datta for clearing some initial doubts while learning the subject.

\newpage 
\mbox{ }\\

\providecommand{\bysame}{\leavevmode\hbox to3em{\hrulefill}\thinspace}
\providecommand{\MR}{\relax\ifhmode\unskip\space\fi MR }
\providecommand{\MRhref}[2]{%
  \href{http://www.ams.org/mathscinet-getitem?mr=#1}{#2}
}
\providecommand{\href}[2]{#2}

\end{document}